\tikzset{
	LabelStyle/.style = { rectangle, rounded corners, draw,
		minimum width = 2em, fill = yellow!50,
		text = red, font = \bfseries }}
\setlist[enumerate]{label=\textbf{(\roman*)}}
\newtheorem{thm}{Theorem}[section]
\newtheorem{defn}{Definition}[section]
\newtheorem{rem}{Remark}[section]
\newtheorem{prop}{Proposition}[section]
\newtheorem{lem}{Lemma}[section]
\newtheorem{cor}{Corollary}[section]
\theoremstyle{definition}
\renewcommand{\set}[2]{\{\ #1 \ | \ #2 \ \}}
\renewcommand{\Set}[2]{\left\{\ #1 \ | \ #2 \ \right\}}
\renewcommand{\setminus}{\smallsetminus}
\newcommand{\R}{\mathbb{R}}
\newcommand{\N}{\mathbb{N}}
\newcommand{\Z}{\mathbb{Z}}
\newcommand{\e}{\mathrm{e}}
\renewcommand{\d}{\mathrm{d}}
\newcommand{\stack}[2]{\genfrac{}{}{0pt}{}{#1}{#2}}
\newcommand{\ra}[1]{\accentset{\rightharpoonup}{#1}}
\newcommand{\capsra}[1]{\overset{\rightharpoonup}{\vphantom{a}\smash{#1}}}
\newcommand{\capsla}[1]{\overset{\leftharpoonup}{\vphantom{a}\smash{#1}}}
\newcommand{\leftrightharpoonup}{%
	\mathrel{\mathpalette\lrhup\relax}%
}
\newcommand{\lrhup}[2]{%
	\ooalign{$#1\leftharpoonup$\cr$#1\rightharpoonup$\cr}%
}
\newcommand{\capslra}[1]{\overset{\leftrightharpoonup}{\vphantom{a}\smash{#1}}}
\let\onto\twoheadrightarrow
\newcommand{\alphset}{\mathscr{A}}
\newcommand{\alphsetra}{\capsra{\mathscr{A}}}
\newcommand{\alphsetla}{\capsla{\mathscr{A}}}
\newcommand{\stateset}{\mathscr{I}}
\newcommand{\statesetra}{\capsra{\mathscr{I}}}
\newcommand{\edgeset}{\mathscr{E}}
\newcommand{\edgesetra}{\capsra{\mathscr{E}}}
\newcommand{\causset}{\mathscr{C}}
\newcommand{\rvalph}{\mathrm{A}}
\newcommand{\rvalphra}{\capsra{\mathrm{A}}}
\newcommand{\rvalphla}{\capsla{\mathrm{A}}}
\newcommand{\rvalphlra}{\capslra{\mathrm{A}}}
\newcommand{\rvstate}{\mathrm{I}}
\newcommand{\rvstatera}{\capsra{\mathrm{I}}}
\newcommand{\rvstatelra}{\capslra{\mathrm{I}}}
\newcommand{\rvedge}{\mathrm{E}}
\newcommand{\rvedgera}{\capsra{\mathrm{E}}}
\newcommand{\trans}[1]{q^{(#1)}}
\newcommand{\transtup}{q}
\newcommand{\one}{\mathit{1}}
\newcommand{\One}{\mathbbm{1}}
\newcommand{\dfas}{\mathbb{M}}
\newcommand{\reddfas}{\tilde{\mathbb{M}}}
\newcommand{\tildeQ}{\tilde{\mathcal{Q}}}
\newcommand{\Q}{\mathcal{Q}}
\newcommand{\M}{\mathcal{M}}
\newcommand{\tildeM}{\tilde{\mathcal{M}}}
\renewcommand{\a}{\mathsf{a}}
\newcommand{\h}{\mathrm{h}}
\newcommand{\supp}{\mathrm{supp}}
\renewcommand{\Pr}{\mathrm{Pr}}
\newcommand{\Cr}{\mathrm{Cr}}
\newcommand{\ubar}[1]{\underline{#1}}
\title[Finitary Process Evolution I]{Finitary Process Evolution I:\\ Information Geometry of Configuration Space and the Process-Replicator Dynamics}
\author{Leonardo Aguirre}
\begin{document}
	\maketitle
	
	\begin{abstract}
		This report presents some fundamental mathematical results towards elucidating the information-geometric underpinnings of evolutionary modelling schemes for quasi-stationary discrete stochastic processes. The model class under consideration is that of finite causal-state processes, known from the computational mechanics programme, along with their minimal unifilar hidden Markov generators. The respective configuration space is exhibited as a collection of combinatorially related Riemannian manifolds wherein the metric tensor field is an infinitesimal version of the relative entropy rate. Furthermore, a certain evolutionary inference iteration is defined which can be executed by generator-carrying agents and generalizes the Wright-Fisher model from population genetics. The induced dynamics on configuration space is studied from the large deviation point of view and it is shown that the associated asymptotic expectation dynamics follows the Riemannian gradient flow of a given fitness potential. In fact, this flow can formally be viewed as an information-geometric generalization of the replicator dynamics from population biology.
	\end{abstract}

	\tableofcontents

	\section{Introduction}
	
	The last few decades have witnessed a steadily growing interest in the modelling of complex dynamical systems perpetuated by applications in condensed matter physics, biology, computational science, network analysis, theories of cognition etc. Complementarily, recent advances in data mining 
and high-throughput screening techniques afforded the necessary means to harvest vast amounts of raw data. These developments sparked an abundance of approaches concerned with the practical side of modelling which over time resulted in a plethora of model-archetypes and heuristics. Meanwhile, a conceptual understanding of the phenomenon of complexity itself seems far more elusive. Part of the difficulty stems from the fact that ``complex system'' is frequently used as a blanket term for a variety of phenomena with vastly different  incarnations. The common denominator is usually taken to be an irreducible combination of effective stochasticity at the local level along with the emergence of some global spatiotemporal patterns - manifestly a vestige of many strong couplings among partially unobservable degrees of freedom (\cite{varn2016did}, \cite{shalizi2006methods}). A unified treatment of complex phenomena hinges on a sufficiently general approach to the notion of pattern as opposed to randomness. The evident conceptual obstacle is of course that, à priori, there is no universal way to make such a distinction - a problem which is very much exacerbated in the presence of large empirical datasets. The \emph{computational mechanics} research programme (\cite{crutchfield1989inferring}, \cite{shalizi2001computational}, \cite{lohr2009models}) provides a principled framework addressing this issue. It formalizes the concept of an observed system as a stochastic process, i.e. a random variable $\rvalphra=\rvalph_{1:\infty}$ on the sequence space $\alphset^\N$ for some finite alphabet $\alphset$ of ``measurement values''. This mimics the typical scenario when a physical system is coupled to a discrete measuring device which provides readings at evenly-spaced points in time. It is furthermore assumed that all observation statistics are invariant with respect to time shifts - at least on the time-scale of feasible observations - which translates to $\rvalphra$ being stationary. An idealized theoretical analogue of this scenario, coming from symbolic dynamics, consists in a measure-preserving dynamical system together with a partition of its state space into finitely many regions labelled by the letters of $\alphset$. Hereby, the system's trajectories are again mapped to realizations of a stationary stochastic process $\rvalphra$ and the measure-preserving dynamics is mapped to the left-shift of sequences.\footnote{Partitions yielding an almost-everywhere invertible mapping from points of the state space to sequences are called generating partitions. In dimension larger than 1, not much is known about them. Likewise, a major difficulty in modelling complex systems (indeed dynamical systems in general) pertains to the situation-specific problem of finding a suitable collection of observables whose time evolution reflects the system's ``characteristic properties''. The details of this tricky subject are certainly beyond the scope of the present treatise (cf. \cite{bollt2000validity}, \cite{bollt2001symbolic}). Instead, we will assume the positivist stance that at some stage of modelling/simulating dynamical systems, state space discretization is a practical necessity imposed by the use of some sort of digital measurement resp. data processing architecture and that the scientifically relevant objects are not so much the purported underlying systems but rather the observable stationary processes they produce.} Thus, stationary stochastic processes are henceforth taken as the fundamental objects of inquiry. This viewpoint holds the promise of making the above broad characterization of complexity more concrete while still remaining sufficiently general to capture a wide range of phenomena. The computational mechanics programme approaches the subject of complexity by recovering an intrinsic causal architecture behind stationary processes based on the concept of predictive sufficiency. Intuitively, this generalizes the notion of an $L$-th order Markov process to ``infinite order'' in the following fashion: One starts by using stationarity to canonically extend $\rvalphra$ to a random variable $\rvalph_{-L+1:\infty}$, for any $L\in\N$. The realizations of $\rvalph_{-L+1:0}$ are considered length-$L$ histories. If now $\rvalphra$ happens to be $L$-th order Markov, then any history $\a_{-L+1:0}$ induces a conditional probability measure on the set of ensuing future sequences and this ``prediction'' cannot be refined using longer pasts. However, there may be different length-$L$ histories which induce the same conditional probability measure on futures and the corresponding equivalence classes are exactly the $L$-th order Markov-states of the process. In the general stationary case, where $\rvalphra$ is not required to be Markov of any order, computational mechanics uses the Kolmogorov-extension theorem to complete the collection of semi-infinite processes $\rvalph_{-L+1:\infty}$, $L\in\N$, to a random variable $\rvalphlra$ with values in $\alphset^\Z$. A realization $\capsla{\a}\in\alphset_{-\infty:0}$ is considered an infinite history and two histories $\capsla{\a},\capsla{\mathsf{b}}$ are said to be \emph{predictively equivalent} if the conditional probability measure of $\rvalphra$ conditioned on $\rvalphla=\capsla{\a}$ is the same as the one conditioned on $\rvalphla=\capsla{\mathsf{b}}$. The set $\causset$ of predictive equivalence classes is called the process’s \emph{causal-state memory}. The canonical projection $\alphsetla\to\causset$ is a minimal sufficient statistic for predicting the future half of $\rvalphlra$ based on infinite histories and is essentially unique up to isomorphism in an appropriate measure-theoretically defined sense (see \cite{lohr2009models}). Furthermore, adding new observations to histories yields a mechanism for updating the memory state which turns the causal-state memory into a transition-emitting hidden Markov presentation of $\rvalphlra$ – the process’s \emph{causal-state presentation}. It represents the minimal  ``causal architecture'' capable of generating the bi-infinite process $\rvalphlra$.\footnote{Observe that the causal-state presentation is not predicated on any structural modelling choices. In fact, the original goal was to operationalize the notion of forecasting-complexity (\cite{grassberger1986toward}, \cite{crutchfield1989inferring}), denoting the amount of historical information in a stationary process that is required for optimal prediction. It can be quantified in form of the entropy of the causal-state memory random variable, called the process's statistical complexity.} Irrespective of infinite histories, the causal state presentation can be viewed as a transition-emitting hidden Markov model, or \emph{hidden Markov machine} (HMM, see Sec. \ref{subsec:HMMs}), of the semi-infinite process $\rvalphra$ with internal state set $\causset$. If the latter has finite cardinality, the minimality statement can be strengthened to the assertion that the causal-state presentation is the \emph{minimal asymptotically synchronizable HMM of $\rvalphra$} in the sense of Def. \ref{defn:synchronizablehmm}. We shall refer to processes with finite causal-state memory as \emph{finitary} and to the induced causal-state HMMs as their \emph{causal-state machines (CSMs)}.\footnote{They are historically referred to as $\epsilon$-machines in computational mechanics. The $\epsilon$ originally indicated the resolution of discretizing a continuous dynamical system but has no further bearing on the theory. The term CSM is chosen here due to being more descriptive. Note also that a given finitary process may allow generating HMMs smaller than its CSM but they would not be asymptotically synchronizable.} Importantly, the state-transition combinatorics of CSMs satisfies a structural constraint called \emph{unifilarity} (Def. \ref{defn:unifilarity}), which makes for a well-behaved analytical theory. In fact, unifilar HMMs have been investigated for a long time in symbolic dynamics and formal language theory under the alternative names right-resolving presentations and probabilistic deterministic finite automata (\cite{lind1995introduction} ,\cite{delaHiguera:2010:GIL:1830440}, \cite{dupont2005links}).

Through the concept of a process's causal-state presentation, questions about pattern and complexity can be restated as structural questions about the presentation's transition structure. The article \cite{crutchfield2014dreams} proceeds to delineate an intriguing long-term objective of developing this approach into a framework for automated theory building, superseding the common pattern recognition practice of making educated guesses about the expected types of patterns. There is however one caveat to the story: While in theory there always exists an essentially unique causal state-presentation for any given stationary process, the generic case involves a causal-state memory of infinite cardinality. By contrast, the only causal-state presentations that can conceivably be reconstructed from finite sample data are CSMs. There are a number of practical algorithms dealing with the task of CSM reconstruction (\cite{crutchfield1991reconstructing}, \cite{shalizi2004blind}, \cite{strelioff2014bayesian}) but the fundamental idea of finite reconstruction raises a few conceptual questions, for instance: In what way does a reconstructed CSM, respectively its output-process, approximate the original if their causal-state memories have different cardinalities?\footnote{An approach of approximating just the causal-state memory (without transition-structure) by smaller state sets can be found in \cite{still2010optimal} under the name ``causal compression''.}  Are there on-line-reconstruction schemes whose evolution on the \emph{space of CSMs} can be understood analytically? These questions initiated the author's ongoing research effort about finitary process evolution. The present report contains some preliminary results which might be of interest in their own right.

\paragraph{\textbf{Generalized Wright-Fisher evolution and the associated information geometry: }}

The present approach to CSM reconstruction differs from existing methods by setting the reconstruction process in the context of artificial evolution according to the following blueprint. The units of selection are reproducing agents carrying a unifilar HMM as well as a simple self-resampling mechanism for recording imperfect copies of it into their offspring. Selection proceeds by weeding out offspring upon comparison of their output processes with respect to some given fitness potential function $\Phi$. The reproduction-selection iteration runs in non-overlapping generations of length $\tau$ and thereby generates an HMM-valued process which is named \emph{generalized Wright-Fisher (gWF-) process} for reasons outlined below. Note that, conceptually, this setup requires the distinction of two time-scales: First, a fast time-scale on which an agent expresses its (stationary) output-process. Second, a slow time-scale on which the agent's configuration changes according to the gWF-process.\footnote{This idea has been applied previously in \cite{gupta2007symbolic} to model gradual development of mechanical anomalies in human-engineered dynamical systems with quasi-stationary behaviour.} Before going into more details, a few comments are in order about how this general setup differs from existent reconstruction paradigms. First of all, as the word ``reconstruction'' indicates, past approaches have been geared exclusively towards approximating the causal-state presentation of some given process. In contrast, the present approach has potentially a wider scope by allowing to select offspring upon arbitrary environmental reinforcement $\Phi$, e.g. entailed by game-theoretic interactions with the environment. Another main difference is that past approaches were designed to operate mainly in batch-mode while the present approach naturally works in on-line-mode providing full-fledged unifilar HMMs at any stage of evolution.\footnote{In fact, unifilar HMMs are generically CSMs (Cor. \ref{cor:degvariety}).} This evolutionary paradigm seems particularly well-suited to situations where some process appears stationary on a dynamically significant time-scale but may change gradually in the long run.

We shall now provide some details about the gWF-process. As the name indicates, it will be constructed as a generalization of the well-known (frequency-dependent) Wright-Fisher process from population genetics. It is instructive to swiftly review this classical model of evolution as well as its asymptotic behaviour: 
The Wright-Fisher model describes the evolution of a finite population of constant size, comprised of species $\a \in\alphset$, through self-resampling. The population's configuration can be viewed as a point $q=(q^\a)_{\a \in\alphset}$ of the $(|\alphset|-1)$-dimensional standard-simplex $\bar{\Sigma}^\alphset$ in $\R^\alphset$. Evolution proceeds by  (sequentially) sampling members of the population (without permanently removing them). Thus a sample of length $L\in\N$ becomes a string $\a_{1:L}\in\alphset_{1:L}$ and in fact this sample occurs with probability
\begin{equation}\label{eq:classicalprobL}
	\Pr_{\rvalph_{1:L}}(q)[\a_{1:L}]=\prod_{l=1}^L q^{\a_l}.
\end{equation}
It gives rise to the new population configuration $\left( \frac{1}{L}\mathrm{card}\set{1\leq l\leq L}{\a_l=\mathsf{b}}\right)_{\mathsf{b}\in\alphset}$ and such self-resamplings occur after constant generation intervals of length $\tau$. The \emph{frequency-dependent Wright-Fisher model} additionally modifies (\ref*{eq:classicalprobL}) by skewing the sampling probability of species $\a$ with its selection coefficient $F_\a(q,\tau)$ depending on the current configuration $q$ and the generation length $\tau$. The probability that a configuration $q$ evolves into $q'$ in one generation\footnote{Assuming both are allowed configurations i.e. all frequencies are multiples of $1/L$.} is then given by the Markov-kernel
\begin{equation}\label{eq:wrightfisher}
	R^{L}[q'|q] =\frac{1}{Z_L} \frac{L!}{\prod_{\a \in\alphset} (Lq'^{\a})!} \prod_{\a \in\alphset}(q^{\a} F_\a(q,\tau))^{L q'^{\a}},
\end{equation}
where $Z_L=\left(\sum_{\a} q^\a F_\a(q,\tau)\right)^L$ is for normalization. The resulting Markov-process on $\bar{\Sigma}^\alphset$ is called the \emph{(frequency-dependent) Wright-Fisher (WF-) process}.
Good asymptotic results for $L\to\infty$, $\tau\to 0$ can be obtained under the assumption of weak selection, meaning that there exist ``differential selection functions'' $f_\a:\bar{\Sigma}^\alphset\to\R$ such that $F_\a(q,\tau)= 1+ \tau f_\a(q)+o(\tau).$ Depending on the asymptotics of $L\tau$, the respective WF-processes converge weakly to a combination of a diffusion and a convection process. If $L\tau\to\infty$ the limit process is purely convective and its characteristic curves solve the well-known replicator equation:\footnote{See \cite{chalub2014frequency} for this result and \cite{chalub2009discrete}, \cite{schlag1998imitate} for other discrete models of evolution leading asymptotically to the replicator equation. It originally arose as an model of selection in an infinite population of replicators whose rate of reproduction is prescribed by the differential selection functions $f_\a$. The classical linear example $f_\a(q)=\sum_{\mathsf{b}\in\alphset} r_{\a \mathsf{b}} q^\mathsf{b}$ comes from evolutionary game theory, specifically sequential normal-form games, where $r_{\a \mathsf{b}}$ is the payoff for behaviour $\a$ in an interaction with behaviour $\mathsf{b}$ (\cite{page2002unifying}, \cite{hofbauer2003evolutionary}).}
\begin{equation}\label{eq:classicalreplicator}
\dot{q}^\a = q^\a (f_\a(q)-\bar{f}(q))\ ,\ \a \in\alphset.
\end{equation}
Note that (\ref*{eq:classicalreplicator}) can also be obtained from the WF-process in a simpler, albeit more heuristic, way bypassing the limit Markov-process: Consider the deterministic process, which maps $q$ to the expectation value $\langle q'\rangle_{R^{L}[q' | q]}$. It can easily be verified that the trajectories of this expectation-process converge in the limit $\tau\to 0$ to the integral curves of (\ref*{eq:classicalreplicator}).

We shall now concretize the gWF-process and extend the previous Wright-Fisher narrative to the setting of finitary process evolution. Note that, in the language of probability theory, the WF-kernel (\ref*{eq:wrightfisher}) describes evolution simply as iterated replacement of a current population-distribution $q$ by an empirical distribution $q'$ whose probability is the likelihood to be sampled from $q$ weighted by the selection function $F(q',q,\tau)=\prod_{\a}(F_\a(q,\tau))^{L q'^{\a}}$ quantifying the effective fitness of the variation $q'$ from $q$. More generally, let us from now on denote by $q$ a unifilar HMM  and by $\Pr_{\rvalphra}(q)\in\mathscr{P}[\alphsetra]$ its (stationary) output-process. The \emph{gWF-model} works by alternating between the following two steps:
\begin{itemize}
	\item \textbf{Selection:} A fitness potential function $\Phi$ shall be given on a sufficiently large subset of $\mathscr{P}[\alphsetra]$, i.e. containing all occurring output-processes. The current generation of agents is comprised of $N$ members carrying unifilar HMMs $q_1,\ldots, q_N$. The agent number $m$ whose HMM yields the highest value of $\Phi\circ\Pr_{\rvalphra}$ is selected for reproduction.\footnote{If several agents maximize $\Phi\circ\Pr_{\rvalphra}$ simultaneously, one of them can be chosen at random. The way in which this is done will have no bearing on the asymptotic results in this paper. More generally, a sequel to the present report will examine probabilistic selection mechanisms.}
	\item \textbf{Reproduction:}
	Agent number $m$ produces the next generation of $N$ agents with unifilar HMMs $q'_1,\ldots, q'_N$ by creating sequence realizations of length $L$ from $q_m$ and using the empirical transition counts between internal states to obtain the new transition probabilities for the $q'_n$.
\end{itemize}
More formal definitions will be given in Sec. \ref{sec:evolution} once the relevant notations have been introduced. Looking just at the evolution of the selected procreator HMMs over the course of generations yields the \emph{gWF-process}. Its transitions occur according to a Markov-kernel $R^{L}_{N,\Phi}$ on the space of unifilar HMMs having constant ``combinatorial type''. Much like in (\ref*{eq:wrightfisher}), the transition probability $R^{L}_{N,\Phi}[q'|q]$ is the product of the likelihood that $q'$ is empirically sampled as an offspring of $q$ times a selection term $F(q',q)$ being the likelihood that the other offspring of $q$ yield a lower value of $\Phi\circ\Pr_{\rvalphra}$. Sec. \ref{sec:evolution} investigates the deterministic expectation-process with transition $q\mapsto \langle q' \rangle_{R^{L}_{N,\Phi}[q'|q]}$ and its asymptotic behaviour as the time-scales of output-processes and gWF-process decouple in the sense $L\to\infty, \tau\propto \sqrt{1/L}$. The upshot is that, in this limit, the expectation-process's trajectories converge to the integral curves of a generalized form of the replicator equation, which will be called the process-replicator equation:
\begin{equation}\label{eq:replicatorconservative}
\dot{q}= \nabla_\mathrm{g} (\Phi\circ\Pr_{\rvalphra}).
\end{equation}
Herein $\nabla_\mathrm{g}$ is the Riemannian gradient with respect to a certain metric tensor field $\mathrm{g}$ which is obtained as the leading asymptotics of the relative entropy rate on the space of CSMs. Note that, by this token, the gWR-iteration induces a specific information geometry on the space of CSMs. The classical population-replicator result is recovered in the special case of a single causal memory state (see Fig. \ref{fig:1statedfa}): Here, the output-processes are simply i.i.d. sequences of $\alphset$-valued random variables and the relative entropy rate is $\mathrm{h}(\Pr_{\rvalphra}(q')\parallel \Pr_{\rvalphra}(q))=\sum_{\a} q'^\a \log\frac{q'^\a}{q^\a}$. Hence we obtain $\mathrm{g}=\sum_{\a} \frac{1}{q^\a} (\d q^{\a})^{\otimes 2}$ which turns (\ref*{eq:replicatorconservative}) into (\ref*{eq:classicalreplicator}) for $f_\a = \frac{\partial}{\partial q^\a} (\Phi\circ\Pr_{\rvalphra})$.
\begin{figure}
	\begin{tikzpicture}[-Triangle, every loop/.append style = {-Triangle}]
	
	\path[use as bounding box] (-2,-2) rectangle (2,2);
	
	\node[state, inner sep=0pt, minimum size=7pt,fill=gray!30]  (1) {};
	
	\draw    (1)  edge[loop left, out=45, in=315, looseness=15, inner sep=1 pt, "$\cdots$"] (1);
	
	\draw    (1)  edge[loop left, out=70, in=290, looseness=50, inner sep=1 pt, "$\mathsf{n}$"] (1);
	
	\draw    (1)  edge[loop left, out=135, in=225, looseness=15, inner sep=1 pt, "$\cdots$", swap] (1);
	
	\draw    (1)  edge[loop left, out=110, in=250, looseness=50, inner sep=1 pt, "$\mathsf{0}$", swap] (1);

	\end{tikzpicture}
	\caption{Transition graph of a CSM with one causal state and alphabet $\alphset=\{\mathsf{0},\ldots, \mathsf{n}\}$. The transition probabilities are omitted.}
	\label{fig:1statedfa}
\end{figure}
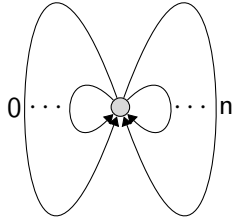

\paragraph{\textbf{Organization of the report:}}

Sec. \ref{sec:HMM} fixes notations and gives some background on presentations of stationary processes. In particular, finitary processes will be formally introduced along with their CSMs and the latter will be characterized in terms of their transition structure. Everything in this section is common knowledge in the field of hidden Markov processes and computational mechanics. The proofs are referred to the literature. In Sec. \ref{sec:relentrate}, the formal extension of the replicator equation is undertaken. To that end, Sec. \ref*{subsec:geomsetup} establishes the space of all CSMs on a given internal state set as a collection of combinatorially related \emph{configuration manifolds} which together constitute an open subset of some polytopal cell complex. Sec. \ref*{subsec:relentrate} derives an exact expression for the relative entropy rate $\mathrm{h}$ holding between processes whose CSMs lie in the same configuration manifold. Sec. \ref*{subsec:Riemanniangeom} proceeds by deriving the local asymptotics of $\mathrm{h}$ which then serves to equip the configuration manifolds with a Riemannian metric tensor $\mathrm{g}$. In Sec. \ref*{subsec:processreplicator} the process-replicator equation with given fitness potential $\Phi:\mathscr{P}[\alphsetra]\to\R$ is generalized from the simplex to the configuration manifolds as the gradient flow equation of $\Phi\circ \Pr_{\rvalphra}$. Furthermore, an extension of the folk theorem from evolutionary game theory is proved for the solutions of the process-replicator equation. Sec. \ref{sec:evolution} reveals the significance of the process-replicator equation by relating it to the gWF-process. To that end, Sec. \ref*{subsec:empiricalmachines} expresses the self-resampling probabilities from the reproduction-step of the gWF-model as a discrete statistical ensemble on configuration space - the \emph{empirical configuration ensemble} - and shows that it satisfies a large deviation principle. In Sec. \ref*{subsec:fluctuationmetric}, a corresponding central limit theorem is obtained in which $\mathrm{g}(q)$ features as the covariance matrix of a Gaussian measure on the tangent space at $q$ of the respective configuration manifold. Sec. \ref*{subsec:greedyasymptotics} concludes the narrative by showing that the trajectories of the gWF-iteration's expectation-process asymptotically, for large self-resampling lengths $L$, converge to the integral curves of the process-replicator equation. Sec. \ref{sec:examples} illustrates the process-replicator dynamics in a few low-dimensional examples. To not interrupt the flow of argument, the mathematically more involved proofs are collected in the appendix and the respective theorems in the main text are followed by short proof outlines.

\paragraph{\textbf{Relevance and further directions:}}

The main goal of the present report is to expound some technical results which seem to be useful in understanding evolutionary CSM inference schemes, contributing to the computational mechanics programme and more broadly to the theory of hidden Markov processes. The notable mathematical results are: (i) The exact relative-entropy rate formula of Thm. \ref{thm:relentrate}, which is a quite extraordinary fact in the context of general hidden Markov processes. (ii) A natural Riemannian geometry on the configuration manifolds of finitary processes along with an associated process-replicator dynamics, which generalizes previous approaches from theoretical biology and information geometry. (iii) A large deviation principle (Thm. \ref{thm:largeconsistentdev}) and a corresponding central limit theorem (Thm. \ref{thm:centrallimitthm}) pertaining to the empirical configuration ensemble, which generalize well-known results familiar from (generic) ergodic Markov processes. Beyond those technical results, another contribution consists in founding a new CSM inference paradigm which should ultimately be suitable for on-line-learning in software-agent implementations and, at the same time, induce a Markovian configuration space dynamics with a tractable continuum limit. The construction and analysis of this scheme is however not completed in the present report. Two main aspects are missing and will be investigated in follow-up papers: First, the construction of the full continuum limit process in the context of stochastic Itô calculus: Results to that effect should be of independent relevance as they lead to a automata-theoretic generalization of the Wright-Fisher diffusion -  a well-studied limit dynamics of genetic drift in mathematical biology (\cite{chalub2014frequency},\cite{chalub2009discrete}). Second, the issue of inferring the combinatorial type of CSMs: Incorporating methods to that effect into the present parameter inference scheme and investigating the ensuing dynamics in terms of the geometry of configuration space promises new insights towards a notion of ``lossy causal compression'' of stationary processes (see also \cite{still2010optimal}).

On the other hand, the proposed inference scheme could be evaluated on empirical datasets, firstly in cases where CSM reconstruction by other methods has been applied previously, such as: x-ray diffraction patterns in complex materials (\cite{riechers2015pairwise},\cite{varn2016did}), conformational dynamics of single molecules (\cite{nerukh2012non}, \cite{li2008multiscale}), meteorological and geomagnetic datasets (\cite{palmer2000complexity}, \cite{clarke2003application}), automated visualization of fluid dynamics (\cite{janicke2009steady}), on-line anomaly detection in mechanical systems (\cite{gupta2007symbolic}), analysis of neural spike trains and discovery of functional connectivity in neural ensembles (\cite{haslinger2010computational}, \cite{shalizi2007discovering}), natural language processing (\cite{padro2005named}), user behaviour and dynamics of content creation in social media (\cite{darmon2013predictability}, \cite{cointet2007intertemporal}). Secondly, expanding the scope of present CSM reconstruction methods through the use of general fitness potentials $\Phi$, the outlined inference paradigm seems to be compatible with basic concepts in stochastic game theory and reinforcement learning where agents and their environments are frequently modelled as partially observable Markov decision processes. For that purpose, the inference paradigm would need to be further elaborated to include conditional input-output processes (see \cite{barnett2015computational}).

Lastly, the proposed inference paradigm might have a stimulating influence on theoretical biology by aligning with a larger conceptual debate that has been simmering in the back of evolutionary biologists' minds for some time: Is there a rigorous way to phrase evolutionary adaptation of ecological processes as gradual inference of environmental conditions, and are there information-theoretic universalities about the way this happens (\cite{smith2000concept},\cite{wallace1998information},\cite{rivoire2011value},\cite{van2013biological})? The present report tries to provide a new input to this debate by offering a mathematically tractable paradigm of finitary process evolution.

\paragraph{\textbf{General notation and terminology:}}
For any finite set $\mathscr{S}$, the vector space $\R^{\mathscr{S}}$ is equipped with the standard basis $(\ket{s})_{s\in\mathscr{S}}$, and $(\bra{s})_{s\in\mathscr{S}}$ denotes the linearly dual basis in $(\R^{\mathscr{S}})^*$. The interior of the standard simplex in $\R^{\mathscr{S}}$ is denoted by $\Sigma^\mathscr{S}$. The evaluation of a linear form $\eta$ on a vector $v$ is written as $\eta\cdot v$ and this notation also applies to the pairing of mutually dual tensor fields on a differentiable manifold such as the evaluation of a Riemannian metric on a pair of vector fields written as $\mathrm{g}\cdot (\xi_1\otimes\xi_2)$. Measurable spaces are denoted by capital script letters such as $\mathscr{X}$ and their elements by lower case letters $x\in\mathscr{X}$. Measurable spaces with an evident topological structure are by default equipped with the Borel-$\sigma$-algebra and products are equipped with the respective product $\sigma$-algebras. The set $\mathscr{P}[\mathscr{X}]$ of probability measures on $\mathscr{X}$ is also considered a measurable space through its canonical $\sigma$-algebra, generated by pulling back the Borel-$\sigma$-algebra from $[0,1]$ along evaluations in measurable subsets of $\mathscr{X}$. A chosen probability measure $P\in \mathscr{P}[\mathscr{X}]$ makes $\mathscr{X}$ into a probability space and any measurable map $\mathrm{Y}:\mathscr{X}\to\mathscr{Y}$ into a $\mathscr{Y}$-valued random variable. The law of $\mathrm{Y}$ is the push-forward probability measure $P_\mathrm{Y}:=\mathrm{Y}_*P=P\circ\mathrm{Y}^{-1}$ and the probability of a measurable event $S\subset\mathscr{Y}$ will sometimes be written $P[\mathrm{Y}\in S]:=P_\mathrm{Y}[S]$. A Markov-kernel is a measurable map $R: \mathscr{Y}\to\mathscr{P}[\mathscr{Z}],\ y\mapsto R[\cdot | y ]$. For two random variables $\mathrm{Y}, \mathrm{Z}$, which are defined on the same probability space, the conditional probability kernel $P_{\mathrm{Z} | \mathrm{Y}}$ is the Markov-kernel
$$z\mapsto P_{\mathrm{Z} | \mathrm{Y}}[\cdot | y]:=P[\ \mathrm{Z}^{-1}(\cdot)\ |\ \mathrm{Y}^{-1}(y)\ ]$$ and it is tacitly assumed that a regular version is chosen (which is always possible for the probability spaces involved).

	\section{Computational Mechanics}\label{sec:HMM}
	
	\subsection{Stationary Processes and Hidden Markov Machines}\label{subsec:HMMs}
	
	For any measurable space $\mathscr{X}$, we write $\mathscr{X}_{m:n}:=\mathscr{X}^{\{m,\ldots,n\}}$ for the set of sequences $x_{m:n}=x_m\ldots x_n$ and use the shorthands $\mathscr{X}_m:=\mathscr{X}_{m:m}$ and $\ra{\mathscr{X}}\equiv\mathscr{X}_{1:\infty}:=\mathscr{X}^\N$. The map $\mathrm{X}_{m:n}$ denotes the canonical projection to $\mathscr{X}_{m:n}$ whenever this makes sense. An $\mathscr{X}$-valued \emph{(stochastic) process} is simply an $\capsra{\mathscr{X}}$-valued random variable. It is called \emph{stationary} if its law is invariant under the \emph{left-shift map} $x_{1:\infty} \mapsto x_{2:\infty}$. Throughout this report, $\alphset$ denotes a finite alphabet.
	\begin{defn}
		Let $P\in\mathscr{P}[\alphsetra]$ and suppose there exists a measurable space $\mathscr{S}$, a probability measure $Pr\in\mathscr{P}[\capsra{\mathscr{S}}\times\alphsetra]$ with
		\begin{itemize}\label{defn:markovcondition}
			\item[(i)] $P=Pr_{\rvalphra}$
			\item[(ii)] $Pr_{(\rvalph_{l},\mathrm{S}_{l+1}) | (\rvalph_{1:l-1}, \mathrm{S}_{1:l})}= Pr_{(\rvalph_{l},\mathrm{S}_{l+1}) | \mathrm{S}_{l}}$ and this Markov-kernel is shift-invariant.
		\end{itemize}
		The process $\capsra{\mathrm{S}}\times\rvalphra$ with law $Pr$ is called a (homogeneous) hidden Markov presentation of its \emph{output-process} $\rvalphra$ (having law $P$).
	\end{defn}
	
	Part (ii) of the above definition implies that the presentation $\capsra{\mathrm{S}}\times\rvalphra$ is a Markov chain and the same holds for the internal state process $\capsra{\mathrm{S}}$. Note however that the output-process $\rvalphra$ doesn't need to be Markovian and in fact every stationary process can be presented like this (e.g. by the construction in Sec. \ref{subsec:CSM}).
	
	We are now going to focus on presentations with a finite internal state set. For that purpose, $\stateset$ will always denote a finite set unless stated otherwise. Probability measures on $\stateset$ can be conveniently encoded as elements of the standard simplex in the \emph{internal state space} $(\R^{\stateset})^*$. The basis vectors $\bra{j}$ are called \emph{pure states} and a  \emph{mixed state} $\mu$ is a convex linear combination of pure states, i.e. $\mu\in (\R^{\stateset}_{\geq 0})^*$ with $\mu\cdot\one=1$, where $\one:=\sum_{j\in\stateset} \ket{j}$.
	The Markov-kernel in Def. \ref{defn:markovcondition}.(ii) encodes the transition probabilities
	$$q^{j,\a}_k=Pr_{(\rvalph_1, \rvstate_2) | \rvstate_1}[(\a, k) | j ]$$
	and we collect them into the non-negative linear operators
	$$\trans{\a}:= \sum_{j,k\in\stateset} q^{j,\a}_k\ \ket{j}\bra{k}\ \ , \a \in\alphset,$$
	called \emph{output-operators}.	The only restriction on them is that the \emph{total transition operator} $$\ubar{q}:=\sum_{\a\in\alphset} \trans{\a}$$ ought to preserve the vector $\one$ and, hence, the set of valid tuples of output-operators can be parametrized by the convex polytope
	$$\tilde\Q_{\stateset}:=\Set{(q^{j,\a}_k)\in \R^{\stateset\times\alphset\times\stateset}_{\geq 0}}{\ubar{q}\cdot\one = \one}.$$
	Observe that $\tilde\Q_{\stateset}$ has the combinatorial type of the $|\stateset|$-th power of the $(|\stateset||\alphset|-1)$-dimensional simplex.

	\begin{defn}\label{defn:HMM}
		\begin{itemize}
			\item[(i)] An element $q\in\tilde\Q_{\stateset}$ is called a \emph{Hidden Markov Machine (HMM)}\footnote{This term is for instance used in \cite{travers2011exact}, \cite{travers2011asymptotic}. History knows several other names for the same concept such as transition-emitting hidden Markov model or probabilistic finite-state automaton (\cite{vidal2005probabilistic}).} with \emph{output-alphabet} $\alphset$ and \emph{internal state set} $\stateset$. A pair $(q,\nu)$ with $\nu\in (\R^{\stateset}_{\geq 0})^*$ satisfying $\nu\cdot\one=1$ is called an \emph{initialized HMM} and $\nu$ is called the initial mixed state. A mixed state is called \emph{stationary} if it is invariant under $\ubar{q}$.
			\item[(ii)] The \emph{transition graph} $\Gamma(q)$ of some $q\in\tildeQ_\stateset$ is the directed edge-labelled multi-graph on the vertex set $\stateset$ which contains a labelled edge $j\xrightarrow{\a} k$ whenever $\bra{j}\trans{\a}\ket{k}>0$.
			\item[(iii)] An HMM $q$ is called \emph{(semi-)simple} if the linear operator $\ubar{q}$ is (semi-)simple. The set $\Q_{\stateset}\subset\tilde{\Q}_{\stateset}$ denotes the subset of  simple HMMs.
		\end{itemize}
	\end{defn}
	Clearly, an HMM $q$ is semi-simple iff every weakly-connected component of $\Gamma(q)$ is strongly-connected and it is simple iff $\Gamma(q)$ itself is strongly-connected. In order to investigate stationary output-processes, it is enough to consider semi-simple HMMs since otherwise all involved processes are left unchanged by passing to the internal state space $(\R^{\stateset'})^*$ where $\stateset'\subset\stateset$ is comprised of the vertices of the maximal strongly-connected subgraphs of $\Gamma(q)$.
	\begin{lem}\label{lem:stationarystates}
		For any semi-simple HMM $q\in\tildeQ_\stateset$ there exists a stationary mixed state $\nu$ with $\nu\cdot\ket{j}>0$ for any $j\in\stateset$. If $q\in\Q_\stateset$ then the stationary mixed state is unique and given by
		$$\pi(q) = \frac{1}{\sum_{k\in\stateset}\bm{[}\One -\ubar{q}\bm{]}_{kk}}\sum_{j\in\stateset} \bm{[}\One- \ubar{q}\bm{]}_{jj}\ \bra{j},$$
		where $\bm{[} x \bm{]}_{ii}$ denotes the $i$-th principal minor of the matrix $(\bra{j}x\ket{k})_{j,k\in\stateset}$.
	\end{lem}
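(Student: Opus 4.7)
The plan is to reduce the lemma to standard facts about finite row-stochastic matrices and then derive the explicit formula through the adjugate of $\One-\ubar{q}$. First note that the condition $\ubar{q}\cdot\one=\one$ makes $\ubar{q}$ into a row-stochastic operator on $\R^{\stateset}$ whose underlying directed graph is the unlabelled version of $\Gamma(q)$; a stationary mixed state is then precisely a probability distribution $\nu\in(\R^{\stateset}_{\geq 0})^*$ satisfying $\nu\cdot\ubar{q}=\nu$, i.e.\ a left Perron eigenvector of $\ubar{q}$ for eigenvalue $1$.

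For existence I would use the characterization of semi-simplicity stated immediately before the lemma: $\Gamma(q)$ decomposes into strongly-connected components $\stateset=\bigsqcup_\alpha C_\alpha$ with no inter-component transitions. The restriction of $\ubar{q}$ to each $C_\alpha$ is then an irreducible row-stochastic matrix, so Perron--Frobenius theory delivers a unique strictly positive stationary probability distribution $\nu_\alpha$ supported on $C_\alpha$. Any strictly positive convex combination $\nu:=\sum_\alpha\lambda_\alpha\nu_\alpha$ (with $\lambda_\alpha>0$, $\sum_\alpha\lambda_\alpha=1$) is then a stationary mixed state with $\nu\cdot\ket{j}>0$ for every $j\in\stateset$. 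When $q\in\Q_{\stateset}$, strong connectedness of $\Gamma(q)$ makes $\ubar{q}$ itself irreducible, so the $1$-eigenspace becomes one-dimensional and the stationary mixed state is unique.

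For the explicit formula I would study the singular matrix $A:=\One-\ubar{q}$, whose right kernel is $\R\cdot\one$ (of dimension exactly one in the irreducible case). The crucial tool is the classical adjugate $\mathrm{adj}(A)$: the identities $A\,\mathrm{adj}(A)=\mathrm{adj}(A)\,A=\det(A)\,\One=0$ together with $\dim\ker A = 1$ force $\mathrm{adj}(A)$ to have rank at most one, and hence to factor as an outer product $\mathrm{adj}(A)=\one\cdot u$ for some $u\in(\R^{\stateset})^*$ annihilating $A$ on the left. Reading off diagonal entries in bra-ket notation yields $\bra{j}\mathrm{adj}(A)\ket{j}=u\cdot\ket{j}$ on the one hand and, by the standard cofactor expansion, $[\One-\ubar{q}]_{jj}$ (the sign $(-1)^{2j}=1$ being trivial) on the other. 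Consequently $u=\sum_{j\in\stateset}[\One-\ubar{q}]_{jj}\,\bra{j}$ is a left $1$-eigenvector of $\ubar{q}$, and normalization by $u\cdot\one=\sum_{k\in\stateset}[\One-\ubar{q}]_{kk}$ produces the announced formula for $\pi(q)$.

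The one point that requires care is the non-vanishing and positivity of the normalizing denominator. This actually follows from the existence argument already given: the one-dimensionality of the left $1$-eigenspace forces $u$ to be a nonzero scalar multiple of the strictly positive stationary mixed state produced above, so all principal minors $[\One-\ubar{q}]_{jj}$ share a common nonzero sign and their sum is nonzero. As an alternative sanity check one may invoke the Matrix--Tree theorem for weighted digraphs, which identifies $[\One-\ubar{q}]_{jj}$ with the sum over spanning in-arborescences of $\Gamma(q)$ rooted at $j$, weighted by the product of transition probabilities --- manifestly strictly positive in the strongly-connected case.
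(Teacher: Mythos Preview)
Your argument is correct and essentially reproduces the standard derivation; the paper itself does not give a proof but merely refers to Seneta's monograph on non-negative matrices, where exactly this Perron--Frobenius reasoning and the adjugate computation of the stationary vector can be found. One small point worth tightening: the step from ``$\dim\ker A=1$'' to ``$u$ is a \emph{nonzero} multiple of the positive stationary vector'' needs the observation that $\operatorname{rank}(A)=|\stateset|-1$ forces $\operatorname{adj}(A)\neq 0$ (otherwise all $(|\stateset|-1)\times(|\stateset|-1)$ minors would vanish); you allude to this via the Matrix--Tree alternative, but it is worth stating explicitly in the main line of the argument.
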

	\begin{proof}
		See \cite{seneta2006non}.
	\end{proof}
	
	We shall now swiftly review how an initialized HMM $(q,\nu)$ generates a hidden Markov presentation $\rvstatera\times\rvalphra$. The idea is of course that any finite-length realization $(i_{1:L},\a_{1:L})$ is generated by starting  with probability $\mu\ket{i_1}$ in state $i_1$, then making a transition $i_1\xrightarrow{\a_1} i_2$ with probability $\bra{i_1}\trans{\a_1}\ket{i_2}$ followed by a transition $i_2\xrightarrow{\a_2} i_3$ with probability $\bra{i_2}\trans{\a_2}\ket{i_3}$ and so forth. We use this recipe to build
	a probability distribution $\Pr_{\rvedge_{1:L}}(q,\nu)$ on $\stateset_{1:L}\times\alphset_{1:L}$, namely
	\begin{equation} \label{eq:edgeseqprob}
	\Pr_{\rvedge_{1:L}}(q,\nu)[(i_{1:L},\a_{1:L})]:=\nu\cdot \ket{i_1}\ \cdot \left(\prod_{l=1}^{L-1} \bra{i_l} \trans{\a_l}\ket{i_{l+1}}\right)\cdot \bra{i_L}\trans{\a_L}\cdot\one.
	\end{equation}
	Invoking the Kolmogorov extension theorem, we obtain a unique probability measure $\Pr(q,\nu)$ on $\statesetra\times\alphsetra$ having the distributions (\ref*{eq:edgeseqprob}) as prefix-marginals.
	Summation over all possible output-sequences results in the internal state $(1:L)$-block distribution:
	\begin{equation} \label{eq:stateseqprob}
	\Pr_{\rvstate_{1:L}}(q,\nu)[i_{1:L}]=\nu\cdot \ket{i_1}\ \cdot \prod_{l=1}^{L-1} \bra{i_l} \ubar q \ket{i_{l+1}}.
	\end{equation}
	On the other hand, summing over all possible internal state sequences, we obtain the familiar formula for output $(1:L)$-blocks:
	\begin{equation} \label{eq:alphseqprob}
	\Pr_{\rvalph_{1:L}}(q,\nu)[\a_{1:L}]=\nu\cdot \trans{\a_{1:L}}\cdot\one,
	\end{equation}
	where we used the shorthand $$\trans{\a_{1:L}}:=\trans{\a_1}\cdot\trans{\a_2}\cdots\trans{\a_L}.$$
	\begin{defn}
		Let $(q,\nu)$ be an initialized HMM on internal state set $\stateset$. We set $\edgeset:=\stateset\times\alphset$.
		\begin{itemize}
			\item[(i)] The identity random variable $\ra\rvedge$ on $\ra\edgeset$ with law $\Pr_{\rvedgera}(q,\nu)=\Pr(q,\nu)$ is called the \emph{edge-process} generated by $(q,\nu)$.
			\item[(ii)] The canonical projection random variable $\ra\rvstate: \edgesetra\to\statesetra$ with law $\Pr_{\rvstatera}(q,\nu)$ is called the \emph{internal state process} of $(q,\nu)$.
			\item[(ii)] The canonical projection random variable $\ra\rvalph: \edgesetra\to\alphsetra$ with law $\Pr_{\rvalphra}(q,\nu)$ is called the \emph{output-process} of $(q,\nu)$.
		\end{itemize}
	\end{defn}
	Of course $\rvstatera\times\rvalphra$ is by construction a hidden Markov presentation of the output-process $\rvalphra$.

	 \subsection{Causal-State Machines}\label{subsec:CSM}
	 
	 One of the fundamental building blocks of the computational mechanics programme is that, for any given \emph{stationary} process $\rvalphra$, there always exists a certain hidden Markov presentation which is constructed in the following fashion: 
	 We canonically extend the law of $\rvalphra$ to a stationary probability measure $P$ on $\capslra{\alphset}:=\alphset^{\mathbb{Z}}$ and set  $\capsla{\alphset}:=\alphset_{-\infty:0}$. We define an equivalence relation $\sim$ on $\capsla{\alphset}$ by
	 $$\capsla{\a} \sim \capsla{\mathsf{b}} \quad :\Leftrightarrow\quad P_{\capsra{\rvalph} | \capsla{\rvalph}}[ \cdot | \capsla{\a} ] = P_{\capsra{\rvalph} | \capsla{\rvalph}}[ \cdot | \capsla{\mathsf{b}} ],$$
	 i.e. past sequences are considered to be equivalent if they lead to the same conditional law on future sequences. The set of equivalence classes $\causset:=\capsla{\alphset}/\sim$ is called the \emph{causal state memory} of $\rvalphra$. Repeating this procedure for any other separation point of past and future, the canonical projections to equivalence classes can be combined into a measurable projection $\capslra{\alphset} \onto \capsra{\causset}\times\alphsetra$\footnote{It is measurable because the canonical projection $\capsla{\alphset}\to\causset$ is measurable according to \cite{lohr2009models}, Lem.3.18.}
	 Pushing forward the probability measure $P$
	 yields a hidden Markov presentation $\capsra{\mathrm{C}}\times\rvalphra$ which we call the \emph{causal-state presentation} of $\rvalphra$. It satisfies certain measure-theoretically defined minimality and uniqueness properties as elaborated in \cite{lohr2009models}. In focusing on the the finite-state case, we will be able to avoid most of the measure-theoretic complications.
	 \begin{defn}\label{def:finitary}
	 	An $\alphset$-valued stationary process $\rvalphra$ is called \emph{finitary} if its causal-state memory has finite cardinality.\footnote{Strictly speaking, the causal-state memory depends on the chosen version of conditional probability (as remarked in \cite{lohr2009models}). Thus, the above definition means that there is a version of conditional probability yielding a finite causal-state memory and we assume wlog. that all causal states obtain non-zero measure under the canonical projection. The term ``finitary'' has previously been used for instance in \cite{johnson2010enumerating}, \cite{gornerup2008hierarchical} to indicate finite causal-state memory. Its different use in the influential paper \cite{heller1965stochastic} denotes what is nowadays commonly called a finite-dimensional process.}
	 \end{defn}
	 
	 To characterize the causal-state presentation of some finitary process as a special generating HMM, we define the following important property.
	 \begin{defn}\label{defn:synchronizablehmm}
	 	A semi-simple HMM $q\in\tildeQ_\stateset$ is called \emph{asymptotically synchronizable} if, for any stationary mixed state $\nu$, we have almost surely
	 	\begin{equation*}\label{eq:synchronizing}
	 	\lim_{L\to\infty} \mathrm{H}_{\stateset}\left(\nu\cdot \trans{\rvalph_{1:L}}\right) = 0,
	 	\end{equation*}
	 	where
	 	$$\mathrm{H}_{\stateset}(\mu):= \sum_{j\in\stateset} \frac{\mu\cdot\ket{j}}{\mu\cdot\one} \log_{|\stateset|}\left(\frac{\mu\cdot\ket{j}}{\mu\cdot\one}\right)$$
	 	is the \emph{mixed state entropy} of $\mu$ with respect to the basis $(\ket{i})$.
	 \end{defn}
	 Informally speaking, asymptotic synchronizability means that, for almost any output-sequence $\ra{\a}$ and any $\ra{e}\in\statesetra\times\alphsetra$ with $\rvalphra(\ra{e})=\ra{\a}$, the internal states $\rvstate_L(\ra{e}), L\in\N,$ are ``asymptotically determined'' by $\a_{1:L}$. This property in particular requires $q$ to be \emph{unifilar} in the following sense:
	 \begin{defn}\label{defn:unifilarity}
	 	An HMM $q\in\tilde\Q_{\stateset}$ is called \emph{unifilar} if the graph $\Gamma(q)$ contains at most one outgoing $\a$-labelled edge at $j$, for any $(j,\a) \in\edgeset$. The set of unifilar HMMs is denoted by $\tildeM_\stateset\subset\tildeQ_\stateset$ and the set of simple unifilar HMMs by $\M_\stateset\subset\Q_\stateset$.
	\end{defn}
	Conversely, unifilarity does not guarantee asymptotic synchronizability. In addition, some minimality property needs to be satisfied:
	\begin{prop}\label{prop:minimality}
		Let $q\in\tildeQ_\stateset$ be semi-simple.
		\begin{itemize}
			\item[(i)] The following are equivalent:
			\begin{itemize}
				\item[(a)] Denoting by $\causset$ the causal-state memory of $q$'s output process $\rvalphra$, there is a bijection $c: \stateset \to \causset$ such that $c(\rvstatera)\times\rvalphra$ is the causal-state presentation of $\rvalphra$.
				\item[(b)] $q$ has the minimal number of internal states among all asymptotically synchronizable HMMs which are able to generate the same output-process as $q$ (upon suitable initialization).
				\item[(c)] $q$ has the minimal number of internal states among all unifilar HMMs which are able to generate the same output-process as $q$ (upon suitable initialization).
				\item[(d)] $q$ is unifilar and its internal states are predictively distinct in the sense that for any $j,k\in\stateset$:
				$$\Pr_{\rvalphra}(q,\bra{j}) = \Pr_{\rvalphra}(q,\bra{k}) \quad \Rightarrow\quad j=k.$$
			\end{itemize}
			Any of these equivalent properties specifies $q$ uniquely up to relabelling of internal states.
			\item[(ii)] Let $q\in\tildeM_\stateset$ and consider the equivalence relation $\sim$ on $\stateset$, given by
			$$j\sim k\quad \Leftrightarrow \quad \Pr_{\rvalphra}(q,\bra{j}) = \Pr_{\rvalphra}(q,\bra{k}).$$
			There is a unifilar HMM $q' \in\tildeM_{\stateset/\sim}$ with predictively distinct states and for any initialization $\nu$ of $q$ there is an initialization $\nu'$ of $q'$ such that
			$$\Pr_{\rvalphra}(q,\nu)=\Pr_{\rvalphra}(q',\nu').$$
		\end{itemize}
	\end{prop}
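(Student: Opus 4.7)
I would prove part~(ii) first since its quotient construction is essential to handling the minimality conditions in~(i). Given $q\in\tildeM_\stateset$ unifilar, define $q'$ on $\stateset/\sim$ by setting the $\a$-transition weight from $[j]$ to $[k]$ equal to $\bra{j}\trans{\a}\ket{j^{\a}}$ whenever $j^{\a}$ is the unique $\a$-successor of $j$ under $q$ with $[j^{\a}]=[k]$, and zero otherwise. Well-definedness splits into three checks: (1) $j\sim k$ forces $\bra{j}\trans{\a}\cdot\one=\bra{k}\trans{\a}\cdot\one$, obtained from matching the one-block marginals of $\Pr_{\rvalphra}(q,\bra{j})$ via~(\ref{eq:alphseqprob}); (2) the $\a$-successors $j^{\a}$ and $k^{\a}$ themselves satisfy $j^{\a}\sim k^{\a}$, obtained by a Bayesian update applied to the equality of shifted conditional laws, again using (\ref{eq:alphseqprob}) and unifilarity to collapse $\bra{j}\trans{\a}\trans{\a_{2:L}}\cdot\one$ into $q^{j,\a}_{j^{\a}}\cdot\Pr_{\rvalph_{1:L-1}}(q,\bra{j^{\a}})[\a_{2:L}]$; (3) the resulting $q'$ is unifilar by construction and predictively distinct by the definition of $\sim$. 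Output-process invariance then follows because the linear action computing (\ref{eq:alphseqprob}) descends to $\stateset/\sim$, using the pushed-forward initialization $\nu'\cdot\ket{[j]}:=\sum_{k\in[j]}\nu\cdot\ket{k}$.

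For part~(i), the plan is to prove $\text{(a)}\Leftrightarrow\text{(d)}$ directly and then close the remaining implications using part~(ii) together with measure-theoretic results on causal-state presentations from~\cite{lohr2009models}. For $\text{(a)}\Rightarrow\text{(d)}$: the causal-state presentation is unifilar because the causal state of an extended past $\capsla{\a}\a_1$ depends only on the causal state of $\capsla{\a}$ and the letter $\a_1$, which is a standard property; predictive distinctness is built into the defining equivalence on $\capsla\alphset$. For $\text{(d)}\Rightarrow\text{(a)}$: define $c:\stateset\to\causset$ by sending $j$ to the causal-state class of (almost) any past sequence which, read forward from a stationary initialization, arrives in state $j$. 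Unifilarity makes $c$ well-defined, predictive distinctness makes it injective, and Lem.~\ref{lem:stationarystates} together with semi-simplicity ensures that every internal state (hence every causal state, via the factor map onto the causal-state presentation) is visited with positive probability, giving surjectivity. The uniqueness claim drops out from the resulting bijection combined with the identification of emission and transition probabilities across the two presentations.

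For $\text{(a)}\Rightarrow\text{(b)}$ and $\text{(a)}\Rightarrow\text{(c)}$, I would invoke the measure-theoretic minimality of the causal-state presentation from~\cite{lohr2009models}, which provides a canonical factor map onto the causal-state presentation from any presentation of $\rvalphra$; restricted to asymptotically synchronizable (resp. unifilar) HMMs generating the same output-process this forces $|\stateset|\geq|\causset|$, and the causal-state presentation itself achieves the bound. For $\text{(c)}\Rightarrow\text{(d)}$: given a minimal unifilar $q$, apply part~(ii) to produce a unifilar HMM with predictively distinct states and at most $|\stateset|$ states; minimality forces equality, so $q$ itself already has predictively distinct states. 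For $\text{(b)}\Rightarrow\text{(d)}$: observe that asymptotic synchronizability forces unifilarity, because any non-unifilar $(j,\a)$-transition would, conditional on emitting $\a$ from a state with positive mass on $j$, leave a residual mixed-state entropy bounded below uniformly in $L$, contradicting Def.~\ref{defn:synchronizablehmm}; hence condition~(b) reduces to~(c), which we have just reduced to~(d).

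The main obstacle will be rigorously identifying the ``eventual internal state'' map with the causal-state projection in the step $\text{(d)}\Rightarrow\text{(a)}$: the argument must show that, almost surely with respect to the extended stationary law on $\alphset^\Z$, two pasts arriving at the same internal state of $q$ belong to the same causal-state class, and conversely. The first direction is immediate from unifilarity (past determines present state up to a null set); the second requires predictive distinctness together with a block-by-block comparison of conditional laws on futures using~(\ref{eq:alphseqprob}), combined with the measurability of the canonical projection $\capsla\alphset\to\causset$ established in~\cite{lohr2009models}.
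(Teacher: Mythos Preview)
Your approach is substantially more self-contained than the paper's, which defers almost everything to the literature: \cite{traversequivalence} for (a)$\Leftrightarrow$(d), and \cite{lohr2009models} together with the identification of asymptotic synchronizability with state observability from \cite{travers2011asymptotic} for the remaining equivalences and for part~(ii). Your explicit quotient construction for~(ii) is correct, and your conditional-entropy argument that asymptotic synchronizability forces unifilarity is sound (the key inequality is $H(\rvstate_{L+1}\mid \rvalph_{1:L})\geq H(\rvstate_{L+1}\mid \rvalph_{1:L},\rvstate_L)=H(\rvstate_{L+1}\mid \rvalph_L,\rvstate_L)$, which is a positive constant whenever some transition is non-unifilar). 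These are genuine additions over what the paper writes out.

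There is, however, a real gap in your sketch of (d)$\Rightarrow$(a). The parenthetical ``past determines present state up to a null set'' is attributed to unifilarity, but unifilarity alone does \emph{not} give this: take two states with identical emission laws that swap on one letter and stay put on the other; this HMM is unifilar yet the mixed state never leaves the uniform distribution. What you actually need is that unifilarity \emph{together with} predictive distinctness forces asymptotic synchronization, i.e.\ that $\pi(q)\cdot q^{(\rvalph_{1:L})}$ concentrates on a pure state almost surely. This is precisely the non-exact synchronization theorem of \cite{travers2011asymptotic} (and is the substance of \cite{traversequivalence}), and it does not follow from the block-by-block comparison of future laws you propose: predictive distinctness tells you the measures $\Pr_{\rvalphra}(q,\bra{j})$ are pairwise distinct, not that they are affinely independent, so equality of two mixtures $\sum_j \mu_j \Pr_{\rvalphra}(q,\bra{j})$ does not by itself force equality of the mixing weights. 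Without synchronization your map $c$ sends $j$ to a conditional future law that need not be realized by any single causal state, and bijectivity onto $\causset$ cannot be concluded. The paper avoids this by citing \cite{traversequivalence} for (a)$\Leftrightarrow$(d) wholesale; if you want a self-contained argument you will have to reproduce the synchronization proof.
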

	\begin{proof}
		The equivalence of (a) and (d) has been shown in \cite{traversequivalence} for simple HMMs. The semi-simple case does not introduce substantial complications. For the other equivalences, note that, due to \cite{travers2011asymptotic}, asymptotic synchronizability is in the finite-state case equivalent to ``state observability'' of the extended combined process $\rvstatelra\times\rvalphlra$ in the sense of \cite{lohr2009models} and minimality of the causal state presentation of the output-process among state observable HMMs and unifilar HMMs is proved there as Cor. 3.40 and Cor. 3.42. The equivalence of (a) with (b) resp. (c) then follows from Thm. 3.41 and Prop. 3.20 of \cite{lohr2009models} which also imply (ii).
	\end{proof}
		
	\begin{defn}
		The semi-simple elements of $\tildeM_\stateset$ with predictively distinct states are called \emph{Causal-State Machines (CSMs)}.
	\end{defn}
	\begin{rem}
		As a consequence of Prop. \ref{prop:minimality}, we see that the processes which can be generated by unifilar HMMs are exactly the finitary processes from Def. \ref{def:finitary}. Moreover, the processes that can be generated by simple unifilar HMMs are exactly the \emph{ergodic} finitary ones, meaning those finitary $P\in\mathscr{P}[\alphsetra]$ satisfying $P[S]\in\{0,1\}$ for any shift-invariant event $S\subset\alphsetra$.
	\end{rem}
	The present section is concluded with an algebraic condition specifying the set of simple CSMs as a subset of $\M_\stateset$. Observe that, for a simple HMM $q\in\M_\stateset$, there is no ambiguity about the stationary initialization since Lem. \ref{lem:stationarystates} provides a unique stationary mixed state $\pi(q)$. In this case, we will henceforth write $\Pr(q)\equiv \Pr(q,\pi(q))$.
	
	\begin{prop}\label{cor:degvariety}
		Let $N=2|\stateset|-1$ and define $\mathcal{D}_\stateset\subset \R^{\stateset\times\alphset\times\stateset}$ as the set of real points of the algebraic set
		$$\bigcup_{j,k\in\stateset}\left(\bigcap_{\a_{1:N}\in\alphset_{1:N}}\mathcal{Z}\left((\bra{j}-\bra{k})\cdot q^{(\a_{1:N})}\cdot\one\right)\right)$$
		where $\mathcal{Z}(\cdot)$ denotes the zero locus of a polynomial in $\mathbb{C}[(q^{j,\a}_k)]$.
		The set of simple CSMs on the internal state set $\stateset$ is $\M_\stateset\setminus\mathcal{D}_\stateset$.
	\end{prop}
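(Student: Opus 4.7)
The plan is to combine Proposition~\ref{prop:minimality} with a Cayley--Hamilton-type reduction that trades the infinite family of block-probability equations defining predictive distinctness for the single block length $N = 2|\stateset|-1$ appearing in $\mathcal{D}_\stateset$. (The union defining $\mathcal{D}_\stateset$ is understood to range over pairs $j \neq k$, since the diagonal contributes trivially.) First I would invoke Proposition~\ref{prop:minimality}(i) to identify the simple CSMs inside $\M_\stateset$ as precisely those simple unifilar $q$ whose internal states are pairwise predictively distinct. By formula~\eqref{eq:alphseqprob} together with the Kolmogorov extension theorem, this condition becomes: for every $j \neq k$ some word $\a_{1:L}$ satisfies $(\bra{j}-\bra{k}) \cdot q^{(\a_{1:L})} \cdot \one \neq 0$. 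Equivalently, $q$ fails to be a CSM iff some pair $j \neq k$ makes the entire infinite family of polynomials $(\bra{j}-\bra{k}) \cdot q^{(\a_{1:L})} \cdot \one$, indexed by $L \geq 0$ and $\a_{1:L} \in \alphset^L$, vanish simultaneously.

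The core step shows this infinite system is equivalent to its length-$N$ slice. The contraction step: if $(\bra{j}-\bra{k}) \cdot q^{(\a_{1:N})} \cdot \one = 0$ for every length-$N$ word, then summing over a suffix of length $N-L$ and using stochasticity $\sum_{\a} q^{(\a)} = \ubar{q}$ together with $\ubar{q} \cdot \one = \one$ collapses each suffix sum back to $\one$, giving $(\bra{j}-\bra{k}) \cdot q^{(\a_{1:L})} \cdot \one = 0$ for every shorter $L \leq N$. The propagation step: consider the ascending filtration $V_L := \mathrm{span}\{q^{(\a_{1:l})} \cdot \one : l \leq L,\ \a_{1:l} \in \alphset^l\} \subseteq \R^\stateset$. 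An easy induction shows that once $V_L = V_{L+1}$, the subspace $V_L$ is $q^{(\a)}$-invariant for every $\a$ and hence coincides with the ultimate limit $V_\infty$; a dimension count in $\R^\stateset$ forces such stabilisation by $L = |\stateset|-1$ at the latest. Since $N = 2|\stateset|-1 \geq |\stateset|-1$, vanishing of $(\bra{j}-\bra{k})$ on the length-$N$ layer (and thereby, via contraction, on all of $V_N = V_\infty$) forces its vanishing on every future block probability.

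Combining the two reductions, for each pair $j \neq k$ the intersection $\bigcap_{\a_{1:N}} \mathcal{Z}\bigl((\bra{j}-\bra{k}) \cdot q^{(\a_{1:N})} \cdot \one\bigr)$ cuts out exactly the locus where $\bra{j}$ and $\bra{k}$ generate identical output processes, so the real points of the union $\mathcal{D}_\stateset$ are precisely the unifilar HMMs admitting a coincident pair of predictive distributions. By Proposition~\ref{prop:minimality} this locus is exactly the complement, inside $\M_\stateset$, of the simple CSMs, which gives the claim. The main obstacle is the propagation step, i.e.\ exhibiting an \emph{a priori} integer past which $V_L$ must stabilise; the generous constant $2|\stateset|-1$ in the statement leaves considerable slack for the dimension argument, which in fact already succeeds for $N = |\stateset|-1$.
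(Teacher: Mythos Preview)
Your argument is correct. The paper's own proof is a one-line appeal to a known identifiability result (attributed to Sch\"onhuth) stating that a finite-dimensional process generated by an $|\stateset|$-state HMM is determined by its length-$(2|\stateset|-1)$ marginals; from this, equality of $\Pr_{\rvalph_{1:N}}(q,\bra{j})$ and $\Pr_{\rvalph_{1:N}}(q,\bra{k})$ immediately forces equality of the full processes. You instead give a self-contained stabilisation argument: the filtration $V_L = \mathrm{span}\{q^{(\a_{1:l})}\cdot\one : l\le L\}$ is an ascending chain in $\R^\stateset$ satisfying $V_{L+1} = V_0 + \sum_\a q^{(\a)} V_L$, so it stabilises by $L=|\stateset|-1$, and your contraction step (summing out suffixes via $\ubar q\cdot\one=\one$) reduces the single length-$N$ layer to all shorter layers. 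Both approaches rest on Proposition~\ref{prop:minimality}(i)(d), but yours is more elementary---no external citation needed---and, as you observe, exposes that $N=|\stateset|-1$ already suffices. The constant $2|\stateset|-1$ in the statement is the standard Hankel-rank bound for distinguishing \emph{two different} HMMs; here the transition operators are shared, so only the ``forward'' span matters and the bound halves. Your parenthetical remark that the union must exclude the diagonal $j=k$ is also well taken: with $j=k$ the defining polynomials vanish identically and the stated $\mathcal{D}_\stateset$ would be all of $\R^{\stateset\times\alphset\times\stateset}$.
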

	\begin{proof}
		It is well-known (and can for example be gleaned from \cite{schonhuth2009characterization}), that $\Pr_{\rvalphra}(q)$ is determined by $\Pr_{\rvalph_{1:N}}(q)=\pi(q)\cdot q^{(\rvalph_{1:N})}\cdot\one$. Hence, the subset of non-CSMs in $\M_\stateset$ is comprised precisely of those unifilar simple HMMs $q$ possessing at least two internal states $j, k$ such that $\Pr_{\rvalph_{1:N}}(q,\bra{j})=\Pr_{\rvalph_{1:N}}(q,\bra{k})$, which is the set  $\mathcal{D}_\stateset\cap\M_\stateset$.
	\end{proof}

	\section{Geometry of the Relative Entropy Rate}\label{sec:relentrate}

The \emph{relative entropy rate}
$$\h(P'\parallel P):=\lim_{L\to\infty}\frac1L \sum_{\a_{1:L}\in\alphset_{1:L}} P'_{\rvalph_{1:L}}[\a_{1:L}]\log\frac{P'_{\rvalph_{1:L}}[\a_{1:L}]}{P_{\rvalph_{1:L}}[\a_{1:L}]}$$
is a positive-(semi-)definite function of two stochastic processes with laws $P, P'\in\mathscr{P}[\alphsetra]$ whenever the respective limit exists. In statistical estimation theory, it is also known as the \emph{Kullback-Leibler divergence rate} and commonly used to quantify the dissimilarity of $P'$ and $P$. Heuristically, it serves as a sort of ``distance'' - with the understanding that it is not a proper distance function e.g. failing to satisfy the triangle inequality. It however leads to an actual (semi-)Riemannian geometry on parameter manifolds of stochastic processes provided that it pulls back to a twice continuously differentiable function.\footnote{The analogous statement for the relative entropy of probability measures which either have finite support or possess a density is a basic fact in information geometry (\cite{amari2007methods}). The present results can be seen as an extension of that framework to the setting of ergodic finitary process measures.}
This construction shall be carried out in detail on the \emph{configuration space} $\M_\stateset\setminus\mathcal{D}_\stateset$ of simple CSMs. As a preparation, Sec. \ref*{subsec:geomsetup} exhibits this set as an open subset of the polytopal cell complex $\tildeM_\stateset$ of unifilar HMMs with the constituting open subsets of faces being viewed as differentiable manifolds. In Sec. \ref*{subsec:relentrate}, we shall see that the pullback of $\h$ to each of these manifolds can be expressed by an exact formula. In Sec. \ref*{subsec:Riemanniangeom}, we compute the induced Riemannian metric tensor $\mathrm{g}$ and relate $\mathrm{h}$ to the respective Riemannian action integral. In Sec. \ref*{subsec:processreplicator}, we establish the process-replicator equation (\ref{eq:replicatorconservative}) on configuration space and prove a statement about asymptotically stable rest points extending the folk theorem from evolutionary game theory.

\subsection{Combinatorial Geometric Setup}\label{subsec:geomsetup}
	The convex polytope $\tildeQ_\stateset$ comes with a natural face decomposition whose face poset is isomorphic to
	$$\set{S\subset\stateset\times\alphset\times\stateset}{\forall j\in\stateset\ \exists \a\in\alphset,\ k\in\stateset:\ (j,\a ,k)\in S}.$$
	The subset $\tildeM_\stateset\subset\tildeQ_\stateset$ of unifilar HMMs is a polytopal subcomplex of the $|\stateset|(|\alphset|-1)$-skeleton because all boundary faces of unifilar  faces are also unifilar.\footnote{In this report, a ``face'' is always an open face by convention.} To get a firmer grasp of the face-combinatorics of $\tildeM_\stateset$, we would like to classify the combinatorial types of unifilar transition graphs:	
	\begin{defn}
		Let $j\in\stateset$ and $L\in\N$.
		\begin{itemize}
			\item[(i)] For any $q\in\tildeM_\stateset$ and $\gamma=\Gamma(q)$ we set
			\begin{align*}
				\edgeset_{1:L}(\gamma,j)&:=\set{e_{1:L}\in\edgeset_{1:L}}{\rvstate(e_{1})=j \textnormal{ and the edge sequence } e_{1:L} \textnormal{ is a directed path in  } \gamma.}\\
				\alphset_{1:L}(\gamma,j)&:=\rvalph_{1:L}(\edgeset_{1:L}(\gamma,j))\\
				\edgeset_{1:L}(\gamma)&:=\bigcup_{j\in\stateset} \edgeset_{1:L}(\gamma,j)\\
				\alphset_{1:L}(\gamma)&:= \rvalph_{1:L}(\edgeset_{1:L}(\gamma))
			\end{align*}
			If $L=1$ we omit the subscript $1:L$.
			\item[(ii)] For any $q\in\tildeM_\stateset$, the graph $\gamma=\Gamma(q)$ is identified with the unique function
			$$\gamma:\ \bigcup_{j\in\stateset} \{j\}\times \alphset_{1:L}(\gamma,j)\ \to \stateset,$$
			satisfying $$\bra{j}\trans{\a_{1:L}}\ket{\gamma(j,\a_{1:L})}> 0\ ,\ \forall j\in\stateset,\ \a_{1:L}\in\alphset_{1:L}(\gamma,j),$$
			and called the \emph{DFA-type} of $q$.\footnote{Indeed $\gamma$ is just the transition function of a Deterministic Finite Automaton (DFA) on the state set $\stateset$ and alphabet $\alphset$.}
			\item[(iii)] $\reddfas_\stateset$ denotes the set of all DFA-types on the vertex set $\stateset$ and $\dfas_\stateset$ denotes the set of strongly-connected DFA-types. Furthermore $\reddfas_\stateset$ is made into a poset by setting $\gamma' < \gamma$ iff
			$$\mathrm{dom}(\gamma')\subsetneq\mathrm{dom}(\gamma)\qquad \textnormal{and}\qquad \gamma'=\gamma|_{\mathrm{dom}(\gamma')}.$$ In that case we call $\gamma'$ a \emph{subtype} of $\gamma$.
		\end{itemize}
	\end{defn}
	Observe that the projection
	$$\Gamma : \tildeM_\stateset \onto \reddfas_\stateset.$$
	is constant precisely on open faces thereby identifying $\reddfas_\stateset$ with the face poset of $\tildeM_\stateset$. We set $$\M_\gamma:= \Gamma^{-1}(\gamma).$$
	The subset $\M_\stateset\subset\tildeM_\stateset$ of simple unifilar HMMs forms an open subset in the combinatorial topology of the cell complex, because its complement of unifilar HMMs with not-strongly-connected DFA-types is a subcomplex.
	
	In order to exhibit the set of (semi-)simple CSMs as a collection of manifolds, it remains to describe the degenerate subsets $\mathcal{D}_\gamma:=\mathcal{D}_\stateset\cap \M_\gamma$. Fix $\gamma\in\reddfas_\stateset$ and consider the projection
	\begin{equation}\label{eq:faceembedding}
		\R^{\stateset\times\alphset\times\stateset}\onto\R^{\edgeset(\gamma)},\ q\mapsto \left(q^{j,\a}_{\gamma(j,\a)}\right)_{(j,\a)\in\edgeset(\gamma)}.
	\end{equation}
	It embeds $\M_\gamma$ as a semi-affine subspace\footnote{By this we mean the intersection of an affine subspace with a finite number of half-spaces.} of $\R^{\edgeset(\gamma)}$ which we again denote by $\M_\gamma$. Likewise we identify $\mathcal{D}_\gamma$ with its image under the above projection and write $q\equiv(q^{j,\a})_{(j,\a)\in\edgeset(\gamma)}$ in the \emph{ambient coordinates} on $\R^{\edgeset(\gamma)}$. Every $q\in\M_\gamma$ induces a partition $\Pi(q)$ of $\stateset$ into equivalence classes according to Prop. \ref{prop:minimality}.(ii). The set of partitions
	$\mathbb{L}_\gamma:=\set{\Pi(q)}{q\in\M_\gamma}$ is a lattice ordered by reverse refinement with maximal element $\{\stateset\}$ and minimal element $\{\{j\}\}_{j\in\stateset}$. For any $\sigma\in\mathbb{L}_\gamma$ we define
	$$\mathcal{D}_\gamma(\sigma):=\set{q\in\M_\gamma\subset \R^{\edgeset(\gamma)}}{\Pi(q)\geq\sigma}.$$
	This is the subset of unifilar HMMs such that, for any $s\in\sigma,\ j,k\in s,$ the internal states $j$ and $k$ are not predictively distinct.
	The collection $\{\mathcal{D}_\gamma(\sigma)\}_{\sigma\in\mathbb{L}_\gamma}$ can be partially ordered by reverse inclusion and we obtain the following  stratification of the degenerate locus:
	\begin{prop}\label{prop:subspacearrangement}
		Let $\gamma\in\reddfas_\stateset$. The map
		$$\mathbb{L}_\gamma\to\{\mathcal{D}_\gamma(\sigma)\}_{\sigma\in\mathbb{L}_\gamma}\ ,\quad \sigma\mapsto \mathcal{D}_\gamma(\sigma)$$ is an isomorphism of lattices. Removing the minimal element yields the intersection semi-lattice of the stratification $$\mathcal{D}_\gamma=\bigcup_{\sigma\in\mathbb{L}_\gamma \setminus \{\min \mathbb{L}_\gamma\}} \mathcal{D}_\gamma(\sigma)$$
		with the semi-affine subspaces
		\begin{equation}\label{eq:stratum}
			\mathcal{D}_\gamma(\sigma)=\set{q\in\M_\gamma\subset\R^{\edgeset(\gamma)}}{\forall s\in\sigma,\ j,k\in s,\ \a\in\alphset:\ q^{j,\a}-q^{k,\a}=0}.
		\end{equation}
	\end{prop}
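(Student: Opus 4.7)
The plan is to characterise $\mathbb{L}_\gamma$ concretely and then verify (\ref*{eq:stratum}), from which the lattice isomorphism and intersection semi-lattice assertions follow at once. Call a partition $\sigma$ of $\stateset$ \emph{$\gamma$-invariant} if for any $s\in\sigma$, $j,k\in s$ and $\a\in\alphset$ one has $\a\in\alphset(\gamma,j)\Leftrightarrow\a\in\alphset(\gamma,k)$ and, when defined, $\gamma(j,\a)$ and $\gamma(k,\a)$ lie in a common block of $\sigma$. I first claim that $\mathbb{L}_\gamma$ coincides with the set of $\gamma$-invariant partitions. For necessity, $j$ and $k$ being $\Pi(q)$-equivalent forces the 1-symbol marginals of $\Pr_{\rvalphra}(q,\bra{j})$ and $\Pr_{\rvalphra}(q,\bra{k})$ to agree, hence $\alphset(\gamma,j)=\alphset(\gamma,k)$; conditioning on an initial symbol $\a$ and using unifilarity gives $\Pr_{\rvalphra}(q,\bra{\gamma(j,\a)})=\Pr_{\rvalphra}(q,\bra{\gamma(k,\a)})$, so $\gamma(j,\a)$ and $\gamma(k,\a)$ are $\Pi(q)$-equivalent. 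For sufficiency, given a $\gamma$-invariant $\sigma$, one chooses $q^{j,\a}$ depending only on the $\sigma$-block of $j$ with strictly positive generic values on $\edgeset(\gamma)$ satisfying row-stochasticity; the resulting $q\in\M_\gamma$ has $\Pi(q)\geq\sigma$ by construction, and a dimension count analogous to Prop. \ref{cor:degvariety} shows the unintended coincidences form a proper algebraic subset, so $\Pi(q)=\sigma$ for generic $q$.

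Next I verify (\ref*{eq:stratum}). The forward inclusion is immediate: if $\Pi(q)\geq\sigma$ and $j,k$ share a $\sigma$-block, then by predictive equivalence the 1-symbol marginals $\Pr_{\rvalph_1}(q,\bra{j})[\a]=q^{j,\a}$ and $\Pr_{\rvalph_1}(q,\bra{k})[\a]=q^{k,\a}$ coincide for every $\a\in\alphset$. For the reverse, assuming the stated linear equations, I show by induction on $L$ that $\Pr_{\rvalph_{1:L}}(q,\bra{j})=\Pr_{\rvalph_{1:L}}(q,\bra{k})$ whenever $j,k$ share a $\sigma$-block. The base case $L=1$ is the hypothesis. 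For the step, unifilarity yields the factorisation
$$\Pr_{\rvalph_{1:L}}(q,\bra{j})[\a_{1:L}]=q^{j,\a_1}\cdot\Pr_{\rvalph_{1:L-1}}(q,\bra{\gamma(j,\a_1)})[\a_{2:L}],$$
with both factors vanishing for $\a_1\notin\alphset(\gamma,j)$; the hypothesis identifies the first factor with $q^{k,\a_1}$, and by $\gamma$-invariance of $\sigma$ the states $\gamma(j,\a_1)$ and $\gamma(k,\a_1)$ lie in a common block, so the inductive hypothesis identifies the second factor. Agreement of all finite block-marginals implies agreement of the full process laws, so $j$ and $k$ are $\Pi(q)$-equivalent, whence $\Pi(q)\geq\sigma$.

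The remaining structural statements then follow at once. Injectivity of $\sigma\mapsto\mathcal{D}_\gamma(\sigma)$ is because the linear equations of (\ref*{eq:stratum}) recover the equivalence relation defining $\sigma$, and coarser $\sigma$ imposes more equations, yielding the claimed order-reversal. That $\mathbb{L}_\gamma$ is a lattice is verified by observing that both the meet and join of two $\gamma$-invariant partitions remain $\gamma$-invariant (meet via intersection of equivalence relations; join via its transitive closure, which preserves invariance). The stratification $\mathcal{D}_\gamma=\bigcup_{\sigma>\min\mathbb{L}_\gamma}\mathcal{D}_\gamma(\sigma)$ is immediate from the definition of $\mathcal{D}_\gamma$, and $\mathcal{D}_\gamma(\sigma)\cap\mathcal{D}_\gamma(\sigma')=\mathcal{D}_\gamma(\sigma\vee\sigma')$ follows from the tautology $\Pi(q)\geq\sigma$ and $\Pi(q)\geq\sigma'$ iff $\Pi(q)\geq\sigma\vee\sigma'$, combined with join-stability of $\gamma$-invariance. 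The main technical delicacy is the sufficiency half of the $\mathbb{L}_\gamma$ characterisation, where one must ensure that a generic block-constant choice of $q$ produces no unintended predictive equivalences; this is handled by reducing to a proper-subvariety argument parallel to the degenerate-variety description of Prop. \ref{cor:degvariety}.
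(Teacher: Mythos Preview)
Your argument is correct and its core coincides with the paper's: both establish (\ref{eq:stratum}) by first observing that any $\sigma\in\mathbb{L}_\gamma$ is compatible with the transition function $\gamma$ (your ``$\gamma$-invariance''), and then running the induction on word length using unifilarity. The paper extracts $\gamma$-invariance directly from the existence of a witness $q_0$ with $\Pi(q_0)=\sigma$, whereas you go further and prove the converse---that every $\gamma$-invariant partition arises as some $\Pi(q)$---via a genericity argument. This extra step is not needed for (\ref{eq:stratum}) itself, but it does cleanly justify that $\mathbb{L}_\gamma$ is closed under the partition-lattice join (hence a genuine sublattice), a point the paper asserts prior to the proposition without proof and then uses implicitly when concluding $\mathcal{D}_\gamma(\sigma)\cap\mathcal{D}_\gamma(\sigma')=\mathcal{D}_\gamma(\sigma\vee\sigma')$. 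One minor slip: with the paper's conventions (reverse refinement on $\mathbb{L}_\gamma$, reverse inclusion on the strata), the map $\sigma\mapsto\mathcal{D}_\gamma(\sigma)$ is order-\emph{preserving}, not order-reversing.
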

	\begin{proof}
		The equation expressing $\mathcal{D}_\gamma$ as a union is clear by construction of the $\mathcal{D}_\gamma(\sigma)$. Now fix $\sigma\in\mathbb{L}_\gamma$. It is obvious that
		\begin{equation*}
			\mathcal{D}_\gamma(\sigma)\subset\set{q\in\M_\gamma\subset\R^{\edgeset(\gamma)}}{\forall s\in\sigma,\ j,k\in s,\ \a\in\alphset:\ q^{j,\a}-q^{k,\a}=0}.
		\end{equation*}
		Moreover, we have
		\begin{equation*}
			\forall \a \in\alphset;\ s_1, s_2 \in\sigma;\ j,j'\in s_1:\  \gamma(j,\a)\in s_2 \Leftrightarrow \gamma(j',\a)\in s_2,
		\end{equation*}
		which can be deduced from the fact that there exists $q_0\in\M_\gamma$ with $\sigma=\Pi(q_0)$ and, as internal states of $q_0$, any two members of the same element of $\sigma$ are predictively equivalent.
		It follows by induction on the length of output-words that, for any HMM $q$ in the right-hand side of (\ref*{eq:stratum}), any $j,k\in s\in\sigma$ are predictively equivalent as internal states of $q$. Therefore $q\in\mathcal{D}_\gamma(\sigma)$ which proves (\ref*{eq:stratum}).
		
		It remains to be shown that $\mathbb{L}_\gamma$ is isomorphic to the intersection lattice of the subspace arrangement. The question is whether $\mathcal{D}_\gamma(\sigma\vee \sigma')\subset \mathcal{D}_\gamma(\sigma)\cap \mathcal{D}_\gamma(\sigma')$ is actually an equality. But this follows directly from (\ref*{eq:stratum}).
	\end{proof}

	\subsection{The Relative Entropy Rate Formula}\label{subsec:relentrate}
	
	  \ \vspace{.3cm}
	
		\begin{center}
			\fbox{
				\begin{minipage}{15cm}
					For the remainder of this report, $\gamma\in\dfas_\stateset$ will denote a simple DFA-type and $\M_\gamma$ is considered a subset of the ambient space $\R^{\edgeset(\gamma)}$.
				\end{minipage}
			}
		\end{center}
	
	\vspace{.3cm}
	
	Having set up the \emph{configuration manifold} $\M_\gamma\setminus\mathcal{D}_\gamma$, we shall now turn our attention to the relative entropy rate between the respective output-processes, i.e. the function
	$$\h_\gamma:\ (\M_\gamma\setminus\mathcal{D}_\gamma)^{2}\to\R_{\geq 0}\ ,\quad (q',q)\mapsto \h_\gamma(q'\parallel q):=\mathrm{h}\big(\Pr_{\rvalphra}(q')\parallel \Pr_{\rvalphra}(q)\big).$$
	Existence and smoothness properties of the relative entropy rate restricted to output-processes of specific model classes have been investigated for a long time. Existence is for instance guaranteed in the case of two \emph{generic state-emitting} hidden Markov processes as demonstrated in the seminal papers \cite{baum1966statistical}, \cite{petrie1969probabilistic}. Note that finitary processes never fall in this class except for trivial cases. On the other hand, a particularly well-behaved process-class, which can always be realized by simple unifilar HMMs, is that of ergodic $\alphset$-valued Markov processes. In fact, there is an exact formula computing $\mathrm{h}$ in terms of the Markov chains' transition probabilities (\cite{rached2004kullback}). Obtaining similar exact expressions for more general process classes has proven to be notoriously difficult and seems indeed intractable for general stationary or even (state-emitting) hidden Markov processes (as remarked e.g. in \cite{finesso2010approximation}). However, the specific case of ergodic finitary processes looks promising due a well-known formula for the ordinary entropy rate (see Remark \ref{rem:entropyrate}) which originally goes back to Shannon (\cite{shannon2001mathematical}) and parallels the respective formula for ergodic Markov chains. The obvious guess is that the relative entropy rate formula also generalizes from the Markov to the finitary setting and the following theorem shows this to be true indeed.
	\begin{thm}\label{thm:relentrate}
		Let $\gamma\in\dfas_\stateset$ and $\mathcal{C}$ be a connected component of the configuration manifold $\M_\gamma\setminus\mathcal{D}_\gamma\subset\R^{\edgeset(\gamma)}$. The relative entropy rate pulls back to a (real-)analytic function on $\mathcal{C}\times \mathcal{C}$ given by
		\begin{equation}\label{eq:relentrate}
			 \h_\gamma(q'\parallel q)=\sum_{j\in\stateset}\pi_j(q')\sum_{\a\in\alphset(\gamma, j)} q'^{j,\a} \log\left(\frac{q'^{j,\a}}{q^{j,\a}}\right),
		\end{equation}
		for $\transtup, \transtup'\in\mathcal{C}$.
	\end{thm}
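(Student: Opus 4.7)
My strategy is to lift the computation from the output process $\rvalphra$ to the joint edge process $\rvedgera=\rvstatera\times\rvalphra$. For any $q\in\mathcal{C}$, unifilarity makes $\rvedgera$ an ergodic Markov chain on the edge set $\edgeset(\gamma):=\bigcup_{j}\{j\}\times\alphset(\gamma,j)$, with transition probabilities $T_q[(k,\b)\,|\,(j,\a)]=q^{k,\b}$ whenever $k=\gamma(j,\a)$ (and zero otherwise), and strictly positive stationary distribution $\pi_j(q)\,q^{j,\a}$. Since all elements of $\mathcal{C}$ share the same DFA-type $\gamma$, the edge chains of $q$ and $q'$ are supported on the same subgraph of $\edgeset\times\edgeset$, so the standard closed-form KL-rate formula for ergodic finite Markov chains is applicable; I will deduce \eqref{eq:relentrate} from it and then show that the correction between the edge KL rate and the output KL rate is sublinear.

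The edge-level computation is a direct expectation. Using the Markov factorization of $P'_{\rvedge_{1:L}}$ and stationarity of the $q'$-edge chain,
\[
\log\frac{P'_{\rvedge_{1:L}}}{P_{\rvedge_{1:L}}}=\log\frac{\pi_{\rvstate_1}(q')}{\pi_{\rvstate_1}(q)}+\sum_{l=1}^{L}\log\frac{q'^{\rvstate_l,\rvalph_l}}{q^{\rvstate_l,\rvalph_l}},
\]
and taking expectation under $P'$ yields
\[
D\bigl(P'_{\rvedge_{1:L}}\,\|\,P_{\rvedge_{1:L}}\bigr)=D\bigl(\pi(q')\,\|\,\pi(q)\bigr)+L\sum_{(j,\a)\in\edgeset(\gamma)}\pi_j(q')\,q'^{j,\a}\log\frac{q'^{j,\a}}{q^{j,\a}}.
\]
Dividing by $L$, the first term is $O(1/L)$ and the edge KL rate is $\sum_j\pi_j(q')\sum_{\a\in\alphset(\gamma,j)}q'^{j,\a}\log(q'^{j,\a}/q^{j,\a})$, exactly matching the claimed RHS of \eqref{eq:relentrate}.

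The main technical obstacle is to confirm that the output KL rate coincides with this edge KL rate. By the chain rule of relative entropy,
\[
D\bigl(P'_{\rvedge_{1:L}}\,\|\,P_{\rvedge_{1:L}}\bigr)-D\bigl(P'_{\rvalph_{1:L}}\,\|\,P_{\rvalph_{1:L}}\bigr)=\mathbb{E}_{P'_{\rvalph_{1:L}}}\bigl[D\bigl(P'_{\rvstate_1|\rvalph_{1:L}}\,\|\,P_{\rvstate_1|\rvalph_{1:L}}\bigr)\bigr],
\]
where unifilarity replaces the full smoothing law of $\rvstate_{1:L}$ given $\rvalph_{1:L}$ by that of its initial coordinate $\rvstate_1\in\stateset$ (which deterministically generates $\rvstate_{2:L}$). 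The content of this step is to show that the right-hand side is $o(L)$. I would combine two inputs: first, Shannon--McMillan--Breiman applied to the ergodic Markov edge chain and to the stationary ergodic output process under $P'$, together with the unifilarity identity $H(\rvedge_{1:L})-H(\rvalph_{1:L})=H(\rvstate_1|\rvalph_{1:L})\leq\log|\stateset|$, which forces $\tfrac{1}{L}\log P'(\rvstate_1|\rvalph_{1:L})\to 0$ $P'$-a.s.; second, the asymptotic synchronizability of $q\in\mathcal{C}\subset\M_\gamma\setminus\mathcal{D}_\gamma$ (guaranteed by Prop.~\ref{prop:minimality}) together with the observation that synchronization is a DFA-level phenomenon shared between $q$ and $q'$, giving $\tfrac{1}{L}\log P(\rvstate_1|\rvalph_{1:L})\to 0$ $P'$-a.s.\ as well. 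A uniform-integrability argument then upgrades these to $L^1$-convergence and pins down $o(L)$ for the conditional-KL remainder.

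The analyticity claim is immediate from the explicit formula: each $\pi_j(q')$ is a rational function of $q'$ by Lemma~\ref{lem:stationarystates}, with denominator (a sum of principal minors of $\One-\ubar{q'}$) strictly positive on the simple locus $\M_\gamma$; the coordinates $q^{j,\a}, q'^{j,\a}$ are strictly positive on $\M_\gamma$; and $\log$ is real-analytic on $\R_{>0}$. The composition is thus real-analytic on $\mathcal{C}\times\mathcal{C}$.
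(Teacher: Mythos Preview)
Your edge-process reduction is a clean alternative framing: the chain-rule identity
\[
D\bigl(P'_{\rvedge_{1:L}}\,\|\,P_{\rvedge_{1:L}}\bigr)-D\bigl(P'_{\rvalph_{1:L}}\,\|\,P_{\rvalph_{1:L}}\bigr)=\mathbb{E}_{P'_{\rvalph_{1:L}}}\bigl[D\bigl(P'_{\rvstate_1|\rvalph_{1:L}}\,\|\,P_{\rvstate_1|\rvalph_{1:L}}\bigr)\bigr]
\]
is correct, and your direct computation of the edge KL rate immediately produces the formula. The paper instead works with conditional cross-entropies $\Cr(q'\parallel q)(\rvalph_L\,|\,\rvalph_{1:L-1})$ via a Ces\`aro argument and reduces to showing $\Cr(q'\parallel q)(\rvstate_{L+1}\,|\,\rvalph_{1:L})\to 0$; but unwinding either approach, the hard step is the same: one must control the $q$-posterior on an internal state given $\rvalph_{1:L}$, under $P'$-typical outputs.

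The genuine gap is your justification of that step. The assertion that ``synchronization is a DFA-level phenomenon shared between $q$ and $q'$'' is false in general: \emph{exact} synchronization is combinatorial, but a CSM need only be \emph{asymptotically} synchronizable, and there the dominating posterior state depends on the transition probabilities, not just on $\gamma$. This is precisely where the connected-component hypothesis enters, and the paper spends its main technical lemma on it: one shows, via a compactness-and-continuity argument over a compact connected $\mathcal{K}\subset\M_\gamma\setminus\mathcal{D}_\gamma$ containing both $q$ and $q'$, that there is a single map $\iota_{L+1}:\mathscr{W}_L\to\stateset$ such that $\Pr_{\rvstate_{L+1}|\rvalph_{1:L}}(\tilde q)[\,\iota_{L+1}(\a_{1:L})\,|\,\a_{1:L}]\geq 1-b^L$ uniformly for all $\tilde q\in\mathcal{K}$. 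Without this, you cannot rule out that $P$ and $P'$ synchronize to \emph{different} states on a $P'$-typical word, in which case your remainder term would be $\Theta(L)$, not $o(L)$. A second, smaller issue: your remainder involves the \emph{smoothing} posterior $\rvstate_1\,|\,\rvalph_{1:L}$, whereas asymptotic synchronizability (Def.~\ref{defn:synchronizablehmm}) controls the \emph{filtering} posterior $\rvstate_{L+1}\,|\,\rvalph_{1:L}$; relating the two requires an extra argument (the paper handles this with a separate lemma on words after which the one-step map $j\mapsto\gamma(j,\a_L)$ is injective on the posterior support). Your uniform-integrability claim, finally, is plausible but not substantiated; the paper instead uses the crude bound $-\log\Pr_{\rvstate_{L+1}|\rvalph_{1:L}}(q)\leq LM$ together with the exponentially small complement $\Pr(q')[\rvalph_{1:L}\notin\mathscr{W}_L]\leq Bb^L$ to control the bad set directly.
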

	\begin{proof}[Proof (Outline, Details in Appendix \ref{subsec:proofrelentrate})]
	We consider the cross-entropy
	$$\mathrm{Cr}(q'\parallel q)(\rvalph_{m:n}):= \left\langle -\log \Pr_{\rvalph_{m:n}}(q) \right\rangle_{\Pr_{\rvalph_{m:n}}(q')}$$ The conditional cross-entropy $\mathrm{Cr}(q'\parallel q)(\rvalph_{m:n} | \mathrm{X})$ is defined similarly by averaging $-\log \Pr_{\rvalph_{m:n}|\mathrm{X}}(q)$. Evidently, it is sufficent to show
	$$\lim_{L\to\infty} \frac1L \mathrm{Cr}(q'\parallel q)(\rvalph_{1:L}) = \lim_{L\to\infty} \mathrm{Cr}(q'\parallel q)(\rvalph_{L}|\rvstate_L) = \sum_{j\in\stateset}\pi_j(q')\sum_{\a\in\alphset} q'^{j,\a} \log q^{j,\a}.$$
	While the right-hand equality simply follows from stationarity, establishing the left-hand equality is the difficult part. In a first step one shows by the usual Cesàro-mean argument that
	$$\lim_{L\to\infty} \frac1L \mathrm{Cr}(q'\parallel q)(\rvalph_{1:L}) = \lim_{L\to\infty} \mathrm{Cr}(q'\parallel q)(\rvalph_{L}| \rvalph_{1:L-1})$$
	and the remaining task is to estimate $|\mathrm{Cr}(q'\parallel q)(\rvalph_{L}|\rvstate_L)-\mathrm{Cr}(q'\parallel q)(\rvalph_{L}| \rvalph_{1:L-1})|$. Intuitively, we need to make use of the asymptotic synchronization property, namely, that there is a large subset of sufficiently long output-strings $\a_{1:L-1}$ which constrain the subsequent mixed states $\pi(q)\cdot q^{(\a_{1:L-1})}$ and $\pi(q')\cdot q'^{(\a_{1:L-1})}$ to nearly pure states. The crucial result is that these dominating pure states are in fact identical for $q$ and $q'$ provided they lie in the same connected component of $\M_\gamma\setminus \mathcal{D}_\gamma$. This result extends the non-exact machine synchronization theorem of \cite{travers2011asymptotic} by showing that the latter doesn't just hold at the points $q$ and $q'$ separately but also locally on sufficiently small neighbourhoods inside $\M_\gamma\setminus \mathcal{D}_\gamma$. One then employs a compactness argument to show that it holds globally on the connected component $\mathcal{C}$.
	\end{proof}
	
	\begin{rem}\label{rem:entropyrate}
		Let $\gamma\in\dfas_\stateset$ and $q\in\M_\gamma$. There exists a maximal DFA-type $\gamma<\gamma_0\in\dfas_\stateset$, meaning that the cell $\M_{\gamma_0}$ is top-dimensional in $\M_\stateset$ and that $\M_\gamma\subset\bar{\M}_{\gamma_0}$. Hence there is a connected component $\mathcal{C}$ of $\M_{\gamma_0}\setminus\mathcal{D}_{\gamma_0}$ such that $q\in\bar{\mathcal{C}}$. On the other hand, due to Prop. \ref{prop:subspacearrangement}, the HMM $q_0=(1/|\alphset|)_{e\in\edgeset(\gamma_0)}$ lies in $\bar{\mathcal{C}}\cap\M_{\gamma_0}$. We can extend $\mathrm{h}_{\gamma_0}$ continuously to $\bar{\mathcal{C}}\times(\bar{\mathcal{C}}\cap\M_{\gamma_0})$ and apply Thm. \ref{thm:relentrate} to obtain
			$$\lim_{\stack{\tilde{q}\to q}{\tilde{q}_0\to q_0}}\mathrm{h}\big(\Pr_{\rvalphra}(\tilde{q})\parallel \Pr_{\rvalphra}(\tilde{q}_0)\big) = \mathrm{h}_{\gamma_0}(q\parallel q_0) = \log |\alphset|+ \sum_{j\in\stateset}\pi_j(q)\sum_{\a\in\alphset(\gamma,j)} q^{j,\a} \log q^{j,\a},$$
		where the limit is performed for $\tilde{q}, \tilde{q}_0\in\mathcal{C}$. Up to the constant $\log|\alphset|$, this formula has been known for a long time (\cite{shannon2001mathematical},\cite{travers2011asymptotic}) as minus the ordinary entropy rate of the simple unifilar HMM $q$ with DFA-type $\gamma$. It is now recovered as a limiting case of Thm. \ref{thm:relentrate} being the entropy rate relative to the ``uninformative prior'' machine $q_0$.
	\end{rem}

	\subsection{Differential Geometry and the Entropy Rate Tensor}\label{subsec:Riemanniangeom}
	
	In this section we relate the positive-definite function $\mathrm{h}_\gamma(\cdot\parallel q)$ to its local asymptotic approximation at $q\in\M_\gamma\setminus\mathcal{D}_\gamma$ which equips $\M_\gamma$ with a Riemannian metric $\mathrm{g}$. It will be useful in order to study continuum limits of stochastic processes on $\M_\gamma$, specifically, to asymptotically compute the cumulated relative entropy rates of their trajectories. Namely, assume $(q_m^L)_{0\leq m\leq n_L}, L\in\N,$ is a sequence of such trajectories. Its cumulated relative entropy rates are $\sum_{m=1}^{n_L} \mathrm{h}_\gamma(q^L_m\parallel q^L_{m-1})$. If the trajectories, for $L\to\infty$, approach the image of some curve $c: [0,1]\to \M_\gamma$, the cumulated relative entropy rates tend to
	$$\mathrm{E}(c):=\lim_{\delta\to 0}\sup_{(t_l)\in \mathcal{Z}_\delta} \sum_{l=1}^n \mathrm{h}_\gamma(c(t_l)\parallel c(t_{l-1})),$$
	where $\mathcal{Z}_\delta$ is the set of finite subdivisions $0=t_0<t_1<\ldots < t_n=1,\ n\in\N,$ of the unit interval with $|t_l -t_{l-1}|<\delta,\ l=1,\ldots,n$.	
	To compute this quantity, we need to study the 2nd order local approximation of the smooth function $\mathrm{h}_\gamma(\cdot\parallel q)$. Positive-definiteness guarantees that we have $\d\mathrm{h}_\gamma(\cdot\parallel q)|_{q}=0$ and therefore the following is well-defined:
	\begin{defn}\label{defn:metrictensor}
		\begin{itemize}
			\item[(i)] The \emph{entropy rate tensor} associated to $\mathrm{h}_\gamma$ on $\M_\gamma$ is the section $\mathrm{g}\in\mathrm{Sym}(\mathcal{T}^*(\M_\gamma)\otimes \mathcal{T}^*(\M_\gamma))$ given by
			\begin{align*}
			\mathcal{T}(\M_\gamma) \otimes \mathcal{T}(\M_\gamma) &\to \mathcal{O}(\M_\gamma)\\ 
			(\xi,\eta) &\mapsto \big(q\mapsto (\xi\circ\eta)\big{|}_q \mathrm{h}_\gamma(\cdot\parallel q)\big),
			\end{align*}
			where $\mathcal{O}(\mathcal{M}_\gamma)$ is the algebra of smooth functions on $\mathcal{M}_\gamma$ and $\mathcal{T}(\M_\gamma)$ resp. $\mathcal{T}^*(\M_\gamma)$ are the modules of smooth vector fields resp. differential one-forms.
			\item[(ii)] $\mathrm{E}(c)$ is called the \emph{energy} of the curve $c$ whenever it exists and, for any $q,q'\in\M_\gamma$, we set
			$$\mathrm{E}(q'\parallel q):=\inf_{c}\frac12\int_0^1 \mathrm{g}(c(t))\cdot \dot{c}(t)^{\otimes 2} \d t,$$
			where the infimum is taken over all piecewise smooth curves with endpoints $c(0)=q$ and $c(1)=q'$.
		\end{itemize}
	\end{defn}
	
	\begin{prop}\label{prop:hessianmetricjet}
		The entropy rate tensor $\mathrm{g}$ is a Riemannian metric on $\M_\gamma$. Furthermore:
		\begin{enumerate}
			\item For any $q\in\M_\gamma$, the function $\mathrm{E}(\cdot \parallel q)$ is a 2nd order approximation of $\mathrm{h}_\gamma(\cdot \parallel q)$, i.e.
			$$\mathcal{J}_{q}^{2} \mathrm{E}( \cdot \parallel q) = \mathcal{J}_{q}^{2} \mathrm{h}_\gamma( \cdot \parallel q)\ ,\quad \textnormal{for any } q\in\mathcal{M},$$
			where $\mathcal{J}_{q}^{2}$ stands for the second-order jet at $q$.
			\item For any piecewise smooth curve $c:[0,1]\to\M_\gamma$, we have 
			$$\mathrm{E}(c)=\frac12\int_0^1 \mathrm{g}(c(t))\cdot \dot{c}(t)^{\otimes 2} \d t.$$		
		\end{enumerate}
	\end{prop}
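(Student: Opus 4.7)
My plan is to read all three claims off a single Taylor expansion of the explicit formula in Thm.~\ref{thm:relentrate}. Parametrizing tangent vectors $v\in\mathcal{T}_q\M_\gamma$ by their ambient components $(v^{j,\a})_{(j,\a)\in\edgeset(\gamma)}$ subject to $\sum_{\a}v^{j,\a}=0$ for every $j\in\stateset$, I would expand $q'^{j,\a}\log(q'^{j,\a}/q^{j,\a})$ in $v^{j,\a}=q'^{j,\a}-q^{j,\a}$ using $\log(1+x)=x-x^2/2+O(x^3)$. The linear part of the inner sum is $\sum_\a v^{j,\a}=0$, so
\begin{equation*}
\sum_{\a\in\alphset(\gamma,j)} q'^{j,\a}\log\!\left(\tfrac{q'^{j,\a}}{q^{j,\a}}\right)=\tfrac12\sum_{\a\in\alphset(\gamma,j)}\frac{(v^{j,\a})^2}{q^{j,\a}}+O(|v|^3).
\end{equation*}
Because this inner sum already vanishes to first order, the linear variation of the multiplicative prefactor $\pi_j(q')=\pi_j(q)+O(|v|)$ cannot enter at second order. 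Summing over $j$ therefore yields
\begin{equation*}
\mathrm{h}_\gamma(q+v\parallel q)=\tfrac12\,\mathrm{g}(q)\cdot v^{\otimes 2}+O(|v|^3), \qquad \mathrm{g}(q)\cdot v^{\otimes 2}=\sum_{j\in\stateset}\pi_j(q)\sum_{\a\in\alphset(\gamma,j)}\frac{(v^{j,\a})^2}{q^{j,\a}}.
\end{equation*}
Since $\pi_j(q)>0$ by Lem.~\ref{lem:stationarystates} and $q^{j,\a}>0$ on $\M_\gamma$, this quadratic form is strictly positive on any nonzero tangent vector, so $\mathrm{g}$ is a genuine Riemannian metric. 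That settles the first assertion.

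For statement~(1), I would use that the infimum defining $\mathrm{E}(\cdot\parallel q)$ is, by the standard Cauchy--Schwarz / reparametrization argument in Riemannian geometry, equal to $\tfrac12 d_\mathrm{g}(q,\cdot)^2$, where $d_\mathrm{g}$ is the Riemannian distance induced by $\mathrm{g}$. In any chart centered at $q$ one has $d_\mathrm{g}(q,q')^2=\mathrm{g}(q)\cdot(q'-q)^{\otimes 2}+O(|q'-q|^3)$, so $\mathrm{E}(q'\parallel q)=\tfrac12\mathrm{g}(q)\cdot(q'-q)^{\otimes 2}+O(|q'-q|^3)$. Combining with the expansion above, $\mathrm{E}(\cdot\parallel q)$ and $\mathrm{h}_\gamma(\cdot\parallel q)$ share the value $0$, the vanishing first derivative, and the Hessian $\mathrm{g}(q)$, so their second-order jets at $q$ coincide.

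For statement~(2), I would first upgrade the expansion of the first paragraph to a uniform bound: exploiting compactness of $c([0,1])\subset\M_\gamma\setminus\mathcal{D}_\gamma$ together with real-analyticity of $\mathrm{h}_\gamma$ from Thm.~\ref{thm:relentrate}, there exist $C,\epsilon>0$ such that
\begin{equation*}
\bigl|\mathrm{h}_\gamma(q'\parallel q)-\tfrac12\mathrm{g}(q)\cdot(q'-q)^{\otimes 2}\bigr|\le C|q'-q|^3
\end{equation*}
whenever $q$ lies in a fixed neighborhood of $c([0,1])$ and $|q'-q|\le\epsilon$. Combined with the Taylor expansion $c(t_l)-c(t_{l-1})=\dot c(t_{l-1})(t_l-t_{l-1})+O((t_l-t_{l-1})^2)$ (valid on each smooth piece), each summand $\mathrm{h}_\gamma(c(t_l)\parallel c(t_{l-1}))$ reduces to $\tfrac12\mathrm{g}(c(t_{l-1}))\cdot\dot c(t_{l-1})^{\otimes 2}(t_l-t_{l-1})^2$ plus an error controlled by $\|\dot c\|_\infty^3(t_l-t_{l-1})^3$, so that summing over the partition and passing to $\delta\to 0$ yields the claimed integral via Riemann-sum convergence. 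The main obstacle is that statement~(2) demands the limit of a \emph{supremum} over all partitions in $\mathcal{Z}_\delta$, not merely along a single convergent sequence; I would handle this by showing the supremum is attained up to $o(1)$ by equispaced partitions, using the uniform cubic error bound to check that no non-uniform refinement can inflate the sum by more than a $o(1)$ amount as $\delta\to 0$.
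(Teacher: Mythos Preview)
Your arguments for positive-definiteness and for statement~(i) are correct and more explicit than the paper's: the paper deduces positive-definiteness from $\mathrm{h}_\gamma(\cdot\parallel q)$ having an isolated minimum at $q$ and handles~(i) through the exponential map, whereas you read both directly off the Taylor expansion of the formula in Thm.~\ref{thm:relentrate}. Both routes are valid, and yours has the advantage of exhibiting the coordinate expression of $\mathrm{g}$ (Cor.~\ref{cor:entropyratemetric}) as a by-product.

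For statement~(ii) there is a genuine gap. Your expansion $\mathrm{h}_\gamma(c(t_l)\parallel c(t_{l-1}))=\tfrac12\,\mathrm{g}(c(t_{l-1}))\cdot\dot c(t_{l-1})^{\otimes 2}\,(t_l-t_{l-1})^2+O\bigl((t_l-t_{l-1})^3\bigr)$ is correct, but the leading term is \emph{quadratic} in the increment, not linear. Consequently $\sum_l \tfrac12\,\mathrm{g}\cdot\dot c^{\otimes 2}\,(t_l-t_{l-1})^2$ is not a Riemann sum for $\tfrac12\int_0^1 \mathrm{g}\cdot\dot c^{\otimes 2}\,\d t$; it is bounded above by $\delta$ times such a Riemann sum and therefore tends to $0$ as $\delta\to 0$. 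Taking the supremum over $\mathcal{Z}_\delta$ does not rescue this, since the same $O(\delta)$ bound applies uniformly. The paper's proof proceeds differently---it replaces each $\mathrm{h}_\gamma(c(t_l)\parallel c(t_{l-1}))$ by the geodesic energy $\mathrm{E}(c(t_l)\parallel c(t_{l-1}))$ via~(i) and then compares the latter to $\tfrac12\int_{t_{l-1}}^{t_l}\mathrm{g}(c)\cdot\dot c^{\otimes 2}\,\d s$ through a constant-speed geodesic $c_{l-1}:[t_{l-1},t_l]\to\M_\gamma$---but since $\mathrm{E}(c(t_l)\parallel c(t_{l-1}))=\tfrac12\,d_\mathrm{g}(c(t_{l-1}),c(t_l))^2$ is again $O\bigl((t_l-t_{l-1})^2\bigr)$ while the subinterval energy $\tfrac12\int_{t_{l-1}}^{t_l}\mathrm{g}\cdot\dot c^{\otimes 2}\,\d s$ is $O(t_l-t_{l-1})$, the same scale mismatch appears there as well (the tacit reparametrization of the minimizing geodesic from $[0,1]$ to $[t_{l-1},t_l]$ rescales its energy by $(t_l-t_{l-1})^{-1}$). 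The obstruction therefore seems to lie in the definition of $\mathrm{E}(c)$ itself rather than in your strategy: with $\mathrm{h}_\gamma$ vanishing to second order on the diagonal, the cumulated sum $\sum_l\mathrm{h}_\gamma(c(t_l)\parallel c(t_{l-1}))$ over any partition in $\mathcal{Z}_\delta$ is $O(\delta)$ for every piecewise smooth $c$.
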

	\begin{proof}
		See Appendix \ref{subsec:proofhessianmetricjet}.
	\end{proof}
	We are now going to use Thm. \ref{thm:relentrate} to derive an explicit expression for $\mathrm{g}$ in terms of the ambient coordinates of $\R^{\edgeset(\gamma)}$. For this purpose, we view $\M_\gamma$ as a submersed submanifold via
	$$\R^{\edgeset(\gamma)}_+\to \R^\stateset\ ,\quad (q^{j,\a})\mapsto (q^j)_{j\in\stateset}:=\left(\sum_{\a\in\alphset(\gamma,j)} q^{j,\a} \right)_{j\in\stateset}$$
	at the regular value $(1,\ldots,1)$. The modules of smooth vector fields resp. differential one-forms can be expressed as
	\begin{align*}
		\mathcal{T}(\M_\gamma) &= \bigcap_{j\in \stateset} \mathrm{ker} (\d q^j)\\
		\mathcal{T}^*(\M_\gamma) &= \bigoplus_{(j,\a )\in\edgeset(\gamma)} \mathcal{O}(\M_\gamma)\cdot \d q^{j,\a}\ \bigg/ \ \bigoplus_{j\in\stateset} \mathcal{O}(\M_\gamma)\cdot \d q^j.
	\end{align*}
	By slight abuse of notation we will denote the equivalence class of $\d q^{j,\a}$ in $\mathcal{T}^*(\M_\gamma)$ also by $\d q^{j,\a}$.
	The Riemannian metric $\mathrm{g}$ induces a module-isomorphism
	$$^\flat:\ \mathcal{T}(\M_\gamma) \to \mathcal{T}^*(\M_\gamma), \ \xi\mapsto \mathrm{g}\cdot (\xi\otimes \cdot)$$
	called the \emph{index-lowering isomorphism}. Its inverse is called the \emph{index-raising isomorphism} $^\sharp$.
	\begin{cor}\label{cor:entropyratemetric}
		Let $\gamma\in\dfas_\stateset$. The entropy rate tensor $\mathrm{g}$ on $\M_\gamma$ has the ambient coordinate expression
		$$\mathrm{g}(q) = \sum_{(j,\a)\in\edgeset(\gamma)} \frac{\pi_j(q)}{q^{j,\a}}\ (\d q^{j,\a})^{\otimes 2}$$
		and the associated index-raising isomorphism is given by
		$$^\sharp:\ \eta \mapsto \sum_{(j,\a)\in\edgeset(\gamma)}\frac{q^{j,\a}}{\pi_j(q)} \left(\eta\cdot\frac{\partial}{\partial q^{j,\a}}- \eta\cdot\left(\sum_{\mathsf{b}\in\alphset(\gamma,j)} q^{j,\mathsf{b}} \frac{\partial}{\partial q^{j,\mathsf{b}}}\right)\right)\ \frac{\partial}{\partial q^{j,\a}}.$$
	\end{cor}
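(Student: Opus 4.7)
The plan is to compute $\mathrm{g}(q) = (\d \otimes \d)\big|_{q'=q} \mathrm{h}_\gamma(q' \parallel q)$ directly from the closed-form expression provided by Thm. \ref{thm:relentrate} by performing a 2nd-order Taylor expansion around the diagonal, and then to verify the stated formula for $^\sharp$ by direct computation, checking both that $\eta^\sharp$ respects the tangent-space constraints and that $\mathrm{g}\cdot(\eta^\sharp\otimes\zeta)=\eta\cdot\zeta$ for all $\zeta\in\mathcal{T}(\M_\gamma)$.

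Set $\xi^{j,\a}:=q'^{j,\a}-q^{j,\a}$; since $q,q'\in\M_\gamma$, the constraints $\sum_{\a\in\alphset(\gamma,j)} q^{j,\a}=\sum_{\a\in\alphset(\gamma,j)} q'^{j,\a}=1$ imply $\sum_{\a}\xi^{j,\a}=0$ for each $j$, so $\xi$ represents a tangent vector in $\mathcal{T}(\M_\gamma)$ via the ambient embedding. Expanding the inner sum of Thm. \ref{thm:relentrate} as a standard KL-divergence expansion,
\begin{equation*}
\sum_{\a} q'^{j,\a}\log\frac{q'^{j,\a}}{q^{j,\a}}=\sum_{\a}\xi^{j,\a}+\tfrac{1}{2}\sum_{\a}\frac{(\xi^{j,\a})^2}{q^{j,\a}}+O(|\xi|^3),
\end{equation*}
the linear term vanishes due to $\sum_\a\xi^{j,\a}=0$, so the inner sum is already $O(|\xi|^2)$. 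Therefore, in the product $\pi_j(q')\sum_\a q'^{j,\a}\log(q'^{j,\a}/q^{j,\a})$ the (linear) variation of $\pi_j$ with $q'$ only contributes at order $|\xi|^3$ and we may replace $\pi_j(q')$ by $\pi_j(q)$. Summing over $j$ yields $\mathrm{h}_\gamma(q'\parallel q)=\tfrac12\sum_{(j,\a)\in\edgeset(\gamma)}\frac{\pi_j(q)}{q^{j,\a}}(\xi^{j,\a})^2+O(|\xi|^3)$, which by Prop. \ref{prop:hessianmetricjet}.(i) identifies $\mathrm{g}(q)$ with the stated quadratic form on $\mathcal{T}(\M_\gamma)$. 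Since only differences $\d q^{j,\a}$ satisfying $\sum_\a \d q^{j,\a}=0$ act non-trivially on $\mathcal{T}(\M_\gamma)$, the expression $\sum_{(j,\a)}\frac{\pi_j(q)}{q^{j,\a}}(\d q^{j,\a})^{\otimes 2}$ descends to a well-defined section of $\mathrm{Sym}(\mathcal{T}^*(\M_\gamma)^{\otimes 2})$.

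For the index-raising isomorphism, let $\eta\in\mathcal{T}^*(\M_\gamma)$ be represented by $\sum_{(j,\a)}\eta_{j,\a}\d q^{j,\a}$ with $\eta_{j,\a}:=\eta\cdot\frac{\partial}{\partial q^{j,\a}}$, and define $\xi:=\eta^\sharp$ by the formula in the statement, i.e.\ $\xi^{j,\a}=\frac{q^{j,\a}}{\pi_j(q)}\bigl(\eta_{j,\a}-\sum_{\mathsf{b}\in\alphset(\gamma,j)} q^{j,\mathsf{b}}\,\eta_{j,\mathsf{b}}\bigr)$. I will check three things: (a) \emph{well-definedness on the quotient}: adding $f_j\d q^j=\sum_\a f_j\d q^{j,\a}$ to $\eta$ shifts every $\eta_{j,\a}$ by $f_j$, and the bracket $\eta_{j,\a}-\sum_\mathsf{b} q^{j,\mathsf{b}}\eta_{j,\mathsf{b}}$ becomes $f_j-(\sum_\mathsf{b} q^{j,\mathsf{b}})f_j=0$, so $\xi$ is unchanged; (b) \emph{tangency}: $\sum_\a\xi^{j,\a}=\frac{1}{\pi_j(q)}\bigl(\sum_\a q^{j,\a}\eta_{j,\a}-(\sum_\a q^{j,\a})\sum_\mathsf{b} q^{j,\mathsf{b}}\eta_{j,\mathsf{b}}\bigr)=0$, so $\xi\in\mathcal{T}(\M_\gamma)$; (c) \emph{duality}: for any $\zeta\in\mathcal{T}(\M_\gamma)$, plugging $\xi$ into the ambient quadratic form gives
\begin{equation*}
\mathrm{g}\cdot(\xi\otimes\zeta)=\sum_{(j,\a)}\bigl(\eta_{j,\a}-\sum_\mathsf{b} q^{j,\mathsf{b}}\eta_{j,\mathsf{b}}\bigr)\zeta^{j,\a}=\sum_{(j,\a)}\eta_{j,\a}\zeta^{j,\a}-\sum_{j}\Bigl(\sum_\mathsf{b} q^{j,\mathsf{b}}\eta_{j,\mathsf{b}}\Bigr)\underbrace{\sum_\a\zeta^{j,\a}}_{=0}=\eta\cdot\zeta,
\end{equation*}
which is precisely the defining property of the index-raising isomorphism.

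The main subtlety, and the reason the proof is more than a one-line differentiation, lies in the constrained nature of the geometry: one must carefully distinguish the ambient quadratic form (written in the coordinates $q^{j,\a}$) from its restriction/push-down to the intrinsic tangent--cotangent modules on $\M_\gamma$, and the verification of well-definedness on the quotient $\mathcal{T}^*(\M_\gamma)=\bigoplus\mathcal{O}\cdot\d q^{j,\a}/\bigoplus\mathcal{O}\cdot\d q^j$ is where the specific shape of the asymmetric bracket $\eta_{j,\a}-\sum_\mathsf{b} q^{j,\mathsf{b}}\eta_{j,\mathsf{b}}$ becomes essential. Apart from that, everything reduces to the standard Pythagorean expansion of the KL divergence together with the observation that variations of the stationary distribution $\pi_j$ only enter at third order at the diagonal.
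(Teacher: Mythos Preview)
Your proposal is correct and is precisely the straightforward computation the paper alludes to in its one-line proof. You have spelled out the Taylor expansion of the inner KL sum, the key observation that the $\pi_j(q')$-variation only enters at third order, and the three verifications (quotient well-definedness, tangency, duality) needed for $^\sharp$; there is nothing to add or correct.
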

	\begin{proof}
	These formulae can be shown by straightforward computations.
	\end{proof}
	
	\begin{rem}\label{rem:coordinatechange}
		Under the ambient coordinate change $q^e(z)=(z^e)^2$, $e\in\edgeset(\gamma)$, the entropy rate tensor changes into
		$$\mathrm{g}=\sum_{(j,\a)\in\edgeset(\gamma)} \pi_j(q(z))\ (\d z^{j,\a})^{\otimes 2},$$
		which reveals that $\mathrm{g}$ can in fact be continuously extended to the partially closed faces $\bar{\M}_\gamma\cap \M_\stateset$ equipping them with the length metric $\sqrt{2 \mathrm{E}(\cdot\parallel\cdot)}$. This observation could be used to turn all of $\M_\stateset$ into a length-metric space by composing paths across different faces. There is however a more elegant way to obtain that result by extending $\mathrm{g}$ itself to a global Riemannian metric on all of $\M_\stateset$. The procedure involves a suitable ramified differentiable structure to handle transitions between different faces and will be presented in a separate note.
	\end{rem}
	
	\subsection{The Process-Replicator Equation}\label{subsec:processreplicator}
	
	We shall now see that the entropy rate tensor may be used to formally extend the replicator equation (\ref{eq:classicalreplicator}) to the setting of finitary processes. It is well-known that (\ref{eq:classicalreplicator}) can be expressed in a coordinate-free manner on the open simplex $\Sigma^\alphset$, equipped with the Fisher metric $\mathrm{g}=\sum_{\a} \frac{1}{q^\a} (\d q^\a)^{\otimes 2}$. Explicitly, one collects the selection functions into a differential one-form $\phi:=\sum_{\a\in\alphset} f_\a \d q^\a$ and rewrites (\ref{eq:classicalreplicator}) as
	\begin{equation}\label{eq:replicatornonconservative}
	\dot{q} = \phi^\sharp.
	\end{equation}
	where $^\sharp$ is the index-raising isomorphism, induced by the Fisher metric.\footnote{All these are well-known facts from information geometry, where $\mathrm{g}$ is called the Fisher information metric (\cite{cover2012elements},\cite{amari2007methods}), and theoretical biology, where it sometimes goes under the name Shahshahani metric (\cite{shahshahani1979new}, \cite{akin1990differential}).}  We will be particularly interested in the case of a closed selection form, i.e. $\d \phi =0$. In this case, there exists a potential\footnote{An example is the linear selection case from evolutionary game theory which leads to the potential $\Phi(q)=\frac12 \sum_{\a, \mathsf{b}}  r_{\a\mathsf{b}}q^\a q^{\mathsf{b}}$.} $\Phi: \Sigma^\alphset \to \R$ with $\phi = \d\Phi$ and equation (\ref*{eq:replicatornonconservative}) reads
	$$\dot{q}= \nabla_\mathrm{g} \Phi,$$
	where $\nabla_\mathrm{g}\Phi=\d\Phi^\sharp$ is the Riemannian gradient of $\Phi$ with respect to the Fisher metric. Note that the latter is in fact just the entropy rate tensor on $\M_\alpha=\Sigma^\alphset$ if $\alpha$ is taken to be the DFA-type from Fig. \ref{fig:1statedfa}.
	This suggests the following generalization:
	\begin{defn}
		Let $\mathcal{C}\subset \M_\gamma$ be an open subset. A \emph{fitness potential} on $\mathcal{C}$ is a function $\Phi: \Pr_{\rvalphra}(\mathcal{C})\to\R$ such that $(\Phi\circ\Pr_{\rvalphra})|_{\mathcal{C}}$ is continuously differentiable. The associated \emph{process-replicator equation} is the differential equation
		$$\dot{q}=\nabla_\mathrm{g}(\Phi\circ\Pr_{\rvalphra}).$$
	\end{defn}
	
	\begin{cor}\label{cor:procrepeq}
		Let $\mathcal{C}\subset\M_\gamma$ be open, $\Phi$ be a fitness potential on $\mathcal{C}$ and denote $f_e:=\frac{\partial}{\partial q^e}(\Phi\circ\Pr_{\rvalphra})$, $e\in\edgeset(\gamma)$. The associated process-replicator equation can be written in the ambient coordinates as
		$$\dot{q}^{j,\a} = \frac{q^{j,\a}}{\pi_j(q)}\left(f_{j,\a}(q)-\bar{f}_j(q)\right)\ ,\quad \text{for } (j,\a)\in\edgeset(\gamma),$$
		where $\bar{f}_j(q)=\sum_{\a\in\alphset(\gamma,j)} q^{j,\a} f_{j,\a}(q)$.
		\qed
	\end{cor}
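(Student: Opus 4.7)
The plan is to read off the process-replicator equation directly from the explicit index-raising formula in Corollary \ref{cor:entropyratemetric}. First I would write the differential of the potential, viewed as an element of $\mathcal{T}^*(\M_\gamma)$, using any smooth extension of $\Phi\circ\Pr_{\rvalphra}$ to an open neighborhood of $\M_\gamma$ inside $\R^{\edgeset(\gamma)}$ (such an extension is provided automatically by the fact that $\Pr_{\rvalphra}(q)$ is a rational function of the entries of $q$). This yields $\eta := \d(\Phi\circ\Pr_{\rvalphra}) = \sum_{e\in\edgeset(\gamma)} f_e \, \d q^e$ in $\mathcal{T}^*(\M_\gamma)$, and by definition $\nabla_\mathrm{g}(\Phi\circ\Pr_{\rvalphra}) = \eta^\sharp$.

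Next I would substitute into the index-raising formula of Corollary \ref{cor:entropyratemetric}, which requires computing the two natural pairings of $\eta$ appearing there. For the first, $\eta\cdot\frac{\partial}{\partial q^{j,\a}} = f_{j,\a}$ directly. For the second,
\[
\eta\cdot\Bigl(\sum_{\mathsf{b}\in\alphset(\gamma,j)} q^{j,\mathsf{b}} \frac{\partial}{\partial q^{j,\mathsf{b}}}\Bigr) = \sum_{\mathsf{b}\in\alphset(\gamma,j)} q^{j,\mathsf{b}} f_{j,\mathsf{b}} = \bar{f}_j(q).
\]
Feeding these two values back into the expression for $^\sharp$ gives
\[
\nabla_\mathrm{g}(\Phi\circ\Pr_{\rvalphra}) \;=\; \sum_{(j,\a)\in\edgeset(\gamma)} \frac{q^{j,\a}}{\pi_j(q)}\bigl(f_{j,\a}(q) - \bar{f}_j(q)\bigr) \, \frac{\partial}{\partial q^{j,\a}},
\]
and equating this vector field with $\dot{q}$ yields the claimed ambient-coordinate ODE, one component at a time.

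The only nontrivial point to verify — and I expect this to be the main bookkeeping obstacle rather than a serious difficulty — is that $f_{j,\a} - \bar{f}_j$ is independent of the chosen ambient extension of $\Phi\circ\Pr_{\rvalphra}$, so that the right-hand side is well-defined on $\mathcal{C}$. Two extensions differ by a function vanishing on $\M_\gamma$, hence their differentials differ by an $\mathcal{O}(\M_\gamma)$-linear combination of the constraint one-forms $\d q^j = \sum_{\a\in\alphset(\gamma,j)} \d q^{j,\a}$, which represent zero in $\mathcal{T}^*(\M_\gamma)$. Adding such a term shifts each $f_{j,\a}$ (for fixed $j$) by a common function $g_j$, and simultaneously shifts $\bar{f}_j$ by $g_j \sum_{\a\in\alphset(\gamma,j)} q^{j,\a} = g_j \cdot q^j = g_j$; the difference $f_{j,\a} - \bar{f}_j$ is therefore invariant. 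This confirms both the intrinsic meaning of the formula and its coordinate expression.
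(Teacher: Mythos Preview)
Your proposal is correct and follows exactly the route the paper intends: the corollary is stated with a bare \qed because it is immediate from applying the index-raising isomorphism of Corollary \ref{cor:entropyratemetric} to $\eta=\d(\Phi\circ\Pr_{\rvalphra})$, which is precisely what you do. Your additional check that $f_{j,\a}-\bar f_j$ is independent of the ambient extension is a welcome clarification but not a different argument.
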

	In evolutionary game theory, the concept of evolutionary stable states has proven useful to analyze local stability properties of the replicator dynamics. A configuration $q\in\Sigma^\alphset$ is called locally evolutionary stable iff it has a neighbourhood $\mathcal{U}$ such that, for any $q'\in\mathcal{U}$, the restriction of the replicator dynamics to the affine line segment between $q$ and $q'$ has a unique asymptotically-stable rest point at $q$, which is the case iff
	\begin{equation}\label{eq:evolutionarystablestates}
		\sum_{\a\in\alphset} q^\a f_\a(q') > \sum_{\a\in\alphset} q'^\a f_\a(q').
	\end{equation}
	This condition has the intuitive meaning that, if a population is comprised of two types of coherent subpopulations in configurations $q$ resp. $q'$ (which cannot modifiy their internal compositions), then a small enough quantity of $q'$-subpopulations will eventually die out in a $q$-dominated population (see e.g. \cite{taylor1978evolutionary}). The \emph{folk theorem} of evolutionary game theory states that a locally evolutionary stable configuration is also an asymptotically-stable rest point for the replicator equation.\footnote{Note that the converse is not true since the replicator equation can have asymptotically-stable rest points which are not evolutionary stable (\cite{hofbauer2003evolutionary}).} We can now extend the folk theorem to ergodic finitary processes:
	\begin{cor}
		Let $\mathcal{C}\subset\M_\gamma$ be open, $\Phi$ be a fitness potential on $\mathcal{C}$ and denote $f_e:=\frac{\partial}{\partial q^e}(\Phi\circ\Pr_{\rvalphra})$, $e\in\edgeset(\gamma)$. If some $q\in\mathcal{C}$ has a neighbourhood $\mathcal{U}\subset\mathcal{C}$ such that any $q'\in\mathcal{U}\setminus\{q\}$ satisfies
		$$\sum_{\a \in\alphset(\gamma,j)} q^{j,\a} f_{j,\a}(q') > \bar{f}_j(q')\ ,\quad \text{for all } j\in\stateset,$$
		then $q$ is an asymptotically stable rest point for the process-replicator dynamics.		
	\end{cor}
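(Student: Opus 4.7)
The plan is to build a strict Lyapunov function from the relative-entropy-rate formula of Thm.~\ref{thm:relentrate}. I assume $q\in\M_\gamma\setminus\mathcal{D}_\gamma$ (otherwise one first passes to the quotient DFA-type of Prop.~\ref{prop:minimality}.(ii)) and shrink $\mathcal{U}$ so it lies in a single connected component of $\mathcal{C}\cap(\M_\gamma\setminus\mathcal{D}_\gamma)$. Define $V:\mathcal{U}\to\R$, $V(q'):=\h_\gamma(q\parallel q')$. By Thm.~\ref{thm:relentrate}, $V$ is analytic and
\begin{equation*}
V(q')=\sum_{j\in\stateset}\pi_j(q)\sum_{\a\in\alphset(\gamma,j)}q^{j,\a}\log\frac{q^{j,\a}}{q'^{j,\a}}.
\end{equation*}
Each inner sum is the ordinary Kullback-Leibler divergence between the probability distributions $(q^{j,\a})_{\a\in\alphset(\gamma,j)}$ and $(q'^{j,\a})_{\a\in\alphset(\gamma,j)}$, hence non-negative with equality iff $q^{j,\a}=q'^{j,\a}$ for all $\a$. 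Combined with $\pi_j(q)>0$ from Lem.~\ref{lem:stationarystates}, this yields $V\geq 0$ with unique zero at $q'=q$.

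Next I verify that $q$ is a rest point of the process-replicator equation. Any $v\in\mathcal{T}_q\M_\gamma$ satisfies $\sum_{\a\in\alphset(\gamma,j)}v^{j,\a}=0$ for every $j$, by the parametrization preceding Cor.~\ref{cor:entropyratemetric}. Applying the hypothesis to $q'=q+\epsilon v\in\mathcal{U}\setminus\{q\}$ for small nonzero $\epsilon$ rewrites as $-\epsilon\sum_{\a}v^{j,\a}f_{j,\a}(q+\epsilon v)>0$ for every $j\in\stateset$. Letting $\epsilon\to 0^+$ and $\epsilon\to 0^-$ forces $\sum_{\a}v^{j,\a}f_{j,\a}(q)=0$ for every tangent direction $v$, so $f_{j,\a}(q)=\bar f_j(q)$ for all $(j,\a)\in\edgeset(\gamma)$. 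By Cor.~\ref{cor:procrepeq}, $\dot q|_q=0$.

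The key computation is the time derivative of $V$ along a trajectory $q'(t)$ of the process-replicator equation. Ambient-coordinate differentiation of the closed form above, followed by substitution of Cor.~\ref{cor:procrepeq}, gives
\begin{equation*}
\frac{\d}{\d t}V(q'(t))=-\sum_{j\in\stateset}\frac{\pi_j(q)}{\pi_j(q'(t))}\sum_{\a\in\alphset(\gamma,j)}q^{j,\a}\bigl(f_{j,\a}(q'(t))-\bar f_j(q'(t))\bigr).
\end{equation*}
The face-constraint $\sum_{\a\in\alphset(\gamma,j)}q^{j,\a}=1$ defining $\M_\gamma\subset\R^{\edgeset(\gamma)}$ allows one to pull the $\bar f_j(q'(t))$ out of the inner sum, collapsing it to $\sum_{\a}q^{j,\a}f_{j,\a}(q'(t))-\bar f_j(q'(t))$, which is strictly positive on $\mathcal{U}\setminus\{q\}$ by hypothesis. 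Since $\pi_j(q)/\pi_j(q'(t))>0$ by Lem.~\ref{lem:stationarystates}, we get $\dot V(q'(t))<0$ on $\mathcal{U}\setminus\{q\}$. Together with $V$ being continuous, non-negative, and vanishing only at $q$, the standard strict-Lyapunov theorem yields asymptotic stability of $q$.

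The main obstacle is really only the telescoping step in which the evolutionary-stability hypothesis is matched to $\dot V$; it crucially hinges on the face-constraint $\sum_{\a}q^{j,\a}=1$ being respected by the reference point $q$, so that the weighted combinations in $\dot V$ reproduce exactly the left-hand sides of the hypothesis. The non-degeneracy of $V$ falls out transparently from the form of Thm.~\ref{thm:relentrate} as a positively-weighted sum of ordinary KL divergences, so no separate appeal to essential uniqueness of CSMs is needed; and the rest-point verification via tangent-direction limits is the only part requiring slightly delicate bookkeeping.
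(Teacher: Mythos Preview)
Your proof is correct and matches the paper's approach exactly: both use $\mathrm{h}_\gamma(q\parallel\cdot)$ as a strict local Lyapunov function, and you have simply spelled out the computation the paper leaves as ``straightforward to verify''. The restriction to $\M_\gamma\setminus\mathcal{D}_\gamma$ and the separate rest-point check are harmless extras---the explicit formula from Thm.~\ref{thm:relentrate} defines a valid Lyapunov function on all of $\M_\gamma$ regardless of whether it coincides with the relative entropy rate there, and the strict-Lyapunov conclusion already forces $q$ to be an equilibrium.
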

	\begin{proof}
		It is straightforward to verify that the condition makes $\mathrm{h}_\gamma(q\parallel \cdot)$ into a local Lyapunov-function with isolated minimum at $q$.
	\end{proof}

	The situation where the fitness potential is given by the negative relative entropy rate to some goal process $\Pr_{\rvalphra}(q_\infty)$ is of fundamental interest in statistical estimation theory (\cite{baum1966statistical},\cite{finesso2010approximation}). It yields a particularly simple process-replicator equation:
\begin{cor}\label{cor:relentprocessrepl}
	Let $\mathcal{C}$ be a connected component of $\M_\gamma\setminus\mathcal{D}_\gamma$ and $q_\infty\in\mathcal{C}$. The process-replicator equation on $\mathcal{C}$ with fitness potential $-\mathrm{h}\big(\Pr_{\rvalphra}(q_\infty)\parallel \Pr_{\rvalphra}(\cdot)\big)$ has the following ambient coordinate expression:
	$$\dot{q}^{j,\a} = \frac{\pi_j(q_\infty)}{\pi_j(q)}(q^{j,\a}_\infty-q^{j,\a})\ ,\quad (j,\a)\in\edgeset(\gamma).$$
\end{cor}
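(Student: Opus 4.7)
The plan is to apply Corollary \ref{cor:procrepeq} to the specific fitness potential $\Phi\circ\Pr_{\rvalphra}(q) = -\h_\gamma(q_\infty\parallel q)$, using the exact formula from Theorem \ref{thm:relentrate} to compute the ambient coordinate derivatives $f_{j,\a}=\partial(\Phi\circ\Pr_{\rvalphra})/\partial q^{j,\a}$ directly. No new analytic input is needed: the entire result should fall out of a short symbolic calculation.

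First I would expand the fitness potential using Theorem \ref{thm:relentrate}:
$$-\h_\gamma(q_\infty\parallel q) = \sum_{j\in\stateset}\pi_j(q_\infty)\sum_{\a\in\alphset(\gamma,j)} q_\infty^{j,\a}\log q^{j,\a}\ +\ \text{const}(q_\infty),$$
where the constant collects all terms depending solely on $q_\infty$. The key observation is that the stationary weights $\pi_j(q_\infty)$ and the transition coefficients $q_\infty^{j,\a}$ are evaluated at the fixed target $q_\infty$, hence are constants with respect to the variable $q$. So the only $q$-dependence in the potential comes through the $\log q^{j,\a}$ factors.

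Differentiating in the ambient coordinates then gives
$$f_{j,\a}(q) = \frac{\pi_j(q_\infty)\,q_\infty^{j,\a}}{q^{j,\a}}\ ,\qquad (j,\a)\in\edgeset(\gamma).$$
Next I compute the fibrewise average $\bar f_j(q)=\sum_{\a\in\alphset(\gamma,j)} q^{j,\a} f_{j,\a}(q)$: the $q^{j,\a}$ factor cancels the denominator of $f_{j,\a}$, and since $q_\infty\in\M_\gamma$ satisfies $\sum_{\a\in\alphset(\gamma,j)} q_\infty^{j,\a}=1$, this collapses to $\bar f_j(q)=\pi_j(q_\infty)$, a function of $j$ only. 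Substituting into the form of the process-replicator equation supplied by Corollary \ref{cor:procrepeq}, the outer factor $q^{j,\a}/\pi_j(q)$ absorbs the remaining $1/q^{j,\a}$ from $f_{j,\a}$, giving
$$\dot q^{j,\a} = \frac{q^{j,\a}}{\pi_j(q)}\left(\frac{\pi_j(q_\infty)\,q_\infty^{j,\a}}{q^{j,\a}} - \pi_j(q_\infty)\right) = \frac{\pi_j(q_\infty)}{\pi_j(q)}\bigl(q_\infty^{j,\a}-q^{j,\a}\bigr),$$
which is the claim.

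There is no substantial obstacle here: the whole derivation amounts to recognizing that Theorem \ref{thm:relentrate} renders the dependence of the cross-entropy on $q$ purely logarithmic in the ambient coordinates. The only point requiring slight care is ambient-vs-intrinsic differentiation; this is harmless because, as already used in the proof of Corollary \ref{cor:procrepeq}, the index-raising isomorphism of Corollary \ref{cor:entropyratemetric} is defined modulo the vertical one-forms $\d q^j$, so any ambient extension of $\Phi\circ\Pr_{\rvalphra}$ that agrees on $\M_\gamma$ yields the same gradient vector field.
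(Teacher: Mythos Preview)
Your proposal is correct and takes essentially the same approach as the paper, which simply records the result as an immediate consequence of Corollary~\ref{cor:procrepeq}. You have merely spelled out the short symbolic calculation that the paper leaves implicit.
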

\begin{proof}
	This is an immediate consequence of Cor. \ref{cor:procrepeq}.
\end{proof}
The resulting integral curves will be illustrated in Sec. \ref{sec:examples} for a few low-dimensional examples.
	
	\section{The Generalized Wright-Fisher Process}\label{sec:evolution}
	
		We will now substantiate the process-replicator dynamics' significance by deriving it from the gWF-process's asymptotic behaviour. Recall that the gWF-process's transitions  occur according to the following
	\begin{center}
		\fbox{
			\begin{minipage}[c]{0.8\linewidth}
				\textbf{gWF-iteration:}
				\begin{enumerate}
					\item A (previously selected) procreator carrying a unifilar HMM in configuration $q\in\bar{\M}_\gamma\cap\M_\stateset$ reproduces by generating edge-sequences $e^1_{1:L},\ldots, e^N_{1:L}$ according to $\Pr_{\rvedge_{1:L}}(q)$ and imprinting them into $N$ offspring according to the relative edge counts
					$$\mathrm{Q}^L(e^m_{1:L}):=\left(\frac{\mathrm{card}\set{1\leq l\leq L}{e^m_l=(j,\a)}}{\mathrm{card}\set{1\leq l\leq L}{\rvstate(e^m_l)=j}}\right)_{(j,\a)\in\edgeset(\gamma)}$$
					\item Given a fitness potential $\Phi$, one of the $N$ configurations $\mathrm{Q}^L(e^m_{1:L})$ is selected as the next procreator upon maximization of $\Phi\circ\Pr_{\rvalphra}$.
				\end{enumerate}
			\end{minipage}
		}
	\end{center}
	This scheme only requires minimal computational capabilities on the agent level. In particular, agents don't possess an explicit representation of the configuration space $\bar{\M}_\gamma\cap\M_\stateset$. Indeed, an agent's potential to evolve stems solely from the ability to imprint its own array of behaviours into finite resamplings of itself. The innate variance in the agent's output distribution is thus transformed into a diversity of possible offspring behaviours. In fact, as explained in the introduction, the self-resampling step (i) can be seen as a generalization of the Wright-Fisher evolutionary iteration. This holds the possibility of importing ideas about genetic drift into the theory of finitary processes.\footnote{The connection of genetic drift with CSMs has previously been probed in \cite{crutchfield2012structural} using computer simulations. The present mathematical framework shows a path which is more transparent with respect to the underlying information-theoretic mechanisms.}
	The gWF-iteration can be viewed as a Markov transition kernel $R^L_{N,\Phi}$ on $\bar{\M}_\gamma\cap\M_\stateset$ and it shall be demonstrated that its expectation dynamics $q\mapsto \langle q' \rangle_{R^L_{N,\Phi}[q'|q]}$ asymptotically follows the process-replicator integral curves in the limit $L\to\infty$. For that purpose, Sec. \ref{subsec:empiricalmachines} will show that the probability distribution $\Pr_{Q^L}(q)$ is governed by a large deviation principle with rate function $\mathrm{h}_\gamma(\cdot \parallel q)$. Sec. \ref{subsec:fluctuationmetric} then establishes a corresponding central limit theorem which ultimately reveals the significance of $\mathrm{g}$ as a fluctuation tensor describing the directional covariances  of infinitesimal deviations. Sec. \ref{subsec:greedyasymptotics} makes use of these results to prove that the expectation trajectories of $R^L_{N,\Phi}$ asymptotically follow the integral curves of the process-replicator equation.

	\subsection{Large Deviations of the Empirical Configuration Ensemble}\label{subsec:empiricalmachines}
	
	This section elaborates on the probability distribution of the random variable $\mathrm{Q}^L$ whose image will be denoted by $\bar{\M}_\gamma^{L}$.
	\begin{defn}
		Let $q\in\M_\gamma$ and $L\in\N$. The \emph{empirical configuration ensemble} at sampling length $L$ based at $q\in\bar{\M}_\gamma\cap\M_\stateset$ is the probability space $\left(\bar{\M}_\gamma^{L}, \Pr_{\mathrm{Q}^{L}}(q)\right)$.
	\end{defn}
	The ensemble satisfies the following large deviation principle:
	\begin{thm}\label{thm:largeconsistentdev}
		Let $q\in\M_\gamma$ and $q_L\in\bar{\M}_\gamma^{L}$, $L\in\N$, with $\lim_{L\to\infty} q_L\in\M_\gamma$. Then we have
		$$\Pr_{\mathrm{Q}^{L}}(q)[q_L]= \kappa_L(q_L)\ \e^{-L\mathrm{h}_\gamma(q_L\parallel q)}$$
		for functions $\kappa_L$ satisfying $\lim_{L\to\infty}\frac1L \log \kappa_L(q_L)= 0$.
	\end{thm}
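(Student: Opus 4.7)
My plan is to express $\Pr_{\mathrm{Q}^L}(q)[q_L]$ as a sum over edge trajectories whose empirical type equals $q_L$, isolate an exponential factor of $\mathrm{e}^{-L\mathrm{h}_\gamma(q_L\parallel q)}$ using the formula of Theorem \ref{thm:relentrate}, and verify that every remaining combinatorial contribution is sub-exponential in $L$.

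First I would use unifilarity of $\gamma$ to describe the event $\{\mathrm{Q}^L=q_L\}$ explicitly: writing $n^{j,\mathsf{a}}(e_{1:L})$ for the edge counts and $L_j(e_{1:L}):=\sum_{\mathsf{a}\in\alphset(\gamma,j)} n^{j,\mathsf{a}}(e_{1:L})$ for the state visit counts, the event is equivalent to the equations $n^{j,\mathsf{a}}=L_j\, q_L^{j,\mathsf{a}}$ for every $(j,\mathsf{a})\in\edgeset(\gamma)$ together with $\sum_j L_j=L$. Since $q_L$ approaches a point in $\M_\gamma$, an application of Perron--Frobenius uniform on a neighbourhood of the limit yields $L_j/L\to\pi_j(q_L)$ and $L_j\to\infty$ for each $j$, so none of the state visit counts becomes rare.

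Next, by formula (\ref{eq:edgeseqprob}) with stationary initialization $\pi(q)$, every admissible path with these counts has probability
\begin{equation*}
\pi_{\mathrm{I}(e_1)}(q)\cdot\prod_{(j,\mathsf{a})\in\edgeset(\gamma)} (q^{j,\mathsf{a}})^{n^{j,\mathsf{a}}}\cdot B(e_L),
\end{equation*}
where $B(e_L)$ is a boundary factor of order $O(1)$. Since the per-path probability depends only on the empirical counts, the remaining task is to enumerate the paths. Decomposing by the order in which outgoing edges are selected on successive visits to each state yields
\begin{equation*}
\#\{\text{admissible paths with counts } n\}=\Bigl(\prod_{j\in\stateset}\binom{L_j}{(n^{j,\mathsf{a}})_{\mathsf{a}\in\alphset(\gamma,j)}}\Bigr)\cdot C_L(q_L),
\end{equation*}
where $C_L(q_L)$ is a BEST-theorem-style cofactor of a graph Laplacian built from the $q_L^{j,\mathsf{a}}$ which selects those exit orderings that compose into a genuine length-$L$ walk in $\gamma$, together with the choice of initial and terminal state. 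Strong connectivity of $\gamma$ ensures $C_L(q_L)$ is nonzero and of at most polynomial size in $L$. Stirling's formula on the multinomials gives
\begin{equation*}
\log\prod_j\binom{L_j}{n^{j,\cdot}}= -L\sum_{j}\pi_j(q_L)\sum_{\mathsf{a}\in\alphset(\gamma,j)} q_L^{j,\mathsf{a}}\log q_L^{j,\mathsf{a}} + o(L),
\end{equation*}
which combined with $\sum_{j,\mathsf{a}} n^{j,\mathsf{a}}\log q^{j,\mathsf{a}} = L\sum_j\pi_j(q_L)\sum_{\mathsf{a}} q_L^{j,\mathsf{a}}\log q^{j,\mathsf{a}} + o(L)$ produces an exponent of $-L\mathrm{h}_\gamma(q_L\parallel q)+o(L)$ by Theorem \ref{thm:relentrate}. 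Everything else, namely $\pi_{\mathrm{I}(e_1)}(q)$, $B(e_L)$, and $C_L(q_L)$, is absorbed into $\kappa_L(q_L)$.

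The main technical obstacle is establishing sub-exponentiality of $\kappa_L(q_L)$ uniformly as $q_L$ varies near its limit in $\M_\gamma$: I must verify both that the BEST cofactor $C_L(q_L)$ grows polynomially in $L$ with coefficients bounded uniformly on compact subsets of $\M_\gamma$, and that the Stirling error terms, which depend on the fluctuations of $L_j$ around $L\pi_j(q_L)$, are $o(L)$ on the event in question. The former hinges on strong connectivity of $\gamma$ to keep the relevant Laplacian minor bounded away from zero; the latter rests on a spectral gap for $\ubar{q_L}$ inherited continuously from the limit point in $\M_\gamma$.
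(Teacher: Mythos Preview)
Your approach is essentially the paper's: count edge sequences of a given empirical type via the BEST theorem and Kirchhoff's matrix-tree formula, apply Stirling to the multinomials, and identify the exponent as $-L\mathrm{h}_\gamma(q_L\parallel q)$ once $L_j/L = \pi_j(q_L) + O(1/L)$ is established. The paper makes this last step precise not through Perron--Frobenius on the chain but through the in-degree/out-degree balance of an Eulerian walk, and it explicitly sums over the (at most polynomially many) empirical edge-count vectors in the fibre over $q_L$, since $q_L$ fixes only the ratios $n^{j,\mathsf{a}}/L_j$ and not the individual $L_j$.
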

	\begin{proof}[Proof (Outline, Details in Appendix \ref{subsec:prooflargedev})] We factor $\mathrm{Q}^L$ according to
		\begin{center}
			\begin{tikzcd}
				\edgeset_{1:L}(\gamma) \arrow[d,"\hat{\pi}"] \arrow[rd,"\mathrm{Q}^L"] & \\
				\bar{\Sigma}^{\edgeset(\gamma)} \arrow[r,two heads,swap,"\psi"] & \bar{\M}_\gamma\cap\M_\stateset
			\end{tikzcd}
		\end{center}
		with the maps
		\begin{align*}
			\hat{\pi}&:\ \edgeset_{1:L}(\gamma)\to \bar{\Sigma}^{\edgeset(\gamma)}\ ,\quad e_{1:L}\mapsto \left(\frac{1}{L}\mathrm{card}\set{1\leq l\leq L}{e_l=(j,\a)}\right)_{(j,\a)\in\edgeset(\gamma)},\\
			\psi&:\ \bar{\Sigma}^{\edgeset(\gamma)}\onto \bar{\M}_\gamma\ ,\quad (x^{j,\a})\mapsto \left(\frac{x^{j,\a}}{\sum_{\mathsf{b}\in\alphset(\gamma,j)} x^{j,\mathsf{b}}}\right).
		\end{align*}
		A point $x=\hat{\pi}(e_{1:L})$ will be identified with the multi-graph having edgeset $\bigsqcup_{1\leq l\leq L} \{e_l\}$. By this device, $\hat{\pi}(\edgeset_{1:L}(\gamma))$ corresponds bijectively to the set of directed edge-$\alphset$-labelled multi-graphs with vertex set $\stateset$ and $L$ edges, picked from the set $\edgeset(\gamma)$, admitting an Euler path. The number of edges $j\overset{\a}{\to} \gamma(j,\a)$ in such a graph $x$ is given by $L x^{j,\a}$. We call these graphs the \emph{empirical types}. Observe that if the empirical type $x$ admits an Euler path with labelled edge sequence $e_{1:L}$ then
		$$\Pr_{\rvedge_{1:L}}(q)[e_{1:L}]= \pi_{\rvstate(e_1)}(q)\ \prod_{(j,\a)\in\edgeset(\gamma)} (q^{j,\a})^{Lx^{j,\a}}$$
		and thus the conditional probability $\Pr_{\rvedge_{1:L}|\rvstate_1}(q)[e_{1:L} | \rvstate(e_1)]$ does not depend on the actual edge sequence but only on the empirical type. Therefore, the value of $\hat{\pi}_*\Pr_{\rvedge_{1:L}}(q)[x]$ can be computed by multiplying this conditional probability with $\sum_{e_{1:L}\in\hat{\pi}^{-1}(x)} \pi_{\rvstate(e_1)}(q)$. The latter sum can be bounded from above and below provided one can compute the cardinality of $\hat{\pi}^{-1}(x)$, i.e. the number of different Euler paths in the empirical type. By standard arguments of graph theory this problem can be reduced to the problem of counting arborescences and their number can in turn be computed as a determinant by invoking Kirchhoff's matrix-tree theorem. One obtains the asymptotic behaviour
		$$\hat{\pi}_*\Pr_{\rvedge_{1:L}}(q)[x]=\hat{\kappa}_L(x) \e^{-L\mathrm{h}_\gamma(\psi(x)\parallel q)},$$
		for some function $\hat{\kappa}_L$ satisfying the growth condition $\lim_{L\to\infty}\frac1L \log\hat{\kappa}_L(x)=0$.
		Pushing this forward along $\psi$ yields the expression from the theorem with $\kappa_L(q_L)=\sum_{x\in\psi^{-1}(q_L)\cap\hat{\pi}(\edgeset_{1:L}(\gamma))} \hat{\kappa}_L(x)$. It remains to be shown that $\kappa_L$ satisfies the same growth condition as $\hat{\kappa}_L$ which can readily be verified by observing that the number of points in $\hat{\pi}(\edgeset_{1:L}(\gamma))$ grows at most like a power of $L$.
	\end{proof}
	
	\begin{rem}
		One should bear in mind that, although the large deviation rate function $\mathrm{h}_\gamma(\cdot\parallel q)$ is defined on $\bar{\M}_\gamma\cap\M_\stateset$, it represents the relative entropy rate only on the connected component of $\M_\gamma\setminus \mathcal{D}_\gamma$ containing $q$ (see Thm. \ref{thm:relentrate}).
	\end{rem}
	
	\subsection{The Rôle of $\mathrm{g}$ as a Fluctuation Tensor}\label{subsec:fluctuationmetric}
	
	Viewing the empirical configuration ensembles $\Pr_{\mathrm{Q}^{L}}(q),\ L\in\N$, as measures on $\bar{\M}_\gamma\cap\M_\stateset$, the limit measure for $L\to\infty$ is the Dirac measure at $q$. The manner in which convergence to this limit occurs will turn out to be important in order to study the asymptotics of the gWF-process. 
	We are going to derive a central limit theorem showing that an appropriately rescaled weak limit of the empirical configuration ensemble yields a Gaussian measure on the tangent space at $q$. The latter is hereby viewed as the set of infinitesimal empirical deviations at $q$.
	
	We identify $T_q\M_\gamma$ with the linear subspace $\set{v\in\R^{\edgeset(\gamma)}}{\forall j\in\stateset:\ \sum_{\a\in\alphset(\gamma,j)} \bra{(j,\a )}\cdot v=0}$ and define the rescaling embedding
	$$\mathrm{u}_{L}: \M_\gamma\times\M_\gamma \to T\M_\gamma\ ,\quad (q,q')\mapsto \mathrm{u}_{q,L}(q'):=\left(q, L^{\frac12} (q'-q)\right).$$
	We would like to state the central limit theorem in terms of convergence of functionals. Define the space of \emph{configuration observables}  as the space of bounded Borel-measurable functions $\mathscr{L}^\infty(\bar{\M}_\gamma\cap\M_\stateset)$ which factor through $\Pr_{\rvalphra}$. The expectation value of a configuration observable with respect to an empirical configuration ensemble is a well-defined functional
	$f\mapsto \langle f \rangle_{\Pr_{\mathrm{Q}^{L}}(q)},$
	where $\Pr_{\mathrm{Q}^{L}}(q)$ is regarded as a probability measure on $\bar{\M}_\gamma\cap\M_\stateset$. Pushing it forward along $\mathrm{u}_{q,L}$
	we obtain a functional on $\mathscr{L}^\infty(T_q\M_\gamma)$:
	
	\begin{thm}\label{thm:centrallimitthm}
		Let $q\in\M_\gamma$. The sequence $\left(\langle\cdot\rangle_{(\mathrm{u}_{q,L})_*\Pr_{\mathrm{Q}^{L}}(q)}\right)_{L\in\N}$ of functionals on $\mathscr{L}^\infty(T_q\M_\gamma)$ converges weakly to the functional
		$$\Lambda:\ f\mapsto \frac{1}{(2\pi)^{\frac12\dim(\M_\gamma)}}\int_{T_q\M_\gamma} f(v)\ \e^{-\frac{1}{2}\mathrm{g}(q)\cdot v^{\otimes 2}} |\omega_{\mathrm{g},q}|,$$
		where $|\omega_{\mathrm{g},q}|$ is the constant density on the affine manifold $T_q\M_\gamma$ given by evaluating the canonical Riemannian density $|\omega_\mathrm{g}|$ at $q$.
	\end{thm}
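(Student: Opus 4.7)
The plan is to combine Theorem \ref{thm:largeconsistentdev}, which expresses the point masses of $\Pr_{\mathrm{Q}^L}(q)$ in terms of $\mathrm{h}_\gamma$, with Proposition \ref{prop:hessianmetricjet}, which identifies $\mathrm{g}(q)$ as the Hessian of $\mathrm{h}_\gamma(\cdot\parallel q)$ at $q$. Substituting into Theorem \ref{thm:largeconsistentdev} and rescaling to $v = L^{1/2}(q_L-q)$ yields, uniformly on compacta of $T_q\M_\gamma$,
$$L\,\mathrm{h}_\gamma(q+L^{-1/2}v \parallel q) \;=\; \tfrac{1}{2}\mathrm{g}(q)\cdot v^{\otimes 2} \;+\; r_L(v),\qquad r_L(v)=O(L^{-1/2}|v|^3).$$
Thus to leading order the rescaled push-forward should look like a Gaussian of covariance $\mathrm{g}(q)^{-1}$.

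First I would establish tightness of the family $\bigl((\mathrm{u}_{q,L})_*\Pr_{\mathrm{Q}^L}(q)\bigr)_{L\in\N}$. Since $\mathrm{h}_\gamma(\cdot\parallel q)$ has a nondegenerate strict minimum at $q$, there exist a neighborhood $\mathcal{V}\ni q$ and $c>0$ with $\mathrm{h}_\gamma(q_L\parallel q)\geq c|q_L-q|^2$ on $\mathcal{V}$, and a uniform lower bound off $\mathcal{V}$. Combining these with the subexponential prefactor $\kappa_L$ from Theorem \ref{thm:largeconsistentdev} and summing over the polynomially many admissible configurations at each scale shows that for every $\epsilon>0$ there exists $R>0$ such that $(\mathrm{u}_{q,L})_*\Pr_{\mathrm{Q}^L}(q)$ places mass at most $\epsilon$ outside the ball of rescaled radius $R$, uniformly in $L$. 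This reduces the statement to convergence of integrals of bounded $f$ against cutoffs to compact $K\subset T_q\M_\gamma$.

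On such a compact $K$, the admissible configurations $q_L\in\bar{\M}_\gamma^L$ with $L^{1/2}(q_L-q)\in K$ form an asymptotic lattice of spacing $O(L^{-1/2})$ in the ambient directions transverse to the constraints $\sum_\a q^{j,\a}=1$, with local density tending to a constant determined by $\pi(q)$. Replacing $L\,\mathrm{h}_\gamma(q_L\parallel q)$ by $\tfrac{1}{2}\mathrm{g}(q)\cdot v^{\otimes 2}$ via the Taylor estimate and interpreting the discrete sum over this lattice as a Riemann sum of mesh $L^{-1/2}$ converts
$$\bigl\langle f\bigr\rangle_{(\mathrm{u}_{q,L})_*\Pr_{\mathrm{Q}^L}(q)}\longrightarrow \int_{T_q\M_\gamma} f(v)\,\rho(v)\,dv$$
for some limiting density $\rho$ proportional to $e^{-\frac{1}{2}\mathrm{g}(q)\cdot v^{\otimes 2}}$.

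The main obstacle is pinning down the normalization of $\rho$. The direct route is to extract the explicit asymptotics of $\kappa_L(q+L^{-1/2}v)$ from the Kirchhoff matrix-tree representation and the multinomial factor appearing in the proof of Theorem \ref{thm:largeconsistentdev}, applying Stirling's formula with uniform remainder to show $L^{d/2}\kappa_L(q+L^{-1/2}v)\to \sqrt{\det \mathrm{g}(q)}/(2\pi)^{d/2}$, where $d=\dim\M_\gamma$; the coefficient $\sqrt{\det \mathrm{g}(q)}$ then combines with Lebesgue measure to give $|\omega_{\mathrm{g},q}|$. A more elegant alternative, which I would favour to minimize combinatorial bookkeeping, is to invoke tightness and the already-established shape $\rho \propto e^{-\frac{1}{2}\mathrm{g}(q)\cdot v^{\otimes 2}}$ together with the normalization $\Pr_{\mathrm{Q}^L}(q)[\bar{\M}_\gamma^L]=1$: any weak subsequential limit of the rescaled measures must be a probability measure with this Gaussian density, and there is exactly one such measure, namely the one corresponding to $\Lambda$.
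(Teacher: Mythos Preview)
Your high-level plan---combine the large-deviation expression of Theorem \ref{thm:largeconsistentdev} with the Hessian identification of Proposition \ref{prop:hessianmetricjet}, reduce by tightness to compacta, then fix the normalization via total mass one---matches the paper's strategy and the final normalization trick is indeed how the paper closes the argument. The gap is in the middle step, where you assert that the admissible configurations $q_L\in\bar{\M}_\gamma^L$ with $L^{1/2}(q_L-q)\in K$ ``form an asymptotic lattice of spacing $O(L^{-1/2})$ \ldots\ with local density tending to a constant.'' This is precisely the technical core that cannot be taken for granted.

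The point is that $\mathrm{Q}^L$ factors as $\psi\circ\hat\pi$, where $\hat\pi$ sends an edge string to its empirical edge-frequency vector in the \emph{larger} simplex $\bar\Sigma^{\edgeset(\gamma)}$, and $\psi$ is the nonlinear renormalization $(x^{j,\a})\mapsto(x^{j,\a}/\sum_{\mathsf b}x^{j,\mathsf b})$. The points $\hat\pi(e_{1:L})$ lie in a tubular neighbourhood of width $O(1/L)$ around the affine submanifold $\hat{\M}_\gamma=\varphi(\M_\gamma)$, and the map $\psi$ collapses $|\stateset|$-dimensional fibres. Consequently the image $\bar{\M}_\gamma^L\subset\bar{\M}_\gamma$ is \emph{not} a lattice: the denominators $\sum_l\mathbf 1\{\rvstate(e_l)=j\}$ vary from sample to sample, so the ``grid spacing'' in each coordinate $q^{j,\a}$ is path-dependent, and several empirical types in $\bar\Sigma^{\edgeset(\gamma)}$ can map to the same point of $\bar{\M}_\gamma^L$. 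To make your Riemann-sum step rigorous you would have to show that the weighted counting measure $\Pr_{\mathrm{Q}^L}(q)$, after stripping off the exponential factor $\e^{-L\mathrm h_\gamma(\cdot\parallel q)}$, is asymptotically proportional to a fixed smooth density on $\M_\gamma$; this is exactly what your ``local density tending to a constant'' hides. The paper spends three of its four steps on this: it first proves a graph-combinatorial lemma (via an ear decomposition of $\gamma$) producing a basis $B$ of $T_{\varphi(q)}\hat{\M}_\gamma$ along which the set $\hat\pi(\edgeset_{1:L}(\gamma))\cap\R_+^{\edgeset(\gamma)}$ is genuinely translation-invariant by $\frac1L B$, then compares the resulting weighted counting measure upstairs to a translation-invariant volume form $\hat\omega_\mathrm g$, and only afterwards pushes down through $\psi_L$. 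Without an argument of this sort, your ``elegant alternative'' for the constant does not close: you need the prefactor to be asymptotically \emph{independent of $v$} before mass conservation can pin it down, and that independence is equivalent to the lattice-density claim you have not proved.
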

	\begin{proof}[Proof (Outline, Details in Appendix \ref{subsec:proofcentrallimitthm})]
		We fix $q\in\M_\gamma$ and consider the measures
		$$\mu_{L}[S]:=\langle \chi_S\rangle_{(\mathrm{u}_{q,L})_*\Pr_{\mathrm{Q}^{L}}(q)}\quad \textnormal{and}\quad \mu[S]:=\Lambda(\chi_S)$$
		for $S$ a Borel-subset of $T_q\M_\gamma$. Due to the Portmanteau-Theorem we need to show
		\begin{equation}\label{eq:portmanteauoutline}
		\mu[U]\leq\liminf_{L\to\infty}\mu_L[U]\ ,\quad \textnormal{for any open set } U\subset T_q\M_\gamma.
		\end{equation}
		By a standard argument exploiting the fact that $\mu$ is a probability measure on a $\sigma$-compact metric space, it is enough to show (\ref*{eq:portmanteauoutline}) for $U$ relatively-compact. We again use the maps $\hat{\pi}, \psi$ like in the proof of Thm. \ref{thm:largeconsistentdev} as well as the 
		diffeomorphic section $\varphi$ of $\psi$ from Lem. \ref{lem:emptypemap} given by
		$$q\mapsto (\pi_j(q) q^{j,\a})_{(j,\a)\in\edgeset(\gamma)}.$$
		All the occurring maps are displayed in the following commutative diagram		
		\begin{center}
			\begin{tikzcd}
				\edgeset_{1:L}(\gamma) \arrow[d,"\hat{\pi}"] \arrow[rd,"\mathrm{Q}^L"] & T_q\M_\gamma\\
				\bar{\Sigma}^{\edgeset(\gamma)} \arrow[r,two heads,swap,"\psi"] & \bar{\M}_\gamma\cap\M_\stateset \arrow[l,bend left,"\varphi"] \arrow[u,swap,"\mathrm{u}_{q,L}"]
			\end{tikzcd}
		\end{center}
		The proof is now subdivided into 4 steps:
		
		\emph{Step 1}: We examine the discrete point-set $\hat{\pi}(\edgeset_{1:L}(\gamma))\cap\Sigma^{\edgeset(\gamma)}$. More precisely, we will define an auxiliary measure $\hat{\nu}_L$ on $\Sigma^{\edgeset(\gamma)}$ supported on this discrete set such that for any $x\in \hat{\pi}(\edgeset_{1:L}(\gamma))\cap\Sigma^{\edgeset(\gamma)}$:
		$$\hat{\pi}_*\Pr_{\rvedge_{1:L}}(q)[x]=\sum_{j\in\stateset} \mathrm{card}\ (\hat{\pi}^{-1}(x))\cdot \Pr_{\rvedge_{1:L}| \rvstate_1}(q)[e_{1:L} | j]\cdot  \hat{\nu}_L[x],$$
		where $e_{1:L}\in\hat{\pi}^{-1}(x)$ can be chosen arbitrarily. We show that there exists a grid of parallelepipeds covering $\hat{\pi}(\edgeset_{1:L}(\gamma))$ such that any two parallelepipeds which lie in $\Sigma^{\edgeset(\gamma)}$ have the same measure under $\hat{\nu}_L$.
		
		\emph{Step 2}: We push $\hat{\nu}_L$ forward along $\mathrm{u}_{q,L}\circ \psi$ to a measure $\nu_L$ on $T_q\M_\gamma$ and use step 1 to construct a certain collection $\mathfrak{K}_L(U)$ of pairwise disjoint subsets of $U$ with the following three properties for $L\to\infty$:
		\begin{itemize}
			\item[-] The union of the sets from $\mathfrak{K}_L(U)$ asymptotically exhausts $U$.
			\item[-] The diameters of the sets from $\mathfrak{K}_L(U)$ go uniformly to zero.
			\item[-] For any $K,K'\in \mathfrak{K}_L(U)$, we have uniformly $\frac{\nu_L(K)}{\nu_L(K')}\to 1$.
		\end{itemize}
		
		\emph{Step 3:} We define another auxiliary measure $\lambda_L$ on $\mathrm{u}_{q,L}(\M_\gamma)\subset T_q\M_\gamma$ as the integral with respect to a certain volume form and show that we have $\frac{\nu_L(K)}{\lambda_L(K)}\to 1$, uniformly for $K\in \mathfrak{K}_L(U)$.
		
		\emph{Step 4:} We express $\mu_L(U)$ in terms of $\nu_L(U)$ and estimate it from below using the results of steps 1, 2 and 3 as well as the large deviation principle from Thm. \ref{thm:largeconsistentdev}, verifying (\ref*{eq:portmanteauoutline}).
	\end{proof}	

	\begin{rem}\label{rem:fluctuationmetric}
		The above theorem makes contact with thermodynamic fluctuation theory by recovering the entropy rate tensor $\mathrm{g}$ as a fluctuation tensor in the following sense: $T\M_\gamma$ is made into a bundle of probability spaces through the fibre measure
		$$S\mapsto \Lambda[\chi_S]\ ,\quad S\subset T_q\M_\gamma\ \textnormal{Borel-measurable}.$$
		If we pick some frame $(\xi_m)_{1\leq m\leq \dim \M_\gamma}$ of $T\M_\gamma$ and consider the $\mathrm{g}$-dual coframe $(\xi_m^\flat)$, we can view the $\xi_m^\flat(q)$ as random variables $T_q\M_\gamma\to\R$ and easily verify
		$$\mathrm{Cov}(\xi_l^\flat,\xi_m^\flat)=\mathrm{g}\cdot(\xi_l\otimes \xi_m).$$
		In Section \ref{subsec:Riemanniangeom}, we defined the energy $\mathrm{E}(c)$. Intuitively, the above theorem says that it asymptotically quantifies the amount of infinitesimal empirical fluctuations making up the curve $c$. In the context of fluctuation theory and finite-time thermodynamics, this notion has previously been investigated for statistical mechanical systems under the name thermodynamic divergence (\cite{crooks2007measuring},\cite{ruppeiner1995riemannian}). The counterpart of empirical fluctuations is in that context played by the system's thermodynamic fluctuations.
	\end{rem}

\subsection{Convergence of Expectation Trajectories}\label{subsec:greedyasymptotics}
	
	We shall now use Thm. \ref{thm:centrallimitthm} in order to derive the process-replicator equation from the asymptotic behaviour of the gWF-process  $\rho^{L,\tau}$ which starts with an initial probability measure $\rho^{L,\tau}(0)$ over $\bar{\M}_\gamma\cap\M_\stateset$ and then iterates the transition kernel $R^L_{N,\Phi}$ at times $t_n:=n \tau,\ n\in\N$, for a fixed generation length $\tau>0$. It thus produces a sequence of probability distributions $(\rho^{L,\tau}(t_n))_{n\in\N}$ supported on $\bar{\M}_\gamma^L$ which satisfy the recursion
	$$\rho^{L,\tau}(t_n)[q']:=\sum_{q\in \bar{\M}_\gamma^{L}} R^L_{N,\Phi}[q'|q]\ \rho^{L,\tau}(t_{n-1})[q].$$
	We focus on the associated deterministic expectation process, starting at $q(0)\in \M_\gamma$ and iterating the transition
	$q\mapsto \langle q' \rangle_{R^L_{N,\Phi}[q' |q]}$
	at times $t_n$. The resulting trajectory is made into a piecewise-linear path $q^{L,\tau}$ by setting
	\begin{equation}\label{eq:expectationiteration}
		q^{L,\tau}(0):=q(0)\ ,\quad q^{L,\tau}(t_{n+1}):= \langle q' \rangle_{R^L_{N,\Phi}[q' | q^{L,\tau}(t_{n})]}
	\end{equation}
	and linear interpolation
	$$q^{L,\tau}(t):= \frac{t_{n+1}-t}{t_{n+1}-t_n} q^{L,\tau}(t_n)+ \frac{t-t_n}{t_{n+1}-t_n} q^{L,\tau}(t_{n+1}),$$
	for $t\in (t_n,t_{n+1})$.
	
	\begin{thm}\label{thm:graddescent}
		Let $\mathcal{C}$ be an open subset of $\M_\gamma\setminus\mathcal{D}_\gamma$ and $\Phi$ be a fitness potential with nowhere-vanishing differential on $\mathcal{C}$. For any $\beta>0$ and any sequence $(\tau_L)$ satisfying $\lim_{L\to\infty}L \tau_L^2=\beta$, the sequence of paths $q^{L}:=q^{L,\tau_L}$, $L\in\N$, with fixed initial point $q^{L}(0)=:q(0)\in\mathcal{C}$ converges uniformly to the respective integral curve of
		$$\dot{q}=\frac{\alpha(N)}{\sqrt{\beta}} \frac{\nabla_{\mathrm{g}}(\Phi\circ\Pr_{\rvalphra})(q)}{\|\nabla_{\mathrm{g}}(\Phi\circ\Pr_{\rvalphra})(q)\|}$$
		on any interval $[0,T]$ on which it remains in $\mathcal{C}$. Herein, $\alpha(N)$ is the last order statistic of $N$ independent standard normal random variables which can be expressed as
		$$\alpha(N)= \frac{N 2^{2-N}}{\sqrt{2\pi}} \sum_{l=1}^{\lfloor N/2 \rfloor} \begin{pmatrix} N-1 \\ 2l -1\end{pmatrix} \int_{-\infty}^{\infty} x \e^{-x^2} \mathrm{erf}(x)^{2l-1} \d x.$$
	\end{thm}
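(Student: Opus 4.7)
The plan is to compute the one-step expectation $q \mapsto \langle q' \rangle_{R^L_{N,\Phi}[q'|q]}$ to leading order in $1/\sqrt{L}$ and then upgrade the resulting first-order ODE approximation to a uniform trajectory convergence statement via a Grönwall-type argument.

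For the one-step analysis, a gWF-transition amounts to sampling $q'_1,\ldots,q'_N$ independently from the empirical configuration ensemble $\Pr_{\mathrm{Q}^L}(q)$ and retaining the offspring $q'_M$ with $M:=\arg\max_n (\Phi\circ\Pr_{\rvalphra})(q'_n)$. Setting $v_n := \sqrt{L}(q'_n - q) \in T_q\M_\gamma$, Thm.~\ref{thm:centrallimitthm} implies that the $v_n$ asymptotically behave like independent centered Gaussians on $T_q\M_\gamma$ with covariance tensor $\mathrm{g}(q)^{-1}$. A Taylor expansion at $q$ gives
$$(\Phi\circ\Pr_{\rvalphra})(q'_n) = (\Phi\circ\Pr_{\rvalphra})(q) + \tfrac{1}{\sqrt{L}}\, \phi\cdot v_n + O(1/L),\qquad \phi := \d(\Phi\circ\Pr_{\rvalphra})|_q,$$
so that asymptotically $M$ coincides with the argmax of the scalar Gaussians $Y_n := (\phi\cdot v_n)/\|\phi\|_{\mathrm{g}^{-1}}$, which are iid standard normal. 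The Gaussian conditional-mean identity
$$\langle v_n\rangle_{\cdot | Y_n=y} = y\cdot \frac{\phi^\sharp}{\|\phi\|_{\mathrm{g}^{-1}}} = y\cdot\frac{\nabla_\mathrm{g}(\Phi\circ\Pr_{\rvalphra})(q)}{\|\nabla_\mathrm{g}(\Phi\circ\Pr_{\rvalphra})(q)\|_\mathrm{g}}$$
combined with the tower property yields
$$\langle v_M\rangle = \alpha(N)\cdot \frac{\nabla_\mathrm{g}(\Phi\circ\Pr_{\rvalphra})(q)}{\|\nabla_\mathrm{g}(\Phi\circ\Pr_{\rvalphra})(q)\|_\mathrm{g}} + o(1),$$
with $\alpha(N):=\langle\max_{1\le n\le N} Y_n\rangle$ the expectation of the $N$-th order statistic of $N$ iid standard normals. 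Dividing the one-step displacement by $\tau_L$ and using $\sqrt{L}\tau_L\to\sqrt{\beta}$ produces precisely the velocity field claimed in the theorem.

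The closed-form expression for $\alpha(N)$ is then a routine computation: starting from the classical order-statistic formula $\alpha(N) = N\int x\,\varphi(x)\,\Psi(x)^{N-1}\,\d x$ for the standard normal density $\varphi$ and CDF $\Psi$, one substitutes $u = x/\sqrt{2}$ to rewrite $\Psi$ in terms of $\mathrm{erf}$, binomially expands $(1+\mathrm{erf}(u))^{N-1}$, and discards all summands with an even power of $\mathrm{erf}(u)$ by odd symmetry of $u\,\e^{-u^2}$. Re-indexing the remaining odd powers as $k=2l-1$ yields the stated sum.

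The main obstacle is promoting the one-step heuristic into uniform trajectory convergence. Thm.~\ref{thm:centrallimitthm} in its stated form only controls expectations of bounded measurable observables, so one first has to upgrade it to convergence of the relevant first moments of the $v_n$ (and a few joint moments needed for bounding the Taylor remainder); this follows from uniform integrability of the $v_n$, which can be read off the exponential decay in the large deviation principle of Thm.~\ref{thm:largeconsistentdev} together with the uniform boundedness of $\bar{\M}_\gamma^L\subset\bar{\M}_\gamma$. A modulus-of-continuity argument in $q$ --- using smoothness of $\mathrm{g}$ and of $\Phi\circ\Pr_{\rvalphra}$, together with the non-vanishing of $\nabla_\mathrm{g}(\Phi\circ\Pr_{\rvalphra})$ on any compact $\mathcal{C}'\Subset\mathcal{C}$ --- upgrades the pointwise $o(1)$ above to a uniform one-step error of size $o(\tau_L)$ on $\mathcal{C}'$. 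A standard Grönwall estimate then yields uniform convergence of the piecewise-linear paths $q^L$ to the integral curve of the limiting ODE on any interval $[0,T]$ on which the latter remains in $\mathcal{C}$.
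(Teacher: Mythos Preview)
Your proposal is correct and follows essentially the same route as the paper: compute the one-step expectation via the central limit theorem (Thm.~\ref{thm:centrallimitthm}), identify the leading $L^{-1/2}$ term as $\alpha(N)$ times the unit $\mathrm{g}$-gradient, and then run a Gr\"onwall argument on the piecewise-linear paths.

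The only notable organizational differences are these. For the one-step computation the paper expands $R^L_{N,\Phi}[q+L^{-1/2}v\,|\,q]$ directly as $N$ times the density of $v$ times $(\Pr[U_{v,L}])^{N-1}$ plus an explicit tie term $\varepsilon_L(v)$, then projects onto a $\mathrm{g}$-orthonormal basis $(v_m)$ with $v_1$ the unit gradient and computes $\Lambda[\chi_{U_v}]=\tfrac12(1+\mathrm{erf}(v_1^\flat\cdot v/\sqrt{2}))$; you instead phrase the same computation probabilistically via the Gaussian conditional-mean identity and the tower property, which is equivalent and arguably cleaner. For the passage from weak convergence to convergence of the relevant (unbounded) first moments, the paper packages this into a tailored dominated-convergence lemma for sequences $f_L$ dominated by an $\Lambda$-integrable $F$, whereas you invoke uniform integrability from the large deviation principle; both accomplish the same thing. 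One small point you pass over that the paper treats explicitly is the tie event $\{\text{two offspring maximize }\Phi\circ\Pr_{\rvalphra}\}$: the paper bounds its contribution by $(\mathrm{u}_{q,L})_*\Pr_{\mathrm{Q}^L}(q)[\partial U_{v,L}]$ and shows this tends to zero, which you should mention for completeness.
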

	\begin{proof}[Proof (Outline, Details in Appendix \ref{subsec:proofgraddescent})]
		Combining Thm. \ref{thm:centrallimitthm} with a dominated convergence argument, we obtain
		$$\langle q' \rangle_{R^L_{N,\Phi}[q' |q]}=q+L^{-\frac12}\sum_{m=1}^{\dim(\M_\gamma)}\Lambda[f^m] v_m + o(L^{-\frac12}),$$
		where $(v_m)$ is any $\mathrm{g}$-orthonormal basis of $T_q\M_\gamma$ with $v_1=\frac{\nabla_{\mathrm{g}}(\Phi\circ\Pr_{\rvalphra})(q)}{\|\nabla_{\mathrm{g}}(\Phi\circ\Pr_{\rvalphra})(q)\|_\mathrm{g}}$ and the $f^m$ are functions on $T_q\M_\gamma$ with $\Lambda[f^m]=0$, for $m> 1$, and $\Lambda[f^1]=\alpha(N)$. Hence:
		$$q^{L}(t_{n+1})=q^{L}(t_n)+L^{-\frac12} X(q^{L}(t_n)) + o(L^{-\frac12})\ ,\quad \textnormal{with } X(q)= \alpha(N) \frac{\nabla_{\mathrm{g}}(\Phi\circ\Pr_{\rvalphra})(q)}{\|\nabla_{\mathrm{g}}(\Phi\circ\Pr_{\rvalphra})(q)\|_\mathrm{g}}.$$
		It only remains to be verified that the (linearly interpolated) paths $q^{L}$ converge uniformly to the respective integral curve. This is accomplished by standard ODE-methods.
	\end{proof}
	
	\begin{rem}
		Exact expressions for the last order statistic $\alpha(N)$ are known for $N\leq 5$ (see \cite{david1970order}). For instance, $\alpha(2)=\frac{1}{\sqrt{\pi}}$ and $\alpha(3)=\frac{3}{2\sqrt{\pi}}$.
	\end{rem}
	
	\begin{cor}
		The limits of expectation trajectories $(q^{L}(t_n))_{n\in\N}$ of the gWF-process with some fitness potential $\Phi$ as in Thm. \ref{thm:graddescent} are constant-speed reparametrizations of the respective process-replicator integral curves.
		\qed
	\end{cor}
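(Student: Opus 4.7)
Set $\Psi:=\Phi\circ\Pr_{\rvalphra}$; by the hypothesis of Thm. \ref{thm:graddescent} the differential $\d\Psi$ is nowhere-vanishing on $\mathcal{C}$, so the Riemannian gradient $\nabla_\mathrm{g}\Psi$ does not vanish anywhere on $\mathcal{C}$. The plan is to identify the limit trajectory supplied by Thm. \ref{thm:graddescent} and the process-replicator integral curve through the same initial point as two parametrizations of a single orbit, then to read off the constant $\mathrm{g}$-speed directly from the normalized-gradient ODE.

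Concretely, let $\tilde q:[0,T]\to\mathcal{C}$ be the limit of expectation trajectories with $\tilde q(0)=q(0)$, and let $q:[0,S]\to\mathcal{C}$ be the process-replicator integral curve $\dot q=\nabla_\mathrm{g}\Psi$ with $q(0)=q(0)$, maximally extended inside $\mathcal{C}$. From Thm. \ref{thm:graddescent} we read
$$\dot{\tilde q}(t)=\frac{\alpha(N)}{\sqrt{\beta}}\,\frac{\nabla_\mathrm{g}\Psi(\tilde q(t))}{\|\nabla_\mathrm{g}\Psi(\tilde q(t))\|_\mathrm{g}},$$
which at once gives $\|\dot{\tilde q}\|_\mathrm{g}\equiv\alpha(N)/\sqrt{\beta}$, the desired constant-speed property. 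To exhibit $\tilde q$ as a reparametrization of $q$, I would next solve the scalar initial-value problem
$$\dot\tau(t)=\frac{\alpha(N)/\sqrt{\beta}}{\|\nabla_\mathrm{g}\Psi(q(\tau(t)))\|_\mathrm{g}},\qquad \tau(0)=0.$$
Since $s\mapsto\|\nabla_\mathrm{g}\Psi(q(s))\|_\mathrm{g}$ is continuous and, by the no-vanishing hypothesis, bounded below on compact sub-arcs of $q([0,S])$, this admits a unique $C^1$ strictly-increasing solution on a suitable $[0,T']$. The chain rule then yields
$$\tfrac{\d}{\d t}q(\tau(t))=\dot\tau(t)\,\nabla_\mathrm{g}\Psi(q(\tau(t)))=\frac{\alpha(N)}{\sqrt{\beta}}\,\frac{\nabla_\mathrm{g}\Psi(q(\tau(t)))}{\|\nabla_\mathrm{g}\Psi(q(\tau(t)))\|_\mathrm{g}},$$
so $t\mapsto q(\tau(t))$ satisfies the same ODE as $\tilde q$ with the same initial datum. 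Picard--Lindelöf uniqueness then forces $\tilde q(t)=q(\tau(t))$ on the common domain, which together with the constant-speed identity completes the proof.

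The argument is essentially a routine time-change for positively parallel, nowhere-vanishing vector fields, so no substantive obstacle arises; all content already sits in Thm. \ref{thm:graddescent} and the structural fact that the normalization of the gradient in its limit ODE is precisely what enforces constant $\mathrm{g}$-speed. The only bookkeeping subtlety is matching the parameter intervals: one may have to shrink $T$ or correspondingly extend $q$ to ensure $\tau([0,T])\subset[0,S]$, but this is automatic from the Thm. \ref{thm:graddescent} hypothesis that $\tilde q([0,T])\subset\mathcal{C}$ together with the fact that both curves trace the same orbit of $\nabla_\mathrm{g}\Psi$ emanating from $q(0)$.
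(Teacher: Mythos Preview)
Your argument is correct and is precisely the elementary time-change argument the paper has in mind; the paper in fact offers no proof at all beyond a \qed, treating the statement as an immediate consequence of the normalized-gradient ODE in Thm.~\ref{thm:graddescent}. Your write-up simply spells out that immediacy.
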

	
	If some goal process with law $P\in\mathscr{P}[\alphsetra]$ is given and $\Phi$ is set to be the negative of some divergence function relative to $P$, e.g. minus the relative entropy rate, then the gWF-process describes the parameter learning aspect of evolutionary generator reconstruction.\footnote{The inference of DFA-types is of course at least equally important. There seems to be a principled way of incorporating it into the present framework led by the observation that the set of finitary processes with arbitrarily large (finite) causal-state memory can be made into a cell complex. This topic will be explored on another occasion.}
	\begin{cor}\label{cor:asympttrajrelentrate}
		Let $q_\infty\in \M_\gamma\setminus\mathcal{D}_\gamma$ and $\mathcal{C}$ be the connected component of $\M_\gamma\setminus\mathcal{D}_\gamma$ containing $q_\infty$.
		The expectation trajectories in $\mathcal{C}$ of the gWF-process with fitness potential $\Phi=-\mathrm{h}(\Pr_{\rvalphra}(q_\infty) \parallel \cdot\ )$ converge to (constant-speed reparametrizations of) the integral curves of
		$$\dot{q}^{j,\a} = \frac{\pi_j(q_\infty)}{\pi_j(q)}(q^{j,\a}_\infty-q^{j,\a})\ ,\quad (j,\a)\in\edgeset(\gamma).$$
		\qed
	\end{cor}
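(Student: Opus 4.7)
The strategy is to combine Thm.~\ref{thm:graddescent} with the explicit form of the process-replicator equation for the negative-relative-entropy-rate potential provided by Cor.~\ref{cor:relentprocessrepl}.

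First, observe that the fitness potential $\Phi = -\mathrm{h}(\Pr_{\rvalphra}(q_\infty)\parallel \cdot\,)$ is well-defined on $\mathcal{C}$ by Thm.~\ref{thm:relentrate} (which also guarantees real-analyticity on $\mathcal{C}$, so $\Phi\circ\Pr_{\rvalphra}$ is in particular continuously differentiable). The function $\mathrm{h}_\gamma(q_\infty\parallel\cdot\,)$ is positive-definite with a unique zero at $q=q_\infty$; hence $\d(\Phi\circ\Pr_{\rvalphra})$ vanishes precisely at $q_\infty$. On the open set $\mathcal{C}\setminus\{q_\infty\}$ the hypothesis of Thm.~\ref{thm:graddescent} is therefore satisfied, and for any integral curve started at $q(0)\in\mathcal{C}\setminus\{q_\infty\}$ the theorem yields uniform convergence of the expectation trajectories $q^{L}$ on compact time intervals, as long as the limit curve stays in $\mathcal{C}\setminus\{q_\infty\}$, to a solution of the normalized gradient flow
\[
\dot{q}=\frac{\alpha(N)}{\sqrt{\beta}}\,\frac{\nabla_\mathrm{g}(\Phi\circ\Pr_{\rvalphra})(q)}{\|\nabla_\mathrm{g}(\Phi\circ\Pr_{\rvalphra})(q)\|_\mathrm{g}}.
\]

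Second, the orbits of this normalized flow coincide set-theoretically with the orbits of the unnormalized process-replicator flow $\dot{q}=\nabla_\mathrm{g}(\Phi\circ\Pr_{\rvalphra})$, the reparametrization being a positive-speed change of time. Applying Cor.~\ref{cor:relentprocessrepl} to this unnormalized gradient flow for our specific $\Phi$ immediately gives the ambient-coordinate form
\[
\dot{q}^{j,\a}=\frac{\pi_j(q_\infty)}{\pi_j(q)}\,(q^{j,\a}_\infty-q^{j,\a}),\qquad (j,\a)\in\edgeset(\gamma).
\]
Hence the limit of the expectation trajectories is, up to a positive reparametrization, an integral curve of the asserted ODE. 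The statement of Thm.~\ref{thm:graddescent} is already that $q^L$ is a constant-speed reparametrization of the $\mathrm{g}$-gradient-flow curve, which yields the claimed "constant-speed reparametrization'' formulation.

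The only mildly non-routine point is what happens if the limit curve approaches the critical point $q_\infty$ within time $T$: there $\d\Phi$ vanishes and Thm.~\ref{thm:graddescent} does not directly apply. One handles this by restricting to time intervals $[0,T]$ on which the unnormalized integral curve remains in $\mathcal{C}\setminus\{q_\infty\}$ (which is automatic for finite $T$ starting away from $q_\infty$, since the ODE above drives $q$ toward $q_\infty$ only asymptotically). Everything else is a direct chaining of the two cited results, so no additional estimates are required.
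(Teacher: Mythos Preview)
Your proposal is correct and follows exactly the route the paper intends: the paper states this corollary with a bare \qed, treating it as an immediate consequence of the preceding corollary (that gWF expectation-trajectory limits are constant-speed reparametrizations of process-replicator integral curves, via Thm.~\ref{thm:graddescent}) together with Cor.~\ref{cor:relentprocessrepl}. Your write-up is in fact more careful than the paper's, since you explicitly note the vanishing of $\d(\Phi\circ\Pr_{\rvalphra})$ at $q_\infty$ and restrict to time intervals avoiding it; the paper simply suppresses this point.
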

	 We saw in Thm. \ref{thm:relentrate} that  $\mathrm{h}(\Pr_{\rvalphra}(q_\infty) \parallel \Pr_{\rvalphra}(\cdot))$ is convex on $\mathcal{C}$. It follows that any asymptotic expectation trajectory $q(\cdot)$ with initial point $q(0)\in\mathcal{C}$ converges to $q_\infty$. On the other hand, it was suggested in Rem. \ref{rem:fluctuationmetric} that the energy $\mathrm{E}(q(\cdot))$ quantifies asymptotically the amount of infinitesimal empirical fluctuations necessary to generate $q(\cdot)$. The difference relative to an energy-minimizing geodesic with the same endpoints, $$\mathrm{E}(q(\cdot))-\mathrm{E}(q_\infty \parallel q(0))\geq 0,$$
	 intuitively measures ``how misdirected'' the expected parameter inference steps are in total compared to the optimal choices at every point. This quantity can be used to assess the effect of different fitness potentials as well as starting points on the gWF-scheme's efficiency. The last section illustrates the dependence on starting points for a few low-dimensional examples.

	\newpage
	
	\section{Examples}\label{sec:examples}
	
		\subsection{One Internal State}
	
	The case $\stateset=\{1\}$ corresponds to the replicator dynamics. We have $\M_\stateset=\tildeM_\stateset=\bar{\Sigma}^\alphset$. Every $\gamma\in\dfas_\stateset$, corresponding to a face of the closed simplex $\bar{\Sigma}^\alphset$, consists of several loops at the internal state $1$ (see Fig. \ref{fig:1statedfa}). We abbreviate $\alphset(\gamma)\equiv\alphset(\gamma,1)$ and $q^\a \equiv q^{1,\a}$. The output-process generated by some $q\in\M_\gamma$ is just an i.i.d. random sequence of letters from $\alphset(\gamma)$ and, accordingly, we have
	$$\mathrm{h}\big(\rvalphra_*\Pr(q')\parallel \rvalphra_*\Pr(q)\big)=\mathrm{h}_\gamma(q'\parallel q) =\sum_{\a \in \alphset(\gamma)} q'^\a \log\frac{q'^\a}{q^\a}.$$
	The resulting entropy rate tensor
	$$\mathrm{g}=\sum_{\a\in\alphset(\gamma)}\frac{1}{q^\a} (\d q^\a)^{\otimes 2}$$
	is the well-known Fisher information metric as previously noted. The transformation of ambient coordinates $q^\a(z)= (z^{\a})^2 $ from Rem. \ref{rem:coordinatechange} is particularly illuminating in this case. It yields
	$$\mathrm{g}=4 \sum_{\a\in\alphset(\gamma)} (\d z^\a)^{\otimes 2},$$
	which is just $4$ times the Euclidean metric of the ambient space, restricted to the first orthant of the unit hypersphere. Thus, in these ambient coordinates, the energy-minimizing geodesics are precisely great circle segments. On the other hand, considering the gWF-scheme with fitness potential $\Phi=-\mathrm{h}(\Pr_{\rvalphra}(q_\infty) \parallel \cdot\ )$, we see that the system of differential equations from Cor. \ref{cor:asympttrajrelentrate} decouples and, setting $\beta:=\alpha(N)^2$, it leads to the asymptotic expectation trajectories being the linear unit-speed curves
	$$q(t)= q(0)+t \frac{q_\infty-q(0)}{\|q_\infty-q(0)\|_{\mathrm{g}}} \ ,\quad \textnormal{for } 0\leq t\leq  \|q_\infty-q(0)\|_{\mathrm{g}}.$$
	Fig. \ref*{fig:1stateflow} illustrates the difference between these asymptotic trajectories and the geodesics ending at $q_\infty$ corresponding to asymptotically optimal inference paths.
	
	\begin{figure}[H]
			\centering
			\includegraphics[scale=0.9]{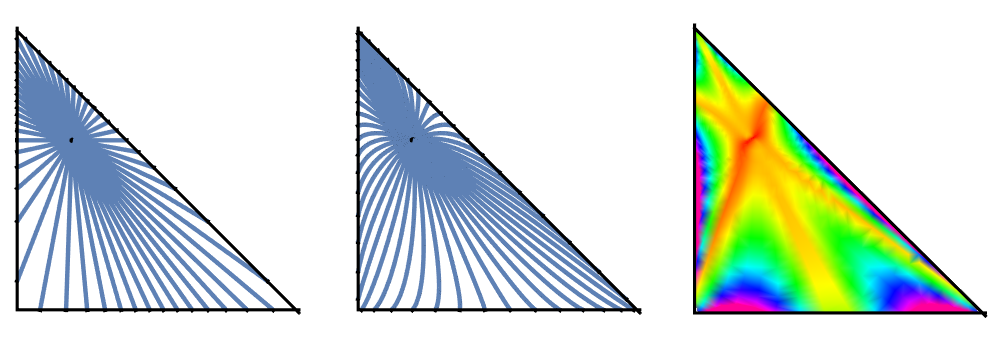}
			\caption{Inference dynamics for $|\stateset|=1,\ |\alphset|=3$. Left: The asymptotic expectation trajectories of the gWF-scheme with fitness potential $-\mathrm{h}(\Pr_{\protect\rvalphra}(q_\infty) \parallel \cdot\ )$ and $(q_\infty^{\mathsf{0}}, q_\infty^{\mathsf{1}}, q_\infty^{\mathsf{2}})=(0.2,0.6,0.2)$. Middle: Geodesics with respect to the entropy rate tensor $\mathrm{g}$ ending at $q_\infty$. Right: Colour-coded differences in path divergence from the respective initial points to $q_\infty$.}
			\label{fig:1stateflow}
	\end{figure}
	
	\subsection{Two Internal States on a Binary Alphabet}
	
	In the case $\stateset=\{1,2\}$ and $\alphset=\{\mathsf{0},\mathsf{1}\}$, the polytopal cell complex $\tildeM_\stateset$ is the subdivision of a torus into 16 top-dimensional rectangular faces and $\M_\stateset$ is a domain made up of $3\times 3$ top-dimensional faces with intermittent lower dimensional walls but without the boundary as illustrated in Fig. \ref*{fig:twostateuhmms}. There are in total 64 unifilar faces in $\tildeM_\stateset$ and the subdomain $\M_\stateset$ is composed of 25 of those faces. Every top-dimensional rectangular face $\M_\gamma$ is disconnected along a diagonal by $\mathcal{D}_\gamma=\set{q\in\M_\gamma}{q^{1,0}=q^{2,0}}$. Among the triangles which come from top-dimensional simple faces after removing $\mathcal{D}_\gamma$, there are only 5 types with qualitatively different  gWF-dynamics since all other domains can be obtained by either permuting internal states or swapping the alphabet letters $\mathsf{0}$ and $\mathsf{1}$. Asymptotic expectation trajectories of the gWF-scheme and geodesics with the same final points are illustrated in Fig. \ref*{fig:modeldynamics} for each of the qualitatively different domain types.
	
	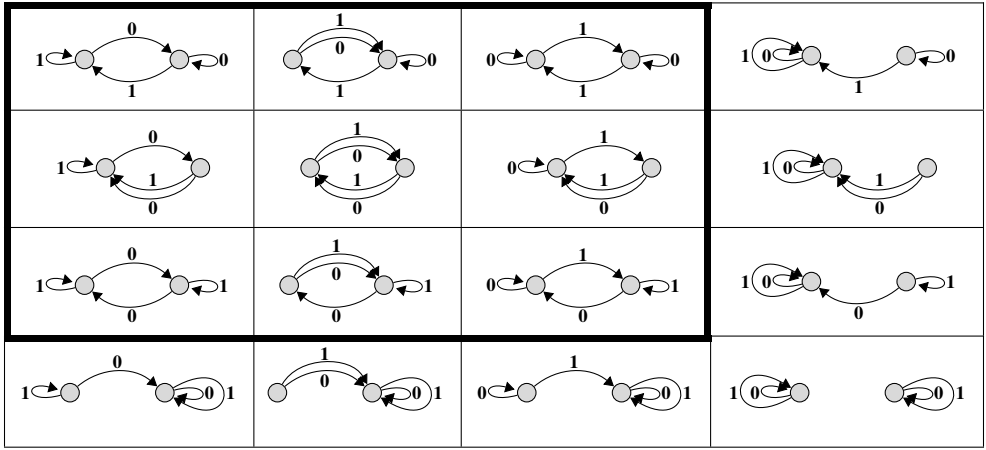
\begin{figure}[H]
		\centering		
		\begin{tabular}{|c|c|c|c|}
		\Xcline{1-3}{2.5pt} \cline{4-4}
		\multicolumn{1}{!{\vrule width 2.5pt}c|}{
		\gape{\begin{tikzpicture}[start chain = going right,
		-Triangle, every loop/.append style = {-Triangle}]
		\node[state, on chain, inner sep=0pt, minimum size=7pt,fill=gray!30]  (1) {};
		\node[state, on chain, inner sep=0pt, minimum size=7pt,fill=gray!30]  (2) {};
		
		\draw    (1)  edge[loop left, inner sep=1 pt, "1", font=\tiny\bfseries] (1);
		\draw    (1)  edge[bend left=40, inner sep=1 pt, "0", font=\tiny\bfseries]   (2);
		\draw    (2)  edge[loop right, inner sep=1 pt, "0", font=\tiny\bfseries]   (2);
		\draw    (2)  edge[bend left=40, inner sep=1 pt, "1", font=\tiny\bfseries]   (1);
		\end{tikzpicture}}
		}
		&
		\gape{
		\begin{tikzpicture}[start chain = going right,
		-Triangle, every loop/.append style = {-Triangle}]
		\node[state, on chain, inner sep=0pt, minimum size=7pt,fill=gray!30]  (1) {};
		\node[state, on chain, inner sep=0pt, minimum size=7pt,fill=gray!30]  (2) {};
		
		\draw    (1)  edge[bend left=40, inner sep=1 pt, "0", font=\tiny\bfseries,swap] (2);
		\draw    (1)  edge[bend left=60, inner sep=1 pt, "1", font=\tiny\bfseries]   (2);
		\draw    (2)  edge[loop right, inner sep=1 pt, "0", font=\tiny\bfseries]   (2);
		\draw    (2)  edge[bend left=40, inner sep=1 pt, "1", font=\tiny\bfseries]   (1);
		\end{tikzpicture}}
		&
		\multicolumn{1}{c!{\vrule width 2.5pt}}{
		\gape{\begin{tikzpicture}[start chain = going right,
		-Triangle, every loop/.append style = {-Triangle}]
		\node[state, on chain, inner sep=0pt, minimum size=7pt,fill=gray!30]  (1) {};
		\node[state, on chain, inner sep=0pt, minimum size=7pt,fill=gray!30]  (2) {};
		
		\draw    (1)  edge[bend left=40, inner sep=1 pt, "1", font=\tiny\bfseries] (2);
		\draw    (1)  edge[loop left, inner sep=1 pt, "0", font=\tiny\bfseries]   (1);
		\draw    (2)  edge[loop right, inner sep=1 pt, "0", font=\tiny\bfseries]   (2);
		\draw    (2)  edge[bend left=40, inner sep=1 pt, "1", font=\tiny\bfseries]   (1);
		\end{tikzpicture}}
		}
		&
		\gape{\begin{tikzpicture}[start chain = going right,
		-Triangle, every loop/.append style = {-Triangle}]
		\node[state, on chain, inner sep=0pt, minimum size=7pt,fill=gray!30]  (1) {};
		\node[state, on chain, inner sep=0pt, minimum size=7pt,fill=gray!30]  (2) {};
		
		\draw    (1)  edge[loop left, inner sep=1 pt, "0", font=\tiny\bfseries] (1);
		\draw    (1)  edge[loop left, out=210, in=150, looseness=20, inner sep=1 pt, "1", font=\tiny\bfseries]   (1);
		\draw    (2)  edge[loop right, inner sep=1 pt, "0", font=\tiny\bfseries]   (2);
		\draw    (2)  edge[bend left=40, inner sep=1 pt, "1", font=\tiny\bfseries]   (1);
		\end{tikzpicture}}
		\\\hline
		\multicolumn{1}{!{\vrule width 2.5pt}c|}{
		\gape{\begin{tikzpicture}[start chain = going right,
		-Triangle, every loop/.append style = {-Triangle}]
		\node[state, on chain, inner sep=0pt, minimum size=7pt,fill=gray!30]  (1) {};
		\node[state, on chain, inner sep=0pt, minimum size=7pt,fill=gray!30]  (2) {};
		
		\draw    (1)  edge[loop left, inner sep=1 pt, "1", font=\tiny\bfseries] (1);
		\draw    (1)  edge[bend left=40, inner sep=1 pt, "0", font=\tiny\bfseries]   (2);
		\draw    (2)  edge[bend left=40, inner sep=1 pt, "1", font=\tiny\bfseries,swap]   (1);
		\draw    (2)  edge[bend left=60, inner sep=1 pt, "0", font=\tiny\bfseries]   (1);
		\end{tikzpicture}}
		}
		&
		\gape{\begin{tikzpicture}[start chain = going right,
		-Triangle, every loop/.append style = {-Triangle}]
		\node[state, on chain, inner sep=0pt, minimum size=7pt,fill=gray!30]  (1) {};
		\node[state, on chain, inner sep=0pt, minimum size=7pt,fill=gray!30]  (2) {};
		
		\draw    (1)  edge[bend left=40, inner sep=1 pt, "0", font=\tiny\bfseries,swap] (2);
		\draw    (1)  edge[bend left=60, inner sep=1 pt, "1", font=\tiny\bfseries]   (2);
		\draw    (2)  edge[bend left=40, inner sep=1 pt, "1", font=\tiny\bfseries,swap]   (1);
		\draw    (2)  edge[bend left=60, inner sep=1 pt, "0", font=\tiny\bfseries]   (1);
		\end{tikzpicture}}
		&
		\multicolumn{1}{c!{\vrule width 2.5pt}}{
		\gape{\begin{tikzpicture}[start chain = going right,
		-Triangle, every loop/.append style = {-Triangle}]
		\node[state, on chain, inner sep=0pt, minimum size=7pt,fill=gray!30]  (1) {};
		\node[state, on chain, inner sep=0pt, minimum size=7pt,fill=gray!30]  (2) {};
		
		\draw    (1)  edge[bend left=40, inner sep=1 pt, "1", font=\tiny\bfseries] (2);
		\draw    (1)  edge[loop left, inner sep=1 pt, "0", font=\tiny\bfseries]   (1);
		\draw    (2)  edge[bend left=40, inner sep=1 pt, "1", font=\tiny\bfseries,swap]   (1);
		\draw    (2)  edge[bend left=60, inner sep=1 pt, "0", font=\tiny\bfseries]   (1);
		\end{tikzpicture}}
		}
		&
		\gape{\begin{tikzpicture}[start chain = going right,
		-Triangle, every loop/.append style = {-Triangle}]
		\node[state, on chain, inner sep=0pt, minimum size=7pt,fill=gray!30]  (1) {};
		\node[state, on chain, inner sep=0pt, minimum size=7pt,fill=gray!30]  (2) {};
		
		\draw    (1)  edge[loop left, inner sep=1 pt, "0", font=\tiny\bfseries] (1);
		\draw    (1)  edge[loop left, out=210, in=150, looseness=20, inner sep=1 pt, "1", font=\tiny\bfseries]   (1);
		\draw    (2)  edge[bend left=40, inner sep=1 pt, "1", font=\tiny\bfseries,swap]   (1);
		\draw    (2)  edge[bend left=60, inner sep=1 pt, "0", font=\tiny\bfseries]   (1);
		\end{tikzpicture}}
		\\\hline
		\multicolumn{1}{!{\vrule width 2.5pt}c|}{
		\gape{\begin{tikzpicture}[start chain = going right,
		-Triangle, every loop/.append style = {-Triangle}]
		\node[state, on chain, inner sep=0pt, minimum size=7pt,fill=gray!30]  (1) {};
		\node[state, on chain, inner sep=0pt, minimum size=7pt,fill=gray!30]  (2) {};
		
		\draw    (1)  edge[loop left, inner sep=1 pt, "1", font=\tiny\bfseries] (1);
		\draw    (1)  edge[bend left=40, inner sep=1 pt, "0", font=\tiny\bfseries]   (2);
		\draw    (2)  edge[bend left=40, inner sep=1 pt, "0", font=\tiny\bfseries]   (1);
		\draw    (2)  edge[loop right, inner sep=1 pt, "1", font=\tiny\bfseries]   (2);
		\end{tikzpicture}}
		}
		&
		\gape{\begin{tikzpicture}[start chain = going right,
		-Triangle, every loop/.append style = {-Triangle}]
		\node[state, on chain, inner sep=0pt, minimum size=7pt,fill=gray!30]  (1) {};
		\node[state, on chain, inner sep=0pt, minimum size=7pt,fill=gray!30]  (2) {};
		
		\draw    (1)  edge[bend left=40, inner sep=1 pt, "0", font=\tiny\bfseries,swap] (2);
		\draw    (1)  edge[bend left=60, inner sep=1 pt, "1", font=\tiny\bfseries]   (2);
		\draw    (2)  edge[bend left=40, inner sep=1 pt, "0", font=\tiny\bfseries]   (1);
		\draw    (2)  edge[loop right, inner sep=1 pt, "1", font=\tiny\bfseries]   (2);
		\end{tikzpicture}}
		&
		\multicolumn{1}{c!{\vrule width 2.5pt}}{
		\gape{\begin{tikzpicture}[start chain = going right,
		-Triangle, every loop/.append style = {-Triangle}]
		\node[state, on chain, inner sep=0pt, minimum size=7pt,fill=gray!30]  (1) {};
		\node[state, on chain, inner sep=0pt, minimum size=7pt,fill=gray!30]  (2) {};
		
		\draw    (1)  edge[bend left=40, inner sep=1 pt, "1", font=\tiny\bfseries] (2);
		\draw    (1)  edge[loop left, inner sep=1 pt, "0", font=\tiny\bfseries]   (1);
		\draw    (2)  edge[bend left=40, inner sep=1 pt, "0", font=\tiny\bfseries]   (1);
		\draw    (2)  edge[loop right, inner sep=1 pt, "1", font=\tiny\bfseries]   (2);
		\end{tikzpicture}}
		}
		&
		\gape{\begin{tikzpicture}[start chain = going right,
		-Triangle, every loop/.append style = {-Triangle}]
		\node[state, on chain, inner sep=0pt, minimum size=7pt,fill=gray!30]  (1) {};
		\node[state, on chain, inner sep=0pt, minimum size=7pt,fill=gray!30]  (2) {};
		
		\draw    (1)  edge[loop left, inner sep=1 pt, "0", font=\tiny\bfseries] (1);
		\draw    (1)  edge[loop left, out=210, in=150, looseness=20, inner sep=1 pt, "1", font=\tiny\bfseries]   (1);
		\draw    (2)  edge[bend left=40, inner sep=1 pt, "0", font=\tiny\bfseries]   (1);
		\draw    (2)  edge[loop right, inner sep=1 pt, "1", font=\tiny\bfseries]   (2);
		\end{tikzpicture}}
		\\\Xcline{1-3}{2.5pt} \cline{4-4}
		\gape{\begin{tikzpicture}[start chain = going right,
		-Triangle, every loop/.append style = {-Triangle}]
		\node[state, on chain, inner sep=0pt, minimum size=7pt,fill=gray!30]  (1) {};
		\node[state, on chain, inner sep=0pt, minimum size=7pt,fill=gray!30]  (2) {};
		
		\draw    (1)  edge[loop left, inner sep=1 pt, "1", font=\tiny\bfseries] (1);
		\draw    (1)  edge[bend left=40, inner sep=1 pt, "0", font=\tiny\bfseries]   (2);
		\draw    (2)  edge[loop left, out=30, in=330, looseness=20,inner sep=1 pt, "1", font=\tiny\bfseries]   (2);
		\draw    (2)  edge[loop right, inner sep=1 pt, "0", font=\tiny\bfseries]   (2);
		\end{tikzpicture}}
		&
		\gape{\begin{tikzpicture}[start chain = going right,
		-Triangle, every loop/.append style = {-Triangle}]
		\node[state, on chain, inner sep=0pt, minimum size=7pt,fill=gray!30]  (1) {};
		\node[state, on chain, inner sep=0pt, minimum size=7pt,fill=gray!30]  (2) {};
		
		\draw    (1)  edge[bend left=40, inner sep=1 pt, "0", font=\tiny\bfseries,swap] (2);
		\draw    (1)  edge[bend left=60, inner sep=1 pt, "1", font=\tiny\bfseries]   (2);
		\draw    (2)  edge[loop left, out=30, in=330, looseness=20,inner sep=1 pt, "1", font=\tiny\bfseries]   (2);
		\draw    (2)  edge[loop right, inner sep=1 pt, "0", font=\tiny\bfseries]   (2);
		\end{tikzpicture}}
		&
		\gape{\begin{tikzpicture}[start chain = going right,
		-Triangle, every loop/.append style = {-Triangle}]
		\node[state, on chain, inner sep=0pt, minimum size=7pt,fill=gray!30]  (1) {};
		\node[state, on chain, inner sep=0pt, minimum size=7pt,fill=gray!30]  (2) {};
		
		\draw    (1)  edge[bend left=40, inner sep=1 pt, "1", font=\tiny\bfseries] (2);
		\draw    (1)  edge[loop left, inner sep=1 pt, "0", font=\tiny\bfseries]   (1);
		\draw    (2)  edge[loop left, out=30, in=330, looseness=20,inner sep=1 pt, "1", font=\tiny\bfseries]   (2);
		\draw    (2)  edge[loop right, inner sep=1 pt, "0", font=\tiny\bfseries]   (2);
		\end{tikzpicture}}
		&
		\gape{\begin{tikzpicture}[start chain = going right,
		-Triangle, every loop/.append style = {-Triangle}]
		\node[state, on chain, inner sep=0pt, minimum size=7pt,fill=gray!30]  (1) {};
		\node[state, on chain, inner sep=0pt, minimum size=7pt,fill=gray!30]  (2) {};
		
		\draw    (1)  edge[loop left, inner sep=1 pt, "0", font=\tiny\bfseries] (1);
		\draw    (1)  edge[loop left, out=210, in=150, looseness=20, inner sep=1 pt, "1", font=\tiny\bfseries]   (1);
		\draw    (2)  edge[loop left, out=30, in=330, looseness=20,inner sep=1 pt, "1", font=\tiny\bfseries]   (2);
		\draw    (2)  edge[loop right, inner sep=1 pt, "0", font=\tiny\bfseries]   (2);
		\end{tikzpicture}}
		\\\hline
		\end{tabular}
		\caption{The cell complex $\tildeM_\stateset$. Opposite sides are supposed to be identified and each top-dimensional face shows its DFA-type. The upper-left $3\times 3$ subdomain (without the bold boundary) is the subset of simple unifilar HMMs $\M_\stateset$.}
		\label{fig:twostateuhmms}
	\end{figure}
	
	\subsection{More than Two Internal States}
	
	Here, it is more difficult to visualize the polytopal complex $\tildeM_\stateset$. It is comprised of $|\stateset|^{|\stateset||\alphset|}$ top-dimensional polytopes, each of which is the $|\stateset|$-th power of a $(|\alphset|-1)$-dimensional simplex, resulting in a total of $((|\stateset|+1)^{|\alphset|}-1)^{|\stateset|}$ faces resp. DFA-types. The number of simple DFA-types also grows at least like $\mathcal{O}(|\stateset|^{|\stateset|})$ as demonstrated in \cite{RADKE1965377}. We illustrate the asymptotic dynamics for the family of binary \emph{Nemo processes} which have been investigated for example in \cite{mahoney2009information}. Their DFA-type $\gamma$ is given by
	
	\begin{center}
	\begin{tikzpicture}[-Triangle, every loop/.append style = {-Triangle}]
	\node[state, inner sep=0pt, minimum size=7pt,fill=gray!30]  (1) {};
	\node[state, below left=1, inner sep=0pt, minimum size=7pt,fill=gray!30]  (2) {};
	\node[state, below right=1, inner sep=0pt, minimum size=7pt,fill=gray!30]  (3) {};
	
	\draw    (1)  edge[loop above, out=45, in=135, looseness=10, inner sep=1 pt, "1", font=\tiny\bfseries, swap] (1);
	\draw    (1)  edge[bend right=40, inner sep=1 pt, "0", font=\tiny\bfseries, swap]   (2);
	\draw    (2)  edge[bend right, inner sep=1 pt, "0", font=\tiny\bfseries, swap]   (3);
	\draw    (3)  edge[bend left=40, inner sep=1 pt, "1", font=\tiny\bfseries]   (1);
	\draw    (3)  edge[bend right=40, inner sep=1 pt, "0", font=\tiny\bfseries, swap]   (1);
	\end{tikzpicture}
\end{center}
	
	It corresponds to a 2-dimensional rectangular face $\M_\gamma$ whose degenerate subset $\mathcal{D}_\gamma$ is empty. The asymptotic gWF-expectation dynamics on $\M_\gamma$ is shown in Fig. \ref*{fig:nemoflow}.

	\begin{figure}[H]
		\centering
		\begin{tabular}{c}
			\includegraphics[scale=0.9]{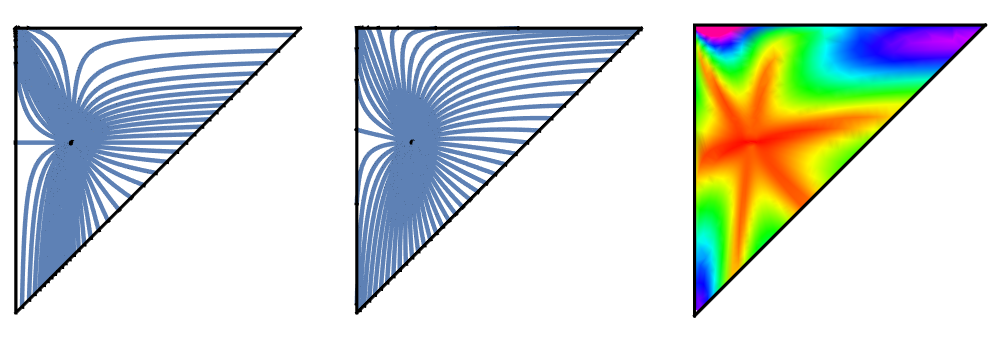}\\
			\includegraphics[scale=0.9]{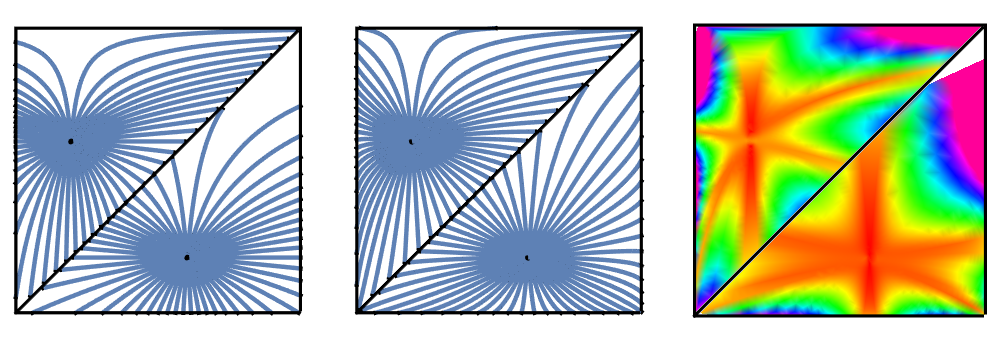}\\
			\includegraphics[scale=0.9]{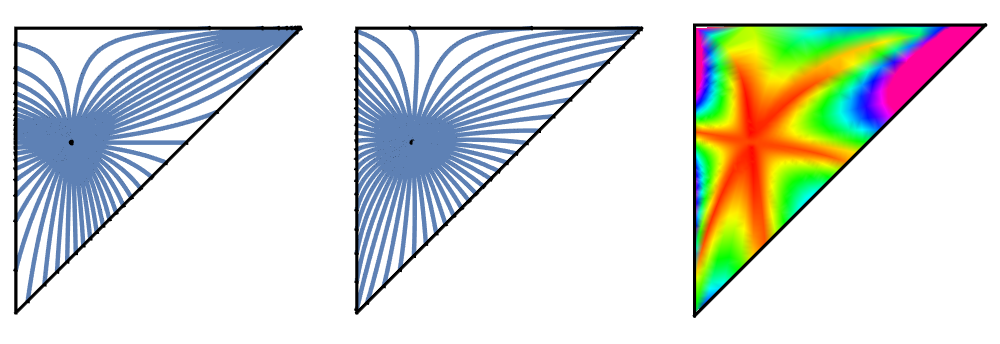}\\
			\includegraphics[scale=0.9]{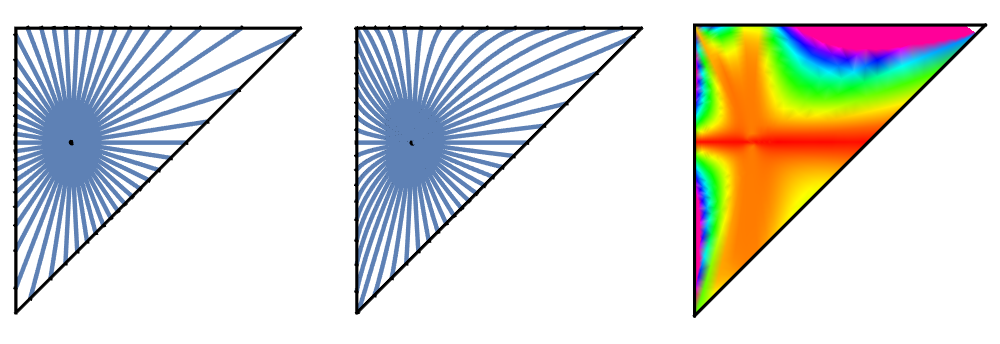}
		\end{tabular}
		\caption{Qualitatively different dynamics for $|\stateset|=2,\ |\alphset|=2$. Left column: Trajectories of the asymptotic gWF-expectation dynamics with fitness potential $-\mathrm{h}(\Pr_{\protect\rvalphra}(q_\infty) \parallel \cdot\ )$. Middle column: Geodesics with respect to the entropy rate tensor $\mathrm{g}$ ending at $q_\infty$. Right column: Colour-coded differences in path divergence from the respective initial points to $q_\infty$. The rows correspond to DFA-types $\gamma(1,0)\gamma(1,1)\gamma(2,0)\gamma(2,1)= 2121$ (1st row), $2221$ (2nd row), $1221$ (3rd row), $2211$ (4th row). Apart from DFA-type $2221$, all plots show the subdomain $q^{1,\mathsf{0}}<q^{2,\mathsf{0}}$ of $\M_\gamma$. The value of $q_\infty$ is $(0.2,0.6)$ except for DFA-type $2221$ where it is $(0.2,0.6)$ above the diagonal and $(0.6,0.2)$ under the diagonal. Interestingly, there are initial points at the boundaries from where the asymptotic expectation trajectory coincides with a geodesic (inside the red corridors), i.e. all expected parameter inference steps are asymptotically optimal.}
		\label{fig:modeldynamics}
	\end{figure}
	
	\begin{figure}
		\includegraphics[scale=0.9]{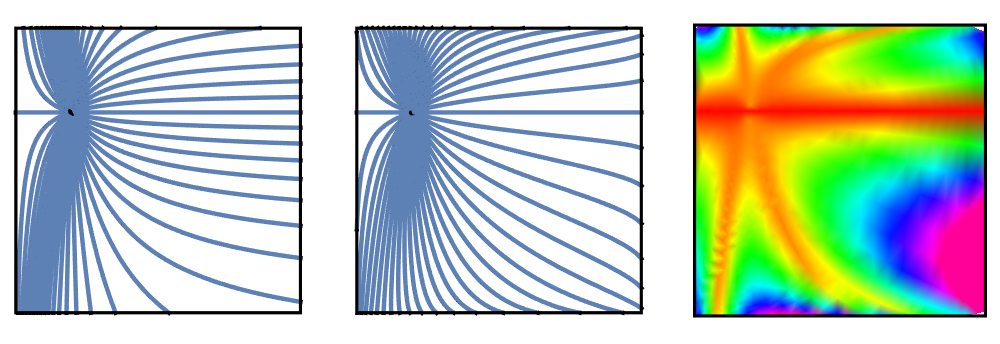}
		\caption{Asymptotic parameter inference dynamics for the Nemo processes. Left: The asymptotic gWF-expectation trajectories with fitness potential $-\mathrm{h}(\Pr_{\protect\rvalphra}(q_\infty) \parallel \cdot\ )$ and $(q_\infty^{\mathsf{0}}, q_\infty^{\mathsf{1}})=(0.2,0.6)$. Middle: Geodesics with respect to the entropy rate tensor $\mathrm{g}$ ending at $q_\infty$. Right: Colour-coded differences in path divergence from the respective initial points to $q_\infty$.}
		\label{fig:nemoflow}
	\end{figure}

	\newpage
	
	\appendix
	
		\addcontentsline{toc}{section}{Appendix}
	
	\addtocontents{toc}{\protect\setcounter{tocdepth}{-1}}

	\section{Proof of Thm \ref*{thm:relentrate}}\label{subsec:proofrelentrate}
	
	Let $\transtup, \transtup'\in\mathcal{C}$. The \emph{cross-entropy} of a random variable $\mathrm{X}: \edgesetra\to\mathscr{X}$ (with $|\mathscr{X}|<\infty$) is
	$$\Cr(q'\parallel q)(\mathrm{X}):=-\sum_{x\in\mathscr{X}}\Pr_\mathrm{X}(q')[x] \log\Pr_\mathrm{X}(q)[x].$$
	We have
	\begin{equation}\label{eq:relentbycrossent}
	\mathrm{h}(\Pr_{\rvalphra}(q')\parallel \Pr_{\rvalphra}(q))=\lim_{L\to\infty} \frac1L \big(\Cr(q'\parallel q)(\rvalph_{1:L})-\Cr(q'\parallel q')(\rvalph_{1:L})\big).
	\end{equation}
	The conditional cross-entropy $\Cr(q'\parallel q)(\mathrm{X}|\mathrm{Y})$ is defined accordingly as
	$$\Cr(q'\parallel q)(\mathrm{X}|\mathrm{Y}):=-\sum_{x,y}\Pr_{(\mathrm{X},\mathrm{Y})}(q')[x,y]\log\Pr_{\mathrm{X}|\mathrm{Y}}(q)[x | y].$$
	There is a chain rule, reading
	$$\Cr(q'\parallel q)(\mathrm{X},\mathrm{Y})=\Cr(q'\parallel q)(\mathrm{X}|\mathrm{Y})+\Cr(q'\parallel q)(\mathrm{Y}),$$
	which immediately implies
	\begin{lem}\label{lem:Cesaro}
		If $\lim_{L\to\infty}\Cr(q'\parallel q)(\rvalph_{L} | \rvalph_{1:L-1})$ exists, we have
		$$\lim_{L\to\infty}\frac1L \Cr(q'\parallel q)(\rvalph_{1:L})=\lim_{L\to\infty}\Cr(q'\parallel q)(\rvalph_{L} | \rvalph_{1:L-1}).$$
	\end{lem}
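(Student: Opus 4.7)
The plan is to reduce the claim to the classical Cesàro mean theorem via a telescoping argument. First I would establish the identity
\[
\Cr(q'\parallel q)(\rvalph_{1:L}) \;=\; \sum_{l=1}^{L} \Cr(q'\parallel q)(\rvalph_l \mid \rvalph_{1:l-1}),
\]
by induction on $L$, with the convention that the $l=1$ summand is the unconditional cross-entropy $\Cr(q'\parallel q)(\rvalph_1)$. The induction step is an immediate application of the chain rule stated just above the lemma, taking $\mathrm{X} := \rvalph_L$ and $\mathrm{Y} := \rvalph_{1:L-1}$, which yields
\[
\Cr(q'\parallel q)(\rvalph_{1:L}) = \Cr(q'\parallel q)(\rvalph_L \mid \rvalph_{1:L-1}) + \Cr(q'\parallel q)(\rvalph_{1:L-1}),
\]
after which the induction hypothesis applied to the second summand completes the step.

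Second, I would set $a_l := \Cr(q'\parallel q)(\rvalph_l \mid \rvalph_{1:l-1})$. The hypothesis of the lemma asserts that $(a_l)_{l \in \N}$ is a convergent real sequence; denote its limit by $a$. By the Cesàro mean theorem, the sequence of arithmetic means $\tfrac{1}{L}\sum_{l=1}^{L} a_l$ then also converges to $a$. Combined with the telescoping identity above, this gives
\[
\lim_{L\to\infty} \frac{1}{L}\Cr(q'\parallel q)(\rvalph_{1:L}) \;=\; \lim_{L\to\infty} \frac{1}{L}\sum_{l=1}^{L} a_l \;=\; a \;=\; \lim_{L\to\infty} \Cr(q'\parallel q)(\rvalph_L \mid \rvalph_{1:L-1}),
\]
which is the desired identity.

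There is essentially no obstacle: the proof is purely formal, relying only on the additive chain rule for cross-entropy (stated in the text immediately preceding the lemma) together with an elementary fact about convergent sequences. The only minor care needed is to handle the $l=1$ base case of the telescoping sum, which reduces to a tautology once one agrees to treat the conditioning on the empty collection of prior variables as vacuous.
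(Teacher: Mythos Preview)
Your proof is correct and follows essentially the same approach as the paper: iterate the chain rule to write $\Cr(q'\parallel q)(\rvalph_{1:L})$ as a telescoping sum of conditional cross-entropies, then invoke the Cesàro mean theorem. The paper compresses this into a single sentence, but the content is identical.
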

	\begin{proof}Iteration of the chain rule converts the left-hand side into a Cesàro-mean.	\end{proof}
	
	On the other hand, again using the chain rule we can rewrite $\Cr(q'\parallel q)(\rvalph_{1:L+1},\rvstate_{L+1})$ in the two different ways:
	\begin{align*}
	&\Cr(q'\parallel q)(\rvstate_{L+1} | \rvalph_{1:L+1})+ \Cr(q'\parallel q)(\rvalph_{L+1} | \rvalph_{1:L})+
	\Cr(q'\parallel q)(\rvalph_{1:L})\\
	=&\Cr(q'\parallel q)(\rvalph_{L+1} | \rvstate_{L+1},\rvalph_{1:L})+ \Cr(q'\parallel q)(\rvstate_{L+1} | \rvalph_{1:L})+
	\Cr(q'\parallel q)(\rvalph_{1:L}).
	\end{align*}
	On the second line we can additionally use the hidden Markov-property from Def. \ref{defn:markovcondition}.(ii) to rewrite the first term as $\Cr(q'\parallel q)(\rvalph_{L+1} | \rvstate_{L+1})$ and we rearrange the above equation into
	$$
	\Cr(q'\parallel q)(\rvalph_{L+1} | \rvalph_{1:L})-\Cr(q'\parallel q)(\rvalph_{L+1} | \rvstate_{L+1}) = \Cr(q'\parallel q)(\rvstate_{L+1} | \rvalph_{1:L}) - \Cr(q'\parallel q)(\rvstate_{L+1} | \rvalph_{1:L+1}).
	$$
	The right-hand side vanishes in the limit $L\to\infty$, provided that
	\begin{equation} \label{eq:synchronizationbound1}
	\lim_{L\to\infty} \Cr(q'\parallel q)(\rvstate_{L+1} | \rvalph_{1:L})=0,
	\end{equation}
	\begin{equation} \label{eq:synchronizationbound2}
	\lim_{L\to\infty} \Cr(q'\parallel q)(\rvstate_{L+1} | \rvalph_{1:L+1}) = 0.
	\end{equation}
	Assuming this for a moment to be true, we would have
	$$\lim_{L\to\infty} \Cr(q'\parallel q)(\rvalph_{L} | \rvalph_{1:L-1}) = \lim_{L\to\infty} \Cr(q'\parallel q)(\rvalph_{L} | \rvstate_{L})$$
	Together with Lem. \ref{lem:Cesaro} and stationarity:
	\begin{align*}
	\lim_{L\to\infty}\frac1L \Cr(q'\parallel q)(\rvalph_{1:L}) &= \lim_{L\to\infty} \Cr(q'\parallel q)(\rvalph_{L} | \rvstate_{L})\\
	&= \Cr(q'\parallel q)(\rvalph_{1} | \rvstate_{1})\\
	&= -\sum_{j\in\stateset} \pi_j(q') \sum_{\a\in\alphset} q'^{j,\a} \log q^{j,\a}.
	\end{align*}
	Taking into account (\ref{eq:relentbycrossent}), this would prove the theorem.
	Therefore it remains to verify (\ref{eq:synchronizationbound1}) and (\ref{eq:synchronizationbound2}).
	We begin by proving an extension of the \emph{non-exact machine synchronization theorem} of \cite{travers2011asymptotic}:
	\begin{lem}\label{lem:compsync}
		Let $\mathcal{K}$ be a compact connected subset of $\M_\gamma\setminus \mathcal{D}_\gamma$. There exist constants $0\leq b<1$, $B\geq 0$, $L_0\in\N$ and, for any $L\geq L_0$, a subset $\mathscr{W}_L\subset\alphset_{1:L}(\gamma)$ and a map
		$$\iota_{L+1}:\ \bigcup_{L\in\N} \mathscr{W}_L \to \stateset$$
		such that:
		\begin{align*}
			\sup_{q\in\mathcal{K}}\Pr(q)[\rvalph_{1:L}\notin\mathscr{W}_L]&\leq Bb^L,\\
			\forall \a_{1:L}\in\mathscr{W}_L:\quad \inf_{q\in\mathcal{K}} \Pr_{\rvstate_{L+1}|\rvalph_{1:L}}(q) [\iota_{L+1}(\a_{1:L})| \a_{1:L}] &\geq 1-b^L\geq \frac12.
		\end{align*}
	\end{lem}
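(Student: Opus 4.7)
The plan is to bootstrap the pointwise non-exact machine synchronization theorem of \cite{travers2011asymptotic} from single points $q\in \M_\gamma\setminus\mathcal{D}_\gamma$ to the compact connected set $\mathcal{K}$, in three steps: pointwise, local, global.

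First, I would invoke \cite{travers2011asymptotic} at each $q_0\in\mathcal{K}$. This yields a point-dependent triple $(b_{q_0},B_{q_0},L_{0,q_0})$, together with sets $\mathscr{W}_{L,q_0}\subset\alphset_{1:L}(\gamma)$ and maps $\iota_{L+1,q_0}$ verifying the desired estimates at the single point $q_0$. The value $\iota_{L+1,q_0}(\a_{1:L})$ is the unique state in $\stateset$ carrying the dominant mass of the belief $\Pr_{\rvstate_{L+1}|\rvalph_{1:L}}(q_0)[\,\cdot\,|\a_{1:L}]$ whenever $\a_{1:L}\in\mathscr{W}_{L,q_0}$.

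Second, I would upgrade the pointwise statement to a neighborhood $U_{q_0}\subset\M_\gamma\setminus\mathcal{D}_\gamma$. Two ingredients feed into this. \textbf{(i)} For each fixed word $\a_{1:L}$ and each state $k\in\stateset$, the function $q\mapsto \Pr_{\rvstate_{L+1}|\rvalph_{1:L}}(q)[k|\a_{1:L}]$ is rational in the ambient coordinates of $q$ and well-defined on the open locus where $\Pr_{\rvalph_{1:L}}(q)[\a_{1:L}]>0$, a combinatorial condition depending only on $\gamma$. Hence, if at $q_0$ the posterior has a strict mode of mass $\geq 1-b_{q_0}^L>1/2$, then at every $q$ sufficiently close to $q_0$ the mode is located at the same state and retains mass $\geq 1-b^L$ for any $b$ slightly above $b_{q_0}$. \textbf{(ii)} The belief contraction rate, which by Prop. \ref{prop:minimality} is strictly below $1$ at every CSM of $\M_\gamma\setminus\mathcal{D}_\gamma$, depends continuously on $q$ and thus remains uniformly below $1$ on some neighborhood $U_{q_0}$. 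The non-synchronization probability under $\Pr(q)$ therefore decays uniformly on $U_{q_0}$ at a rate controlled by $b$, and the local map $\iota_{L+1}$ on $U_{q_0}$ is the common strict argmax ensured by \textbf{(i)}.

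Third, I would globalize via compactness and connectedness. A finite subcover $U_1,\ldots,U_n$ of $\mathcal{K}$ by Step~2-neighborhoods with constants $(b_i,B_i)$ and local maps $\iota_{L+1}^{(i)}$ gives $b:=\max_i b_i<1$, $B:=n\max_i B_i$ and $\mathscr{W}_L:=\bigcap_i\mathscr{W}_{L,i}$; a union bound then yields $\sup_{q\in\mathcal{K}}\Pr(q)[\rvalph_{1:L}\notin\mathscr{W}_L]\leq Bb^L$. To assemble a single $\iota_{L+1}$, note that for each $\a_{1:L}\in\mathscr{W}_L$ the value $\iota_{L+1,q}(\a_{1:L})\in\stateset$ is the strict argmax of a posterior continuous in $q$, hence locally constant in $q$. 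Since $\stateset$ is discrete and $\mathcal{K}$ is connected, this locally constant $\stateset$-valued function must be globally constant on $\mathcal{K}$, so the local maps $\iota_{L+1}^{(i)}$ agree on overlaps and assemble into a well-defined $\iota_{L+1}$.

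The main obstacle is Step~2. A naive attempt to transfer non-synchronization probability bounds from $q_0$ to a neighborhood via $\mathrm{TV}(\Pr(q),\Pr(q_0))$ would fail because this total-variation distance typically grows with $L$. The proper route is to re-examine the proof of \cite{travers2011asymptotic} itself and observe that the belief dynamics is driven by a stochastic composition of rational maps whose contraction coefficients depend continuously on $q$; uniformity of these coefficients on a small neighborhood, rather than any measure-change argument on the sample paths, is what yields the shared exponential rate $b$.
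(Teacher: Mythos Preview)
Your three-step skeleton (pointwise Travers--Crutchfield, then local via continuity of the rational posteriors, then global via a finite subcover and connectedness to glue the $\iota_{L+1}$) is exactly the paper's argument. The only difference is that you propose in Step~2(ii) to re-open the synchronization proof and track the contraction rate continuously in $q$, whereas the paper treats the pointwise theorem as a black box: it simply takes the finitely many base points $q_1,\ldots,q_n$ from the cover, sets $b:=\max_m b_{q_m}$, $B:=\sum_m B_{q_m}$, $\mathscr{W}_L:=\bigcap_m\mathscr{W}_{q_m,L}$, and uses the neighborhoods only to force the posterior at the designated state above $\tfrac12$ (hence pin down a common argmax), not to re-derive the exponential rate locally.
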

	\begin{proof}		
		This lemma was shown in \cite{travers2011asymptotic} (Thm. 1.1) for the special case of single-point sets $\mathcal{K}=\{q\}$ and without the clause about $L_0$. We thus assume it to be true in that case and write $b_q, B_q, \mathscr{W}_{q,L}, \iota_{q,L+1}$ for the respective items. Furthermore, we set $L_{0,q}:=\max\{1, \lceil \log_{b_q} \frac12 \rceil +1 \}$. Due to the function $\Pr_{\rvstate_{L+1}|\rvalph_{1:L}}(\cdot)[ \iota_{q,L+1}(\a_{1:L}) | \a_{1:L}]$ being rational and hence continuous around $q$, there is an open neighbourhood $q\in\mathcal{V}(q)\subset \M_\gamma\setminus\mathcal{D}_\gamma$ such that for all $q'\in\mathcal{V}(q)$ and $L\geq L_{0,q}$, $\a_{1:L}\in \mathscr{W}_{q,L}$:
		$$\Pr_{\rvstate_{L+1}|\rvalph_{1:L}}(q')[\iota_{q,L+1}(\a_{1:L}) | \a_{1:L}]> \frac12.$$
		We consider the open cover $\mathcal{K}\subset \bigcup_{q\in\mathcal{K}}\mathcal{V}(q)$. Due to compactness there exist finitely many points $q_1,\ldots,q_n\in\mathcal{K}$ such that
		$\mathcal{K}\subset \bigcup_{m=1}^n \mathcal{V}(q_m)$. We set $L_0:=\max_{1\leq m\leq n} L_{0,q_m}$ and for $L\geq L_0$: $\mathscr{W}_L:=\bigcap_{m=1}^n \mathscr{W}_{q_m,L}$. Observe that by construction, for any  $q'\in\mathcal{V}(q_m)$ and  $\a_{1:L}\in\mathscr{W}_L$, there is exactly one $j\in\stateset$ such that
		$\Pr_{\rvstate_{L+1}|\rvalph_{1:L}}(q')[ j | \a_{1:L} ] > \frac12,$
		namely $j=\iota_{q_m,L+1}(\a_{1:L})$. If $\mathcal{V}(q_l)\cap \mathcal{V}(q_m)\neq \varnothing$, we can apply this reasoning to $q'$ in this intersection and obtain that $\iota_{q_l,L+1}(\a_{1:L})=\iota_{q_m,L+1}(\a_{1:L})$.
		Since the finite open cover $\bigcup_m \mathcal{V}(q_m)$ is connected, we can conclude by transitivity that the internal state $\iota_{q,L+1}(\a_{1:L})$ does not depend on $q\in\mathcal{K}$ for any $\a_{1:L}\in \mathscr{W}_L$. This defines the function $\iota_{L+1}:  \mathscr{W}_L\to\stateset$. The validity of the two bounds from the lemma can easily be verified with $$b:=\max_{1\leq m\leq n} b_{q_m}\quad \textnormal{and}\quad B:=\sum_{m=1}^n B_{q_m}.$$
	\end{proof}
	
	We shall now use Lemma \ref{lem:compsync} to compute the limit (\ref{eq:synchronizationbound1}). Choose some compact connected set $\mathcal{K}\subset \mathcal{C}$ with $q,q'\in\mathcal{K}$ and denote by $b, B, L_0, \mathscr{W}_L, \iota_{L+1}$ the respective items from the lemma. Moreover, there is a constant $0\leq M <\infty$ such that we have the bound
	\begin{align*}
	\max_{j,\a_{1:L}}\left(-\log \left(\Pr_{\rvstate_{L+1}|\rvalph_{1:L}}(q)[ j | \a_{1:L} ]\right)\right) &= - \min_{j,\a_{1:L}}\log\frac{\pi(q)\trans{\a_{1:L}}\ket{j}}{\pi(q)\trans{\a_{1:L}}\one}\\
	&\leq -\log\left(\min_{j\in\stateset}\pi_j(q)\left(\min_{(j,\a)\in\edgeset(\gamma)} q^{j,a}\right)^L\right)\\
	&\leq L M,
	\end{align*}
	for any $q\in\mathcal{K}$. We now apply this bound together with Lem. \ref{lem:compsync}. For better readability we omit the subscripts in $\Pr_{\mathrm{X|\mathrm{Y}}}$:
	\begin{align*}
	&\lim_{L\to\infty} \Cr(q'\parallel q)(\rvstate_{L+1} | \rvalph_{1:L}) \leq\\
	&\leq -\lim_{L\to\infty} \sum_{\a_{1:L}\in\mathscr{W}_L} \Pr(q')[ \a_{1:L}] \sum_{i_{L+1}\in\stateset} \Pr(q')[ i_{L+1} | \a_{1:L} ]\ \log\Pr(q)[ i_{L+1} | \a_{1:L} ]\\
	&\quad +\underbrace{\lim_{L\to\infty} L M B b^L}_{=0}.
	\end{align*}
	The Taylor estimate $-\log(1-x)= x+\mathcal{O}(x^2)\leq M_0 x$, holding for some $M_0>0$ and $\frac12\leq 1-x \leq 1$, yields
	$-\log \Pr(q) [\iota_{L+1}(\a_{1:L}) |  \a_{1:L}] \leq -\log(1-b^L) \leq M_0 b^L,$
	for any $\a_{1:L}\in \mathscr{W}_L$ and we can complete the proof of (\ref{eq:synchronizationbound1}):
	\begin{align*}
	\lim_{L\to\infty} \Cr(q'\parallel q)(\rvstate_{L+1} | \rvalph_{1:L}) &\leq \lim_{L\to\infty} \sum_{\a_{1:L}\in\mathscr{W}_L} \Pr(q')[\a_{1:L}]\ \Pr(q')[ \iota_{L+1}(\a_{1:L}) | \a_{1:L} ]\ M_0 b^L\\
	&\quad +\lim_{L\to\infty} \sum_{\a_{1:L}\in\mathscr{W}_L} \Pr(q')[\a_{1:L}] \sum_{i_{L+1}\neq \iota_{L+1}(\a_{1:L})} \Pr(q')[ i_{L+1} | \a_{1:L} ]\ L M\\
	&\leq \lim_{L\to\infty} M_0 b^L + b^L L M\\
	&=0.
	\end{align*}
	
	To compute the limit in (\ref{eq:synchronizationbound2}), we need to make an additional argument.
	\begin{lem}\label{lem:exactsync}
		Let $\mathcal{K}$ be a compact connected subset of $\M_\gamma$. There exist constants $0\leq b_0<1$ and $B_0>0$ and, for any $L\in\N$, a subset $\mathscr{W}_{0,L}\subset \alphset_{1:L}(\gamma)$ such that for any $q\in\mathcal{K}$:
		$$\Pr(q)[ \rvalph_{1:L}\notin \mathscr{W}_{0,L} ] \leq B_0 b_0^L$$
		and
		$$
		\Pr_{\rvstate_L|\rvalph_{1:L}}(q)[ j | a_{1:L} ] = \Pr_{\rvstate_{L+1}|\rvalph_{1:L}}(q)[ \gamma(j,\a_L) | a_{1:L} ],
		$$
		for any $j\in\stateset$ and $a_{1:L}\in\mathscr{W}_{0,L}$.
	\end{lem}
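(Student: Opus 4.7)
I would take $\mathscr{W}_{0,L}$ to be the set of output sequences $\a_{1:L}\in\alphset_{1:L}(\gamma)$ for which the map $\gamma(\cdot,\a_L)$ restricted to the posterior support $\mathcal{S}_L(\a_{1:L}):=\{j\in\stateset:\Pr_{\rvstate_L|\rvalph_{1:L}}(q)[j|\a_{1:L}]>0\}$ is injective. For $q\in\M_\gamma$ this support is determined by the DFA-type $\gamma$ alone (since all edges in $\edgeset(\gamma)$ carry positive probability), so $\mathscr{W}_{0,L}$ does not depend on $q$. On $\mathscr{W}_{0,L}$ the claimed equation is immediate from a direct case analysis: by unifilarity, $\Pr_{\rvstate_{L+1}|\rvalph_{1:L}}(q)[\gamma(j,\a_L)|\a_{1:L}]=\sum_{j'\in\mathcal{S}_L(\a_{1:L})}\Pr_{\rvstate_L|\rvalph_{1:L}}(q)[j'|\a_{1:L}]\,\mathbbm{1}[\gamma(j',\a_L)=\gamma(j,\a_L)]$, and the injectivity of $\gamma(\cdot,\a_L)$ on $\mathcal{S}_L$ collapses the right-hand sum to the single term $\Pr_{\rvstate_L|\rvalph_{1:L}}(q)[j|\a_{1:L}]$ (with both sides vanishing in the case $j\notin\mathcal{S}_L$).

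\emph{Tail bound.} To bound $\Pr(q)[\rvalph_{1:L}\notin\mathscr{W}_{0,L}]$ uniformly on $\mathcal{K}$, I would apply Lem.~\ref{lem:compsync} to a compact connected extension of $\mathcal{K}$ inside $\M_\gamma\setminus\mathcal{D}_\gamma$ (reducing via Prop.~\ref{prop:minimality}.(ii) to the quotient $\stateset/\!\sim$ should $\mathcal{K}$ meet $\mathcal{D}_\gamma$). That lemma produces a set $\mathscr{W}_L$ of exponentially large probability on which the posterior on $\rvstate_{L+1}$ is $(1-b^L)$-concentrated on a specific state $\iota_{L+1}(\a_{1:L})\in\stateset$. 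The key step is to upgrade this approximate concentration to the strict support-collapse $|\mathcal{S}_L(\a_{1:L})|=1$, which trivially implies injectivity of $\gamma(\cdot,\a_L)$. For this I would use that every non-zero posterior entry $\Pr_{\rvstate_L|\rvalph_{1:L}}(q)[j'|\a_{1:L}]$ is a monomial in the entries of $q$ along a directed path in $\Gamma(q)$, hence uniformly bounded below by $c^L$ for some $c>0$ on the compact $\mathcal{K}$; iterating Lem.~\ref{lem:compsync} over disjoint blocks makes the base $b$ arbitrarily small relative to $c$, rendering the $b^L$-small leakage to non-dominant states incompatible with the $c^L$-positivity of any genuine support entry once $L$ is large. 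Thus $\mathscr{W}_L\subseteq\mathscr{W}_{0,L}$ eventually, and the desired exponential bound follows.

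\emph{Main obstacle.} The hardest part is the strict-collapse upgrade: securing uniform-in-$\mathcal{K}$ constants with $b<c$ requires a controlled iteration of the non-exact synchronization argument over long blocks and, in parallel, a connectedness-plus-compactness gluing analogous to the open-cover argument in the proof of Lem.~\ref{lem:compsync}, to ensure that $\iota_{L+1}(\a_{1:L})$ is a $q$-independent choice across all of $\mathcal{K}$. The initial transient before $|\mathcal{S}_L|$ has collapsed to a singleton only contributes a constant multiplicative factor $B_0$ to the final bound.
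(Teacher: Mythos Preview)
Your definition of $\mathscr{W}_{0,L}$ via injectivity of $\gamma(\cdot,\a_L)$ on the posterior support $\mathcal{S}_L(\a_{1:L})$ is fine, and your verification of the displayed identity on that set is correct and is essentially the paper's argument. The gap is in the tail bound: your ``strict-collapse upgrade'' to $|\mathcal{S}_L|=1$ is in general \emph{impossible}, so the whole strategy of invoking Lem.~\ref{lem:compsync} and then comparing the leakage rate $b$ against a positivity rate $c$ cannot succeed.

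Concretely, take $\stateset=\{1,2\}$, $\alphset=\{\mathsf{0},\mathsf{1}\}$ and the simple DFA-type with $\gamma(1,\mathsf{0})=2,\ \gamma(1,\mathsf{1})=1,\ \gamma(2,\mathsf{0})=1,\ \gamma(2,\mathsf{1})=2$. Both letters act as bijections of $\stateset$, so $\mathcal{S}_L(\a_{1:L})=\stateset$ for every $\a_{1:L}$ and every $q\in\M_\gamma$; the minimal reachable support size is $r_\gamma=2$. Generic $q$ here lie in $\M_\gamma\setminus\mathcal{D}_\gamma$, so Lem.~\ref{lem:compsync} applies and does give $(1-b^L)$-concentration of the posterior measure on a single state---but the \emph{support} never shrinks. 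Your lower bound $c^L$ on nonzero posterior entries is correct, but in this example it forces $b\geq c$, and iterating over blocks rescales both $b^L$ and $c^L$ identically without improving their ratio. A secondary issue is that Lem.~\ref{lem:compsync} requires $\mathcal{K}\subset\M_\gamma\setminus\mathcal{D}_\gamma$, whereas the present lemma is stated for $\mathcal{K}\subset\M_\gamma$; your quotienting workaround is not uniform in $q$.

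The paper avoids all of this by never aiming for $|\mathcal{S}_L|=1$. It fixes a finite word $\mathsf{w}$ achieving the minimal support cardinality $r_\gamma$ and takes $\mathscr{W}_{0,L}$ to be the words containing $\mathsf{w}$ as a subword of $\a_{1:L-1}$. Once $\mathsf{w}$ has appeared, the support has size exactly $r_\gamma$ forever after; hence every subsequent letter, in particular $\a_L$, must act injectively on it (else the support would drop below $r_\gamma$). The tail bound is then just the probability of not seeing the fixed pattern $\mathsf{w}$, handled by strong connectivity of $\gamma$ and a uniform-in-$\mathcal{K}$ positive lower bound on $\Pr_{\rvalph_{1:L_j}|\rvstate_1}(q)[\mathsf{w}_j\mid j]$ obtained by compactness---no appeal to Lem.~\ref{lem:compsync} is needed, and the argument works on all of $\M_\gamma$.
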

	\begin{proof}
		Observe that, for any $\a_{1:L}\in\alphset_{1:L}(\gamma)$, the internal state subset $$\supp\big(\Pr_{\rvstate_{L+1}|\rvalph_{1:L}}(q)[\ \cdot\ | \a_{1:L}]\big)\subset \stateset$$
		does not depend on the point $q\in\M_\gamma$ but only on the DFA-type $\gamma$. Due to unifilarity, the positive integers
		$$r(\a_{1:L}):=\mathrm{card}\left(\supp\big(\Pr_{\rvstate_{L+1}|\rvalph_{1:L}}(q)[\ \cdot\ | \a_{1:L}]\big)\right),$$
		are non-increasing in the sense that $r(\a_{1:L+1})\leq r(\a_{1:L})$ for any $\a_{1:L+1}\in \alphset_{1:L+1}(\gamma)$. Denote
		$$r_\gamma:=\min\set{r(\a_{1:L})}{L\in\N,\ \a_{1:L}\in\alphset_{1:L}(\gamma)}.$$
		Let $\mathsf{w}\in\alphset_{1:L_1}(\gamma,k)$ be some word with $r(\mathsf{w})=r_\gamma$ and, for $L>L_1$, set
		$$\mathscr{W}_{0,L}:=\set{\a_{1:L}\in\alphset_{1:L}(\gamma)}{\mathsf{w} \textnormal{ is a subword of } \a_{1:L-1}}.$$
		Because $\gamma$ is strongly-connected, there is for any $j\in\stateset$ a word $\mathsf{w}_j\in\alphset_{1:L_j}(\gamma,j)$ having $\mathsf{w}$ as a subword. We set
		$$p:=\inf_{q\in\mathcal{K}}\min_{j\in\stateset}\Pr_{\rvalph_{1:L_j} |\rvstate_1}(q)[\mathsf{w}_j|j].$$
		This number is positive because $\Pr_{\rvalph_{1:L_j} |\rvstate_1}(\cdot)[\mathsf{w}_j|j]$ are positive continuous functions on the compact set $\mathcal{K}$. Like in the proof of Thm. 1 in \cite{travers2011exact}, it can be shown that for any $q\in\mathcal{K}$: $\Pr(q)[ \rvalph_{1:L}\notin \mathscr{W}_{0,L} ] \leq B_0 b_0^L$ for $B_0=\frac{1}{1-p}$, $b_0=(1-p)^{1/\max_{j} L_j}<1$.
		
		Furthermore, let $(i_{1:L},\a_{1:L})\in\edgeset_{1:L}(\gamma)$ such that $\a_{1:L}\in \mathscr{W}_{0,L}$. Then we have
		$$\forall j\in \supp\big(\Pr_{\rvstate_{L}|\rvalph_{1:L-1}}(q)[\ \cdot\ | \a_{1:L-1}]\big):\ \gamma(j,\a_L)=\gamma(i_L,\a_L)\ \Leftrightarrow \ j=i_L,$$
		because otherwise $r(\a_{1:L})<r(\a_{1:L-1})$. This implies the remaining statement from the lemma.
	\end{proof}
	
	We will now finish the proof of Thm. \ref{thm:relentrate} by computing the limit (\ref{eq:synchronizationbound2}). Take $b_0, B_0, \mathscr{W}_{0,L}$ to be the items from Lem. \ref{lem:exactsync} with respect to the previously chosen $\mathcal{K}$. Set $\mathscr{W}'_{L}:=\mathscr{W}_L\cap\mathscr{W}_{0,L}$ and immediately observe that, for $L\geq L_0$:
	\begin{equation}\label{eq:smallmeasureset}
	\Pr(q')[ \rvalph_{1:L}\notin \mathscr{W}'_{L} ]\leq B b^L + B_0 b_0^L\leq B' b'^L,
	\end{equation}
	for the constants $B':= B+B_0>0$ and $0\leq b':=\max\{b,b_0\} <1$. We set
	$$\mathrm{Cr}(q'\parallel q)(\mathrm{X}|y):=-\sum_{x\in\mathscr{X}} \Pr_{\mathrm{X}|\mathrm{Y}}(q')[x | y]\log\Pr_{\mathrm{X}|\mathrm{Y}}(q)[x | y]$$
	and note that Lem. \ref{lem:exactsync} implies, for any $\a_{1:L+1}\in\mathscr{W}'_{L+1}$:
	\begin{equation}\label{eq:crossentsync}
	\mathrm{Cr}(q'\parallel q)(\rvstate_{L+1}|\a_{1:L+1})=\mathrm{Cr}(q'\parallel q)(\rvstate_{L+2}|\a_{1:L+1}).
	\end{equation}
	We will now perform the final chain of estimates to prove (\ref{eq:synchronizationbound2}) and again omit the subscripts in $\Pr_{\mathrm{X|\mathrm{Y}}}$:
	\begin{align*}
	&\lim_{L\to\infty} \Cr(q'\parallel q)(\rvstate_{L+1} | \rvalph_{1:L+1}) =\\
	&= -\lim_{L\to\infty} \sum_{\a_{1:L+1}\in\alphset_{1:L+1}(\gamma)} \Pr(q')[\a_{1:L+1}]\ \Cr(q'\parallel q)(\rvstate_{L+1} | \a_{1:L+1})\\
	&\overset{(\ref{eq:smallmeasureset})}{\leq} -\lim_{L\to\infty}\sum_{\a_{1:L+1}\in \mathscr{W}'_{L+1}} \Pr(q')[\a_{1:L+1}]\  \Cr(q'\parallel q)(\rvstate_{L+1} | \a_{1:L+1})\\
	&\hspace{2cm} +\underbrace{\lim_{L\to\infty} B' b'^{L+1} (L+1) M}_{=0}\\
	&\overset{(\ref{eq:crossentsync})}{=} -\lim_{L\to\infty}\sum_{\a_{1:L+1}\in\mathscr{W}'_{L+1}} \Pr(q')[\a_{1:L+1}]\  \Cr(q'\parallel q)(\rvstate_{L+2} | \a_{1:L+1})\\
	&\leq -\lim_{L\to\infty}\sum_{\a_{1:L+1}\in\mathscr{W}_{L+1}} \Pr(q')[\a_{1:L+1}]\  \Cr(q'\parallel q)(\rvstate_{L+2} | \a_{1:L+1})\\
	&\leq \lim_{L\to\infty} M_0 b^{L+1} + b^{L+1} (L+1) M\\
	&=0.
	\end{align*}
	where the last estimate is achieved in the same way as in the end of the computation of (\ref{eq:synchronizationbound1}).
	\qed
	
	\newpage
	
	\section{Proof of Prop. \ref*{prop:hessianmetricjet}}\label{subsec:proofhessianmetricjet}
	Since $\mathrm{h}_\gamma(\cdot\parallel q)$ is smooth and non-negative on a neighbourhood of $q$ with isolated minimum at $q$, the metric tensor $\mathrm{g}(q)$ is positive-definite and hence a Riemannian metric.
	
	\textbf{(i)} It has to be shown that for an arbitrary smooth curve $c: [-\varepsilon,\varepsilon]\to \M_\gamma$ with $c(0)=q$, we have
	\begin{equation}\label{eq:jets}
		\frac{\d^m}{\d t^m}\Big|_{t=0} \mathrm{h}_\gamma(c(t)\parallel q)= \frac{\d^m}{\d t^m}\Big|_{t=0} \mathrm{E}(c(t)\parallel q)\ ,\quad \textnormal{for } m=0,1,2.
	\end{equation}
	To this end, observe first that the left-hand side is zero for $m=0,1$, and for $m=2$ it equals $\mathrm{g}(q)\cdot\dot{c}(0)^{\otimes 2}$ by definition of $\mathrm{g}$. For the right-hand side, we consider the exponential map $\mathrm{Exp}_q: T_q \M_\gamma \to \M_\gamma$. In a sufficiently small neighbourhood of $q$, the infimum in the definition of $\mathrm{E}(\cdot\parallel q)$ is achieved precisely at the geodesics through $q$ i.e. the curves $t\mapsto \mathrm{Exp}_q(tv)$ for $v\in T_q \M_\gamma$ and $t\in\mathbb{R}$ such that $||tv||_\mathrm{g}$ is sufficiently small. On the other hand, as the exponential map is a diffeomorphism of a neighbourhood of $0$ onto its image, there are smooth functions $r: [-\varepsilon,\varepsilon]\to \R$, with $r(0)=0$, $\dot{r}(0)=1$, and $X: [-\varepsilon,\varepsilon]\to T_q\M_\gamma$, with $X(0)=\dot{c}(0)$, such that $c(t)=\mathrm{Exp}_q(r(t) X(t))$. We can now write
	\begin{align*}
		\mathrm{E}(c(t)\parallel q)&= \frac12 \int_{s=0}^{1} \mathrm{g}(\mathrm{Exp}_q(s r(t) X(t)))\cdot\left(T\mathrm{Exp}_{q}(s r(t) X(t)) (r(t) X(t))\right)^{\otimes 2} \d s\\
		&= \frac{r(t)^2}2 \int_{s=0}^{1} \mathrm{g}(\mathrm{Exp}_q(s r(t) X(t)))\cdot\left(T\mathrm{Exp}_{q}(s r(t) X(t))\ X(t)\right)^{\otimes 2} \d s,
	\end{align*}
	where $T\mathrm{Exp}_{q}(v):\ T_v T_q\M_\gamma \to T_{\mathrm{Exp}_q(v)}\M_\gamma$ is the tangent map. Since $r(0)=0$ we can conclude $\frac{\d^m}{\d t^m}\Big|_{t=0} \mathrm{E}(c(t)\parallel q)=0$ for $m=0,1$ and since $\dot{r}(0)=1$:
	\begin{align*}
		\frac{\d^2}{\d t^2}\Big|_{t=0} \mathrm{E}(c(t)\parallel q)&= \int_{s=0}^{1} \mathrm{g}(\mathrm{Exp}_q(0))\cdot\left(T\mathrm{Exp}_{q}(0)\ X(0)\right)^{\otimes 2} \d s\\
		&=  \mathrm{g}(q)\cdot\left(X(0)\right)^{\otimes 2} \int_{s=0}^{1} \d s\\
		&= \mathrm{g}(q)\cdot\dot{c}(0)^{\otimes 2}.
	\end{align*}
	
	\textbf{(ii)} Item (i) together with Taylor's theorem implies that for $|t_l-t_{l-1}|\leq \delta$ small enough, there is $M>0$ such that $\big|\mathrm{h}_\gamma(c(t_l) \parallel c(t_{l-1})) \ -\ \mathrm{E}(c(t_l)\parallel c(t_{l-1}))\big|\leq M (t_l-t_{l-1})^3$ and therefore
	\begin{align*}
		&\lim_{\delta\to 0}\sup_{(t_l)\in \mathcal{Z}_\delta} \left|\ \sum_{l=1}^n \mathrm{h}_\gamma(c(t_l) \parallel c(t_{l-1})) \ -\ \mathrm{E}[c]\ \right|\leq \\
		&\leq \lim_{\delta\to 0}\sup_{(t_l)\in \mathcal{Z}_\delta} \sum_{l=1}^n \left|\  \mathrm{h}_\gamma(c(t_l) \parallel c(t_{l-1})) \ -\ \mathrm{E}(c(t_l)\parallel c(t_{l-1}))\ \right| + \left|\ \mathrm{E}(c(t_l)\parallel c(t_{l-1})) \ -\ \mathrm{E}[c|_{[t_{l-1},t_l]}]\ \right|\\	
		&\leq \lim_{\delta\to 0}\sup_{(t_l)\in \mathcal{Z}_\delta} \sum_{l=1}^n M (t_l-t_{l-1})^3 + \sum_{l=1}^n \left|\ \mathrm{E}(c(t_l)\parallel c(t_{l-1})) \ -\ \mathrm{E}[c|_{[t_{l-1},t_l]}]\ \right|\\
		&= \lim_{\delta\to 0}\sup_{(t_l)\in \mathcal{Z}_\delta} \sum_{l=1}^n \left|\ \mathrm{E}(c(t_l)\parallel c(t_{l-1})) \ -\ \mathrm{E}[c|_{[t_{l-1},t_l]}]\ \right|.
	\end{align*}
	Denoting by $c_{l-1}: [t_{l-1},t_l]\to\M_\gamma$ the unique constant-speed geodesic running from $c(t_{l-1})$ to $c(t_l)$:
	\begin{align*}
		&\lim_{\delta\to 0}\sup_{(t_l)\in \mathcal{Z}_\delta} \left|\ \sum_{l=1}^n \mathrm{h}_\gamma(c(t_l) \parallel c(t_{l-1})) \ -\ \mathrm{E}[c]\ \right|\\
		&\leq \lim_{\delta\to 0}\sup_{(t_l)\in \mathcal{Z}_\delta} \sum_{l=1}^n \frac12 \int_{s=t_{l-1}}^{t_l} \left|\ \mathrm{g}(c_{l-1}(s))\cdot\dot{c}_{l-1}(s)^{\otimes 2} - \mathrm{g}(c(s))\cdot\dot{c}(s)^{\otimes 2} \ \right| \d s\\
		&\leq \lim_{\delta\to 0}\sup_{(t_l)\in \mathcal{Z}_\delta} \sup_{\stack{1\leq l\leq n}{s\in [t_{l-1},t_l]}}\frac12 \left|\ \mathrm{g}(c_{l-1}(s))\cdot\dot{c}_{l-1}(s)^{\otimes 2} - \mathrm{g}(c(s))\cdot\dot{c}(s)^{\otimes 2} \ \right|.
	\end{align*}
	The expression in absolute values depends continuously on $s\in [t_{l-1}, t_l]$. Therefore, all we need to show is that it vanishes in the limit $s=t_l\to t_{l-1}$. We employ the exponential map around $c(t_{l-1})$ and write $c(s)=\mathrm{Exp}_{c(t_{l-1})}(r(s)X(s))$ with smooth functions $r,X$ defined on a neighbourhood of $t_{l-1}$ satisfying $r(t_{l-1})=0,\ \dot{r}(t_{l-1})=1$ and $X(t_{l-1})=\dot{c}(t_{l-1})$. We can write
	$$c_{l-1}(s)=\mathrm{Exp}_{c(t_{l-1})}\left(\frac{s-t_{l-1}}{t_l-t_{l-1}}r(t_l)X(t_l)\right)$$
	and conclude the proof with
	\begin{align*}
		\lim_{t_l\to t_{l-1}} \mathrm{g}(c_{l-1}(t_{l}))\cdot\dot{c}_{l-1}(t_{l})^{\otimes 2} 
		&= \mathrm{g}(c(t_{l-1}))\cdot \left(\lim_{t_l\to t_{l-1}} \frac{r(t_l)}{t_l-t_{l-1}}X(t_l) \right)^{\otimes 2}\\
		&= \mathrm{g}(c(t_{l-1}))\cdot \left(\dot{r}(t_{l-1})X(t_{l-1}) \right)^{\otimes 2}\\
		&= \mathrm{g}(c(t_{l-1}))\cdot \dot{c}(t_{l-1})^{\otimes 2}\\
	\end{align*}
	\qed
	
	\newpage
	
	\section{Proof of Thm. \ref*{thm:largeconsistentdev}}\label{subsec:prooflargedev}
	
	According to the arguments in the proof outline, we have for any $x\in\hat{\pi}(\edgeset_{1:L}(\gamma))$:
	\begin{equation*}
		\hat{\pi}_*\Pr_{\rvedge_{1:L}}(q)[x]= \left(\sum_{e_{1:L}\in\hat{\pi}^{-1}(x)}\pi_{\rvstate(e_1)}(q)\right) \prod_{(j,\a)\in\edgeset(\gamma)} \e^{Lx^{j,\a}\log q^{j,\a}}.
	\end{equation*}
	We would like to estimate the sum in brackets using graph combinatorics of the empirical type $x$. Observe that every Euler path of $x$ yields a string $e_{1:L}\in\edgeset_{1:L}(\gamma)$ but this assignment is not injective. Indeed, Euler paths which are obtained from one another by exchanging parallel edges labelled with the same letter lead to the same string $e_{1:L}$. Taking this multiplicity into account, we have
	$$\mathrm{card}(\hat{\pi}^{-1}(x))=\frac{\mathrm{ET}(x)}{\prod_{j,\a} (Lx^{j,\a})!},$$
	where $\mathrm{ET}(x)$ denotes the number of Euler paths in the empirical type $x$. The fact that $x$ admits an Euler path implies that all except for two vertices of $x$ have coinciding in- and out-degrees and the remaining two vertices either also have coinciding in- and out-degrees or one of them has in-degree by 1 lower than out-degree and the other one has out-degree by 1 lower than in-degree. In the second case, the initial vertex $\mathrm{init}(x)$ and the terminal vertex $\mathrm{term}(x)$ of any Euler path are uniquely determined by $x$ and we have
	$$\sum_{e_{1:L}\in\hat{\pi}^{-1}(x)}\pi_{\rvstate(e_1)}(q)=\frac{\mathrm{ET}(x,\mathrm{term}(x))}{\prod_{j,\a} (Lx^{j,\a})!} \pi_{\mathrm{init}(x)}(q),$$
	where $\mathrm{ET}(x,\mathrm{term}(x))=\mathrm{ET}(x)$ denotes the number of Euler paths in $x$ with terminal vertex $\mathrm{term}(x)$. In the other case, every Euler path is a circuit. In this case we set the values $\mathrm{init}(x)=\mathrm{term}(x)$ to some arbitrary internal state and obtain through cyclical permutation of Euler-circuits:
	$$\sum_{e_{1:L}\in\hat{\pi}^{-1}(x)}\pi_{\rvstate(e_1)}(q)=\frac{\mathrm{ET}(x,\mathrm{term}(x))}{\prod_{j,\a} (Lx^{j,\a})!}.$$
	We set
	\begin{equation}\label{eq:weightdef}
		w(x):=\begin{cases}
		1 &, \textnormal{if the Euler paths of $x$ are circuits,}\\
		\pi_{\mathrm{init}(x)}(q) &, \textnormal{otherwise},
		\end{cases}	
	\end{equation}
	and show
	\begin{lem}\label{lem:preciselargedeviation}
		Let $\hat{G}\subset \R_+^{\edgeset(\gamma)}$ be any compact subset. There is $L_0\in\N$ such that for any $L\geq L_0$ and any $x\in \hat{\pi}(\edgeset_{1:L}(\gamma))\cap\hat{G}$, we can write
		$$\hat{\pi}_*\Pr_{\rvedge_{1:L}}(q)[x]= \frac1L w(x)\hat{\kappa}_0(x)\hat{\kappa}_{1,L}(x)\e^{-L\mathrm{h}_\gamma(\psi(x)\parallel q)+\sum_{j\in\stateset}r_{L}^j(x)\sum_{\a \in\alphset(\gamma,j)} \psi(x)^{j,\a}\log\frac{\psi(x)^{j,\a}}{q^{j,\a}}},$$
		for a continuous function $\hat{\kappa}_0: \R_+^{\edgeset(\gamma)}\to\R_+$ and functions $\hat{\kappa}_{1,L}(x)$ and
		$$r_{L}^j(x)=L(x^j-\pi_j(\psi(x)))\quad ,\ x^j:=\sum_{\a \in\alphset(\gamma,j)} x^{j,\a},$$ on $\hat{\pi}(\edgeset_{1:L}(\gamma))\cap\hat{G}$ which are all uniformly bounded wrt. $L$ and such that $$\lim_{L\to\infty}\sup_{x\in \hat{\pi}(\edgeset_{1:L}(\gamma))\cap \hat{G}} |\hat{\kappa}_{1,L}(x)-1|= 0.$$
	\end{lem}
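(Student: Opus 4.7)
Proof proposal. The argument builds on the exact expression derived immediately before the statement, namely
\begin{equation*}
\hat\pi_*\Pr_{\rvedge_{1:L}}(q)[x]=\frac{w(x)\,\mathrm{ET}(x,\mathrm{term}(x))}{\prod_{(j,\a)\in\edgeset(\gamma)}(Lx^{j,\a})!}\prod_{(j,\a)\in\edgeset(\gamma)}(q^{j,\a})^{Lx^{j,\a}}.
\end{equation*}
The plan is to evaluate $\mathrm{ET}(x,\mathrm{term}(x))$ via the BEST theorem as $\tau(x)\prod_{j}(Lx^j-1)!$, where $\tau(x)$ is the number of arborescences of the empirical multigraph rooted at $\mathrm{term}(x)$ (or of the phantom-augmented graph in the non-circuit case). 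Kirchhoff's matrix-tree theorem identifies $\tau(x)$ as a principal cofactor of the combinatorial Laplacian. Since every entry of this Laplacian is linear in $L$, factoring out a common $L$ from each of its $|\stateset|-1$ retained rows gives $\tau(x)=L^{|\stateset|-1}\tilde\tau(x)$, where $\tilde\tau$ is a fixed polynomial in the edge frequencies alone; strong connectedness of $\gamma$ ensures $\tilde\tau>0$ on $\M_\gamma$, and hence $\tilde\tau$ is continuous and bounded away from $0$ on the compact set $\hat G$.

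Next, I would apply Stirling's formula $n!=\sqrt{2\pi n}(n/e)^{n}(1+\mathcal{O}(1/n))$ to each factorial. Compactness of $\hat G\subset\R_+^{\edgeset(\gamma)}$ yields a uniform positive lower bound on every coordinate $x^{j,\a}$, so $Lx^{j,\a}\to\infty$ uniformly and the cumulative Stirling relative error defines a uniform $1+\mathcal{O}(1/L)$ quantity which becomes $\hat\kappa_{1,L}(x)$. The Stirling-generated $e$-powers cancel by $\sum_{\a\in\alphset(\gamma,j)}Lx^{j,\a}=Lx^j$, and similarly the pure $L$-powers in the bases cancel, leaving the algebraic factor $\prod_{(j,\a)}\bigl(\psi(x)^{j,\a}\bigr)^{-Lx^{j,\a}}$. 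The $\sqrt{2\pi\cdot}$ contributions, the factor $1/(Lx^j)$ arising from $(Lx^j-1)!=(Lx^j)!/Lx^j$, and the combinatorial prefactor $L^{|\stateset|-1}\tilde\tau(x)$ together assemble into the asserted $\tfrac{1}{L}w(x)\hat\kappa_0(x)$, with $\hat\kappa_0$ continuous and strictly positive on $\R_+^{\edgeset(\gamma)}$.

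Combining the algebraic factor with the emission product $\prod(q^{j,\a})^{Lx^{j,\a}}$ yields
\begin{equation*}
\exp\!\biggl(-\sum_{j\in\stateset}Lx^j\sum_{\a\in\alphset(\gamma,j)}\psi(x)^{j,\a}\log\tfrac{\psi(x)^{j,\a}}{q^{j,\a}}\biggr).
\end{equation*}
Inserting the decomposition $Lx^j=L\pi_j(\psi(x))+r_L^j(x)$ and recognising the first piece via Thm.~\ref{thm:relentrate} as $-L\mathrm{h}_\gamma(\psi(x)\parallel q)$ isolates the correction term $\sum_j r_L^j(x)\sum_\a\psi(x)^{j,\a}\log(\psi(x)^{j,\a}/q^{j,\a})$ of the statement.

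The main obstacle is the uniform boundedness of $r_L^j(x)$ on $\hat\pi(\edgeset_{1:L}(\gamma))\cap\hat G$: for a generic $x\in\hat G$ this would be $\mathcal{O}(L)$, so the argument must exploit the combinatorial constraint that $x$ comes from a genuine Euler path of length $L$. This constraint gives $Lx^j-\sum_{(k,\a):\gamma(k,\a)=j}Lx^{k,\a}\in\{-1,0,+1\}$ for every $j$, the non-zero values occurring only at $\mathrm{init}(x)$ and $\mathrm{term}(x)$ in the non-circuit case. Dividing by $L$ says precisely that $x$ satisfies the fixed-point equation $x=x\cdot\ubar{\psi(x)}$ of the stochastic operator associated to $\psi(x)$ up to an $\mathcal{O}(1/L)$ defect. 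Continuity of the stationary-state map $q\mapsto\pi(q)$ (given by the explicit formula of Lem.~\ref{lem:stationarystates}) on the compact set $\psi(\hat G)\subset \bar\M_\gamma\cap\M_\stateset$, combined with the uniform invertibility of $\One-\ubar{q}$ modulo its stationary kernel on this set, then produces $|x^j-\pi_j(\psi(x))|\leq C/L$ uniformly and hence $|r_L^j|\leq C$. Once this bound is in hand, the continuity of $\hat\kappa_0$ and the uniform convergence $\hat\kappa_{1,L}\to 1$ follow directly from the preceding Stirling computation on the compact $\hat G$.
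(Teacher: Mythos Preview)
Your proposal is correct and follows the paper's proof almost exactly for the BEST/Kirchhoff/Stirling computation that produces the factors $\hat\kappa_0$, $\hat\kappa_{1,L}$, and the exponential; the only substantive difference lies in how you bound $r_L^j(x)$.

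The paper does not argue via invertibility of $\One-\ubar{\psi(x)}$ on the hyperplane $\{w:w\cdot\one=0\}$. Instead it observes that any empirical type $x=\hat\pi(e_{1:L})$ can be closed into a circuit type $x_0=\hat\pi(e_{1:L+m})$ by appending a cycle-free return path of $m<|\stateset|$ edges, which forces $x_0\in\hat\M_\gamma$ and hence (via the auxiliary Lem.~\ref{lem:emptypemap}) $x_0^j=\pi_j(\psi(x_0))$ exactly. Then $|x^j-x_0^j|\leq C_1/L$ trivially, and $|\pi_j(\psi(x_0))-\pi_j(\psi(x))|\leq C_2/L$ by Lipschitz continuity of $\varphi\circ\psi$ on the compact $\hat G$, whence $|r_L^j|\leq C_1+C_2$. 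Your linear-algebraic route (near-stationarity plus uniform invertibility on the zero-sum subspace over the compact $\psi(\hat G)$) is equally valid and arguably more direct; the paper's detour through Lem.~\ref{lem:emptypemap} has the side benefit that $\hat\M_\gamma$ and the section $\varphi$ are reused later in the proof of the central limit theorem, so the combinatorial description is not wasted effort there.
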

	\begin{proof}
		After the previous comments, we know that
		\begin{equation}\label{eq:empiricalhatdistribution}
			\hat{\pi}_*\Pr_{\rvedge_{1:L}}(q)[x]= w(x) \frac{\mathrm{ET}(x,\mathrm{term}(x))}{\prod_{j,\a} (Lx^{j,\a})!}\  \e^{L \sum_{j,\a} x^{j,\a}\log q^{j,\a}}.
		\end{equation}
		The number $\mathrm{ET}(x,\mathrm{term}(x))$ can be 
		computed by a standard graph-combinatorial argument, commonly referred to as the BEST-theorem:
		$$\mathrm{ET}(x,\mathrm{term}(x))= \mathrm{Arb}(x,\mathrm{term}(x))\prod_{j\in\stateset}(\mathrm{outdeg}_{x}(j)-1)!,$$
		where $\mathrm{Arb}(x,\mathrm{term}(x))$ is the number of arborescences with root $\mathrm{term}(x)$. It can be computed as a principal minor  by Kirchhoff's matrix-tree theorem ($[\cdot]_{jj}$ denotes the $j$-th principal minor):
		\begin{align*}
		\mathrm{Arb}(x,\mathrm{term}(x))
		&=\left[\sum_{j\in\stateset} \mathrm{indeg}_{x}(j)\cdot \ket{j}\bra{j}- \sum_{j,k\in\stateset}  \mathrm{mult}_{x}(j\to k)\cdot \ket{j}\bra{k} \right]_{\mathrm{term}(x) \mathrm{term}(x)}\\
		&= \left[\sum_{j\in\stateset} \sum_{(i,\a): \gamma(i,\a)=j} L x^{i,\a}\ket{j}\bra{j}- \sum_{jk\in\stateset_{1:2}} \sum_{\a: \gamma(j,\a)=k}L x^{j,\a} \ket{j}\bra{k} \right]_{\mathrm{term}(x) \mathrm{term}(x)}\\
		&= L^{|\stateset|-1}\left[\sum_{j\in\stateset} \sum_{(i,\a): \gamma(i,\a)=j}  x^{i,\a}\ket{j}\bra{j}- \sum_{jk\in\stateset_{1:2}} \sum_{\a: \gamma(j,\a)=k} x^{j,\a} \ket{j}\bra{k} \right]_{\mathrm{term}(x) \mathrm{term}(x)}.
		\end{align*}
		Furthermore we use $Lx^j=\mathrm{outdeg}_x(j)$ and Stirling's approximation for the factorial, namely $(Lx^j)!=\sqrt{2\pi} \e^{-Lx^j}\sqrt{Lx^j}(Lx^j)^{Lx^j} (1+\mathcal{O}((L x^j)^{-1}))$ with $0\leq \mathcal{O}((L x^j)^{-1})\leq \frac{\e}{\sqrt{2\pi}}-1$, to compute:
		\begin{align*}
		\frac{\mathrm{ET}(x,\mathrm{term}(x))}{\prod_{j,\a} (Lx^{j,\a})!} &= \frac{\mathrm{Arb}(x,\mathrm{term}(x))}{L^{|\stateset|}\prod_{j} x^j}\frac{\prod_{j} (Lx^j)!}{\prod_{j,\a} (Lx^{j,\a})!}\\
		&=\frac{\mathrm{Arb}(x,\mathrm{term}(x))}{L^{|\stateset|}\prod_{j} x^j} \cdot\frac{\prod_j \mathcal{O}((L x^j)^{-1})}{\prod_{j,\a} \mathcal{O}((L x^{j,\a})^{-1})}\cdot \frac{\prod_{j} \sqrt{ x^j}(Lx^j)^{Lx^j}}{\prod_{j,\a} \sqrt{ x^{j,\a}} (Lx^{j,\a})^{Lx^{j,\a}}}\\
		&=\frac{\mathrm{Arb}(x,\mathrm{term}(x))}{L^{|\stateset|}\sqrt{\prod_{j,\a} x^j x^{j,\a}}}  \cdot \frac{\prod_j \mathcal{O}((L x^j)^{-1})}{\prod_{j,\a} \mathcal{O}((L x^{j,\a})^{-1})}\cdot
		\prod_{j,\a} \left(\frac{x^{j,\a}}{x^j}\right)^{-Lx^{j,\a}}\\
		&=\underbrace{\frac{\mathrm{Arb}(x,\mathrm{term}(x))}{L^{|\stateset|}\sqrt{\prod_{j,\a} x^j x^{j,\a}}}}_{=:\frac1L \hat{\kappa}_0(x)}  \cdot \underbrace{\frac{\prod_j \mathcal{O}((L x^j)^{-1})}{\prod_{j,\a} \mathcal{O}((L x^{j,\a})^{-1})}}_{=:\hat{\kappa}_{1,L}(x)}\ \e^{-L\sum_{j,\a} x^j \psi(x)^{j,\a}\log \psi(x)^{j,\a}}.
		\end{align*}
		The functions $\hat{\kappa}_0$ and $\hat{\kappa}_{1,L}$ satisfy the requirements from the lemma due to the previous expression of $\mathrm{Arb}(x,\mathrm{term}(x))$ and the error bounds resp. error asymptotics of Stirling's approximation.
		Equation (\ref*{eq:empiricalhatdistribution}) now turns into
		\begin{align*}
			\hat{\pi}_*\Pr_{\rvedge_{1:L}}(q)[x] &= \frac1L w(x)\hat{\kappa}_0(x)\hat{\kappa}_{1,L}(x)\ \e^{-L\sum_{j,\a} x^j \psi(x)^{j,\a}\log \frac{\psi(x)^{j,\a}}{q^{j,\a}}}\\
			&= \frac1L w(x)\hat{\kappa}_0(x)\hat{\kappa}_{1,L}(x)\ \e^{-L\mathrm{h}_{\gamma}(\psi(x)\parallel q)+\sum_{j,\a} L(\pi^j(\psi(x))-x^j) \psi(x)^{j,\a}\log \frac{\psi(x)^{j,\a}}{q^{j,\a}}}
		\end{align*}
		To get the expression from the lemma, we set $r^j_L(x):=L(\pi^j(\psi(x))-x^j)$. It remains to be shown that these values are uniformly bounded for $x\in\hat{\pi}(\edgeset_{1:L}(\gamma))\cap\hat{G}$ and $L\in\N$. For this we first show the following useful
		\begin{lem}\label{lem:emptypemap}
			$\psi$ has a right-inverse $\varphi$, i.e. $\psi\circ\varphi=\mathrm{id}_{\M_\gamma}$, given by
			$$\varphi(q):= (\pi_j(q) q^{j,\a}),$$
			whose image is the semi-affine subspace
			$$\hat{\M}_\gamma:=\Set{x\in\R_+^{\edgeset(\gamma)}}{\sum_{e\in\edgeset(\gamma)} x^e=1;\ \forall j\in\stateset:\ \eta_j\cdot x= 0}\quad \textnormal{with}\quad \eta_j=\sum_{e:\ \rvstate(e)=j} \bra{e} -\sum_{e:\ \gamma(e)=j} \bra{e}.$$
		\end{lem}
		\begin{proof}
			The statement about the right-inverse is obvious. To compute the image of $\varphi$, recall that $\pi(q)$ is a stationary mixed state of $\ubar{q}$, implying that $\varphi(q)$ lies in the standard simplex in $\R^{\edgeset(\gamma)}$ and that we have:
			$$\sum_{j\in\stateset}\sum_{\gamma(e)=j}\pi_{\rvstate(e)}(q)q^e\ \bra{j}= \pi(q)\cdot\ubar{q}= \pi(q)\cdot\One =\pi(q)\cdot\sum_{j\in\stateset}\left(\sum_{e:\ \rvstate(e)=j} q^e\right)\ket{j}\bra{j}=\sum_{j \in\stateset}\sum_{\a\in\alphset(\gamma,j)} \pi_j(q) q^{j,\a}\ \bra{j}.$$
			Equating the coefficients of $\bra{j}$ yields exactly the remaining equations used to define $\hat{\M}_\gamma$.
		\end{proof}
		Continuing in the proof of Lem. \ref*{lem:preciselargedeviation}, we now observe that for any empirical type $x=\hat{\pi}(e_{1:L})\in\R_+^{\edgeset(\gamma)}$ we can artificially produce another empirical type $x_0=\hat{\pi}(e_{1:L+m})\in\R_+^{\edgeset(\gamma)}$ by adding a circuit-free path of $m<|\stateset|$ edges such that $\gamma(e_{1:L+m})=\rvstate(e_1)$. This means that $x_0\in\hat{\M}_\gamma$ and therefore, according to Lem. \ref*{lem:emptypemap}, $x_0^j=\varphi(\psi(x_0))^j$. Moreover we have $|x^j-x_0^j|\leq C_1\cdot 1/L$ for some constant $C_1\geq 0$ (independent of $x$). On the other hand  $\varphi\circ\psi$ is continuously differentiable on $\hat{G}$ and therefore we also have $|\varphi(\psi(x_0))^j-\varphi(\psi(x))^j|\leq C_2\cdot 1/L$ for some constant $C_2\geq 0$ only depending on $\hat{G}$ (and not on $x\in \hat{G}$ or $L\in\N$). Using this we estimate for $x\in\hat{G}$:
		\begin{align*}
		|r_L^j(x)| &= L |x^j-\pi_j(\psi(x))|\\
		&\leq L |x^j-x^j_0|+ L |x^j_0-\pi_j(\psi(x))|\\
		&= L |x^j-x^j_0|+ L |\varphi(\psi(x_0))^j-\varphi(\psi(x))^j|\\
		&\leq C_1+C_2.
		\end{align*}
		This completes the proof of Lem. \ref*{lem:preciselargedeviation}.
	\end{proof}
	
	We continue with the proof of the theorem. Since $\lim_{L\to\infty} q_L\in\M_\gamma$, there is a compact set $\hat{G}\subset\R_+^{\edgeset(\gamma)}$ and $L_0\in\N$ such that, for $L\geq L_0$, $\psi^{-1}(q_L)\subset\hat{G}$. We can thus apply the representation from Lem. \ref{lem:preciselargedeviation} and set
	$$\hat{\kappa}_L(x):=\frac1L w(x)\hat{\kappa}_0(x)\hat{\kappa}_{1,L}(x)\e^{\sum_{j\in\stateset}r_{L}^j(x)\sum_{\a \in\alphset(\gamma,j)} \psi(x)^{j,\a}\log\frac{\psi(x)^{j,\a}}{q^{j,\a}}}.$$ The lemma implies that there exists $C_3> 0$ such that, for $L\geq L_0$ and $x\in \hat{\pi}(\edgeset_{1:L}(\gamma))\cap\hat{G}$,
	$$\frac{1}{C_3}\frac{1}{L} \leq  \hat{\kappa}_L(x)\leq C_3\frac{1}{L}.$$
	We write for $L\geq L_0$:
	\begin{align*}
		\Pr_{\mathrm{Q}^L}(q)[q_L]&=\sum_{x\in\psi^{-1}(q_L)\cap\hat{\pi}(\edgeset_{1:L}(\gamma))} \hat{\pi}_*\Pr_{\rvedge_{1:L}}(q)[x]\\
		&= \underbrace{\sum_{x\in\psi^{-1}(q_L)\cap\hat{\pi}(\edgeset_{1:L}(\gamma))} \hat{\kappa}_L(x)}_{=:\kappa_L(q_L)}\ \e^{-L \mathrm{h}_\gamma(q_L\parallel q)}.
	\end{align*}
	The proof is concluded by estimating
	\begin{align*}
		0&=\lim_{L\to\infty}\frac1L \log \left(\frac{1}{C_3}\frac{1}{L}\right)\\
		&\leq\lim_{L\to\infty}\frac1L \log \inf_{x\in\psi^{-1}(q_L)\cap\hat{\pi}(\edgeset_{1:L}(\gamma))}\hat{\kappa}_L(x)\\
		&\leq\lim_{L\to\infty}\frac1L \log \kappa_L(q_L)\\ 
		&\leq \lim_{L\to\infty}\frac1L \log\left(\mathrm{card}(\hat{\pi}(\edgeset_{1:L}(\gamma)))\sup_{x\in\psi^{-1}(q_L)\cap\hat{\pi}(\edgeset_{1:L}(\gamma))} \hat{\kappa}_L(x)\right)\\
		&\leq \lim_{L\to\infty}\frac1L \log\left(L^{|\edgeset(\gamma)|}C_3\frac{1}{L}\right)\\
		&=0.
	\end{align*}
	\qed
	
	\newpage
	
	\section{Proof of Thm. \ref*{thm:centrallimitthm}}\label{subsec:proofcentrallimitthm}
	
	We fix $q\in\M_\gamma$ and define the measures
	$\mu_L, \mu$ as in the proof outline. In order to show weak convergence of $(\mu_L)$ to $\mu$ we would like to employ the Portmanteau-theorem, more precisely:	
	\begin{lem}\label{lem:wlog}
		For a probability measure $\rho$ and a sequence of measures $\rho_L$ on $T_q\M_\gamma$, the following are equivalent
		\begin{enumerate}\label{lem:portmanteau}
			\item $(\rho_L)$ converges weakly to $\rho$.
			\item For any continuity set of $\rho$ we have
			$\lim_{L\to\infty}\rho_L[S]=\rho[S]$.
			\item For any relatively-compact open set $U\subset T_q\M_\gamma$ we have
			$$\rho[U]\leq\liminf_{L\to\infty}\rho_L[U].$$
		\end{enumerate}		
	\end{lem}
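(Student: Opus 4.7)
The lemma is essentially the classical Portmanteau theorem on the finite-dimensional (hence $\sigma$-compact, Polish) space $T_q\M_\gamma$, with the noteworthy twist that condition (iii) restricts attention to \emph{relatively compact} open sets rather than all open sets. I plan to establish the cycle (i) $\Rightarrow$ (ii) $\Rightarrow$ (iii) $\Rightarrow$ (i), noting that in the intended application the $\rho_L$ are probability measures (arising as push-forwards of the empirical configuration ensembles) so that their total masses are automatically $1$.

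For (i) $\Rightarrow$ (ii) I would sandwich the indicator function of a $\rho$-continuity set $S$ between bounded continuous functions $f_\varepsilon^- \leq \chi_S \leq f_\varepsilon^+$ whose integrals against $\rho$ converge from both sides to $\rho[S]$ as $\varepsilon\to 0$, using $\rho[\partial S]=0$; weak convergence of $\rho_L$ then squeezes $\rho_L[S]$ to $\rho[S]$. For (ii) $\Rightarrow$ (iii) I would approximate a relatively compact open set $U$ from within by $\rho$-continuity sets via the standard device $U_\delta := \{x \in U : d(x, U^c) > \delta\}$. The boundaries $\partial U_\delta$ for distinct $\delta>0$ are pairwise disjoint, so at most countably many $\delta$ satisfy $\rho[\partial U_\delta] > 0$; along a sequence $\delta_n \downarrow 0$ avoiding the exceptional set each $U_{\delta_n}$ is a $\rho$-continuity set, and
$$\rho[U_{\delta_n}] \;=\; \lim_L \rho_L[U_{\delta_n}] \;\leq\; \liminf_L \rho_L[U],$$
while monotone convergence $\rho[U_{\delta_n}] \uparrow \rho[U]$ finishes (iii).

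The main work lies in (iii) $\Rightarrow$ (i), where the restriction to relatively compact open sets must be removed so that the classical Portmanteau theorem can be invoked. Here the key observation is that $T_q\M_\gamma$ is $\sigma$-compact and $\rho$ is a probability measure, hence $\rho$ is tight: for every $\varepsilon>0$ there is a compact $K_\varepsilon$ with $\rho[T_q\M_\gamma \setminus K_\varepsilon] < \varepsilon$, and $K_\varepsilon$ can be enlarged to a relatively compact open $V_\varepsilon \supset K_\varepsilon$. For an arbitrary open set $U$, the intersection $U\cap V_\varepsilon$ is open, relatively compact, and contained in $U$, so
$$\rho[U] \;\leq\; \rho[U\cap V_\varepsilon] + \varepsilon \;\leq\; \liminf_L \rho_L[U\cap V_\varepsilon] + \varepsilon \;\leq\; \liminf_L \rho_L[U] + \varepsilon.$$
Sending $\varepsilon\to 0$ yields the unrestricted lower-semicontinuity bound $\rho[U]\leq\liminf_L \rho_L[U]$ on \emph{all} open $U$, from which weak convergence (i) follows by the standard Portmanteau theorem. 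The main obstacle is precisely this tightness upgrade; the rest of the argument is classical, and the only subtlety to watch is that the $\rho_L$ have total mass converging to one, which is automatic under the probability-measure hypothesis.
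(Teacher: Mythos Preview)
Your proposal is correct and follows essentially the same approach as the paper. The paper simply cites the classical Portmanteau theorem for the equivalences (i)~$\Leftrightarrow$~(ii) and (i)~$\Leftrightarrow$~(iii-without-relative-compactness), and then concentrates on the one nontrivial step---upgrading (iii) from relatively compact open sets to all open sets---which it does by writing an arbitrary open $V$ as a countable ascending union of relatively compact open $V_l$ (using $\sigma$-compactness) and applying continuity of the probability measure $\rho$; your tightness argument with $U\cap V_\varepsilon$ is a minor variant of the same idea.
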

	\begin{proof}
		Without the condition of relative-compactness in (iii), these equivalences are provided by the Portmanteau-theorem. It remains to be shown that statement (iii) implies $\rho[V]\leq\liminf_{L\to\infty}\rho_L[V]$ for any open set $V\subset T_q\M_\gamma$. Indeed, since $T_q\M_\gamma$ is $\sigma$-compact, $V$ can be written as a countable ascending union of relatively-compact open sets $V_l,\ l\in\N$. Since $\rho$ is a probability measure, continuity of measure implies that for any $\varepsilon>0$ there is $l_0$ such that $\rho[V_{l_0}]\geq \rho[V]-\varepsilon$. If (iii) holds, we get the bound
		$$\rho[V]-\varepsilon\leq \rho[V_{l_0}]\leq \liminf_{L\to\infty} \rho_L[V_{l_0}]\leq \liminf_{L\to\infty} \rho_L[V]$$
		for any $\varepsilon>0$.
	\end{proof}
	We fix a relatively-compact open set $U\subset T_q\M_\gamma$.
	For convenience we reproduce the diagram of maps from the proof outline
	\begin{center}
		\begin{tikzcd}
			\edgeset_{1:L}(\gamma) \arrow[d,"\hat{\pi}"] \arrow[rd,"\mathrm{Q}^L"] & T_q\M_\gamma\\
			\bar{\Sigma}^{\edgeset(\gamma)} \arrow[r,two heads,swap,"\psi"] & \bar{\M}_\gamma\cap\M_\stateset \arrow[l,bend left,"\varphi"] \arrow[u,swap,"\mathrm{u}_{q,L}"]
		\end{tikzcd}
	\end{center}
	and introduce the shorthands $\psi_L:=\mathrm{u}_{q,L}\circ\psi$ and $\varphi_L:=\varphi\circ \mathrm{u}_{q,L}^{-1}$. We carry out the four steps of the proof as previewed in the outline.
	
	\paragraph{\textbf{Step 1:} } Since every $x\in\hat{\pi}(\edgeset_{1:L}(\gamma))$ comes from a path in $\gamma$ (as detailed in the proof of Thm. \ref{thm:largeconsistentdev}) it is clear that \begin{equation}\label{eq:minimalnbhd}
		x\in\bar{\Sigma}^{\edgeset(\gamma)}\quad \text{and}\quad |\eta_j\cdot x|\leq \frac1L, \ \text{for } j\in\stateset,
	\end{equation}
	where $\eta_j=\sum_{\rvstate(e)=j} \bra{e} - \sum_{\gamma(e)=j} \bra{e}.$
	In fact we observed earlier (Lem. \ref{lem:emptypemap}) that the semi-affine subspace
	$$\hat{\M}_\gamma=\Set{x\in\Sigma^{\edgeset(\gamma)}}{\forall j\in\stateset:\ \eta_j\cdot x= 0}$$ is in bijection with $\M_\gamma$ via the mutually inverse diffeomorphisms $\varphi$ and $\psi|_{\hat{\M}_\gamma}$.
	We can identify the tangent spaces to $\hat{\M}_\gamma$ with the linear subspace
	$$\hat{\M}_{\gamma,0}:=\Set{x\in\R^{\edgeset(\gamma)}}{\sum_{e\in\edgeset(\gamma)} x^e=0;\ \forall j\in\stateset:\ \eta_j\cdot x= 0}.$$
	\begin{lem}\label{lem:basis}
		There is a basis $B$ of the linear subspace $\hat{\M}_{\gamma,0}$ such that
		$$\left(\hat{\pi}(\edgeset_{1:L}(\gamma))+\sum_{b\in B} \mathbb{Z}\cdot\frac{1}{L}\ b \right)\cap \R^{\edgeset(\gamma)}_{+} \subset \hat{\pi}(\edgeset_{1:L}(\gamma))\cap \R^{\edgeset(\gamma)}_{+}$$
		and $\hat{\pi}(\edgeset_{1:L}(\gamma))\cap \R^{\edgeset(\gamma)}_{+}$ is not empty for $L$ large enough. Moreover, for any $x,x'\in \hat{\pi}(\edgeset_{1:L}(\gamma))\cap \R^{\edgeset(\gamma)}_{+}$ with $x'-x \in \sum_{b\in B} \mathbb{Z}\cdot\frac{1}{L}\ b$, one of the following two cases applies:
		\begin{enumerate}
			\item Every $e_{1:L}\in\edgeset_{1:L}(\gamma)$ with $\hat{\pi}(e_{1:L})=x$ is a cycle and then the same holds for every $e'_{1:L}\in\edgeset_{1:L}(\gamma)$ with $\hat{\pi}(e'_{1:L})=x'$.
			\item There is a unique internal state $\mathrm{init}(x)=\mathrm{init}(x')$ such that every $e_{1:L}\in\edgeset_{1:L}(\gamma)$ with $\hat{\pi}(e_{1:L})=x$ satisfies $\rvstate(e_1)=\mathrm{init}(x)$ and every $e'_{1:L}\in\edgeset_{1:L}(\gamma)$ with $\hat{\pi}(e'_{1:L})=x'$ satisfies $\rvstate(e'_1)=\mathrm{init}(x')$.
		\end{enumerate}
	\end{lem}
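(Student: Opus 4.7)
The strategy is to identify $\hat\M_{\gamma,0}$ with the (sum-zero hyperplane in the) directed cycle space of $\gamma$, and to build $B$ from integer-valued cycle-difference vectors. The key observation is that every $b\in B$ will satisfy $\partial b=0$, so translating an empirical type $x$ by $\tfrac1L b$ preserves both the total edge count and the in-/out-degree imbalance at every vertex; the positivity hypothesis $x'\in\R_{+}^{\edgeset(\gamma)}$ is the ingredient that restores weak connectivity of the edge-support after translation, permitting an Euler-trail reconstruction.

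Concretely, let $\partial:\R^{\edgeset(\gamma)}\to\R^{\stateset}$, $\bra{e}\mapsto\bra{\gamma(e)}-\bra{\rvstate(e)}$. Then $\ker\partial=\{x:\eta_j\cdot x=0\;\forall j\}$ is the classical directed cycle space of $\gamma$; by strong connectivity it has dimension $|\edgeset(\gamma)|-|\stateset|+1$ and is spanned by indicator vectors of directed cycles. Consequently $\hat\M_{\gamma,0}=\ker\partial\cap\{\sum_e x^e=0\}$ has dimension $d:=|\edgeset(\gamma)|-|\stateset|$. I would fix directed cycles $C_0,\ldots,C_d$ of $\gamma$ (of respective lengths $\ell_0,\ldots,\ell_d$) whose indicators $\chi_{C_0},\ldots,\chi_{C_d}$ form a basis of $\ker\partial$, and set
\[
B:=\{\,b_k:=\ell_0\chi_{C_k}-\ell_k\chi_{C_0}\ :\ 1\le k\le d\,\}.
\]
An elementary change-of-basis calculation shows $B$ is a basis of $\hat\M_{\gamma,0}$ (each $b_k$ sums to $\ell_0\ell_k-\ell_k\ell_0=0$ and lies in $\ker\partial$, and linear independence follows since the $\chi_{C_k}$ are). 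Integrality of $B$ guarantees that $L\bigl(x+\tfrac1L\sum_k n_k b_k\bigr)\in\Z^{\edgeset(\gamma)}$ whenever $Lx\in\Z^{\edgeset(\gamma)}$.

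For the inclusion, fix $x\in\hat\pi(\edgeset_{1:L}(\gamma))$ and suppose $x'=x+\tfrac1L\sum_k n_k b_k\in\R_{+}^{\edgeset(\gamma)}$. Then $Lx'\in\Z_{>0}^{\edgeset(\gamma)}$ has entries summing to $L$, and because every $b_k\in\ker\partial$ we have $\eta_j\cdot x'=\eta_j\cdot x$ for all $j$. So the multigraph on $\stateset$ with $Lx'^{e}$ copies of each edge $e$ has $L$ edges, exhibits the same vertex-imbalance pattern as that of $Lx$ (hence at most two unbalanced vertices, each by $\pm 1$), and—crucially by strict positivity—contains every edge of $\gamma$, so it is strongly, hence weakly, connected. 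The classical Euler-trail existence criterion then produces $e'_{1:L}\in\edgeset_{1:L}(\gamma)$ with $\hat\pi(e'_{1:L})=x'$, establishing $x'\in\hat\pi(\edgeset_{1:L}(\gamma))$. The dichotomy (i)–(ii) is immediate from the equality $(\eta_j\cdot x)_j=(\eta_j\cdot x')_j$: either this vector vanishes and both $x$ and $x'$ correspond to Euler circuits, or its unique $+\tfrac1L$- and $-\tfrac1L$-coordinates designate the common $\mathrm{init}(x)=\mathrm{init}(x')$ and $\mathrm{term}(x)=\mathrm{term}(x')$. Non-emptiness of $\hat\pi(\edgeset_{1:L}(\gamma))\cap\R_{+}^{\edgeset(\gamma)}$ for $L\geq L_0$ is obtained by fixing a vertex $v_0\in\stateset$, concatenating for each edge $e\in\edgeset(\gamma)$ a detour $v_0\to\rvstate(e)\xrightarrow{e}\gamma(e)\to v_0$ (possible by strong connectivity), and padding the resulting walk up to length $L$ by continuing along closed walks at the current endpoint. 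The main technical delicacy is engineering the integer basis $B$ so that the Euler-trail criterion applies cleanly after translation; the surrounding steps are standard linear algebra and directed-graph theory.
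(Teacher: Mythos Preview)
Your argument is correct and follows essentially the same route as the paper: both build $B$ from vectors of the form $\ell_i\chi_{C_j}-\ell_j\chi_{C_i}$ for directed cycles $C_i,C_j$ in $\gamma$, then use that such vectors lie in $\ker\partial$ so that translating by $\tfrac1L b$ preserves the vertex-imbalance pattern, and finally invoke the Euler-trail criterion together with the strict positivity hypothesis (which forces the translated multigraph to contain every edge of the strongly connected $\gamma$, hence to be connected). The only difference is in how the cycle indicators $\chi_{C_0},\ldots,\chi_{C_d}$ are obtained: you appeal to the standard fact that the directed cycle space is spanned by cycle indicators and extract a basis abstractly, whereas the paper constructs them explicitly via an ear decomposition of $\gamma$ (each ear is completed to an elementary circuit, and the resulting circuits are automatically independent because each new ear contributes a fresh edge). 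Your route is marginally shorter; the paper's is more constructive but otherwise identical in spirit.
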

	\begin{proof}
	It is clear that $\hat{\pi}(\edgeset_{1:L}(\gamma))\cap \R^{\edgeset(\gamma)}_{+}$ is not empty for $L$ large enough because it contains the image under $\hat{\pi}$ of some string $e_{1:L}$ having a prefix which uses every edge of $\gamma$ at least once. We denote by $\mathrm{Circ}(\gamma)$ the set of elementary circuits (i.e. directed cycles with no vertex appearing twice as initial vertex of edges) and consider the linear map
	$$\alpha:\ \R^{\mathrm{Circ}(\gamma)}\to \R^{\edgeset(\gamma)}\ ,\quad \ket{C}\mapsto \sum_{e\in\edgeset(C)} \ket{e}.$$
	Moreover we set
	$$D:=\Set{|\edgeset(C)|\ket{C'}-|\edgeset(C')|\ket{C}\in \mathbb{Z}^{\mathrm{Circ}(\gamma)}}{C,C'\in\mathrm{Circ}(\gamma)}.$$
	It is easily seen that for every $d\in D$ and every $x\in \hat{\pi}(\edgeset_{1:L}(\gamma))$ such that $x+\frac1L\alpha(d)\in\R^{\edgeset(\gamma)}_{+}$, we also have $x+\frac1L\alpha(d)\in \hat{\pi}(\edgeset_{1:L}(\gamma))$. Moreover, we saw earlier (in the proof of Thm. \ref{thm:largeconsistentdev}) that, for $x\in\hat{\pi}(\edgeset_{1:L}(\gamma))$, either every $e_{1:L}\in\hat{\pi}^{-1}(x)$ is a cycle or there is a unique initial internal state $\mathrm{init}(x)$ for each such $e_{1:L}$. These properties are clearly invariant under addition of $\frac1L\alpha(d)$ (as long as $x+\frac1L\alpha(d)\in\R^{\edgeset(\gamma)}_{+}$) and in the second case $\mathrm{init}(x+\frac1L\alpha(d))=\mathrm{init}(x)$. Thus all that remains to be done is to find a subset $D_0\subset D$ such that $B:=\alpha(D_0)$ is a linearly independent set of cardinality $\dim \M_\gamma$. To this end, we recall a well-known theorem from graph-combinatorics saying that every strongly-connected graph has an ear decomposition, meaning that $\edgeset(\gamma)$ can be subdivided into the edge sets of a sequence of ``ears'' $\gamma_1,\ldots \gamma_R$. By definition $\gamma_1$ is an elementary circuit of $\gamma$ and, for every $1<r\leq R$, $\gamma_r$ is either a path in $\gamma$ with distinct vertices or an elementary circuit, such that the intersection of the vertex set of  $\gamma_r$ with the vertex set of $\bigcup_{1\leq r'<r} \gamma_{r'}$ contains exactly the first and last vertex of $\gamma_r$ (which coincide in the elementary circuit case). Notice that by associating to every ear $\gamma_r$, $1<r\leq R$ its first edge we obtain a bijection between $\{\gamma_2,\ldots ,\gamma_R\}$ and a subset of $\edgeset(\gamma)$ containing $\mathrm{outdeg}_\gamma(j)-1$ edges for every $j\in\stateset$. This implies that $R=|\edgeset(\gamma)|-|\stateset(\gamma)|+1$. Clearly, we can complete every ear $\gamma_r$ into an elementary circuit $C_r$ by possibly adjoining a simple path from $\bigcup_{1\leq r'<r} \gamma_{r'}$ with distinct vertices (If $\gamma_r$ is already an elementary circuit, we simply set $C_r:=\gamma_r$). Setting now
	$$d_{r}:=|\edgeset(C_{r})|\ket{C_{r+1}}-|\edgeset(C_{r+1})|\ket{C_{r}} \in D\ ,\quad \textnormal{for } 1\leq r< R,$$
	we see that $B:=\{\alpha(d_r)\}_{1\leq r<R}$ is a linearly independent set of cardinality  $|\edgeset(\gamma)|-|\stateset(\gamma)|=\dim \M_\gamma$.
	\end{proof}
	
	We can extend the basis $B$ of $\hat{\M}_{\gamma,0}$ to a basis of $\R^{\edgeset(\gamma)}$ through the linearly independent vectors
	$$n_j(\varphi(q)):= \sum_{\a \in\alphset(\gamma,j)} \varphi(q)^{j,\a} \ket{j,\a}\ ,\quad j\in\stateset.$$
	Due to (\ref{eq:minimalnbhd}) it is clear that there exists $C_0>0$ such that
	\begin{equation}\label{eq:tubularnbhd}
		\hat{\pi}(\edgeset_{1:L}(\gamma))\subset \varphi(q)+\hat{\M}_{\gamma,0} + \frac{C_0}{L} \sum_{j\in\stateset} [-1,1]\cdot  n_j(\varphi(q)).
	\end{equation}
	and we set $\mathcal{N}_L(x):=x+\frac{C_0}L\sum_{j\in\stateset} [-1,1]\cdot  n_j(\varphi(q))$. We define, for every $t\in\mathbb{Z}^{B}$, the half-open parallelotope
	$$P^0_t:=\varphi(q)+\frac1L \sum_{b\in B} t_b b +\frac1L \sum_{b\in B} [0, 1) \cdot b.$$
	and (\ref*{eq:tubularnbhd}) translates to
	$$\hat{\pi}(\edgeset_{1:L}(\gamma))\subset \bigsqcup_{t\in\mathbb{Z}^B} \mathcal{N}_L(P^0_t).$$
	We now define the measures
	$$\hat{\nu}_{L,0}:=\sum_{x\in\hat{\pi}(\edgeset_{1:L}(\gamma))} w(x) \delta_x\quad , \quad  \hat{\nu}_L:=\frac{\hat{\nu}_{L,0}}{\hat{\nu}_{L,0}[\hat{\pi}(\edgeset_{1:L}(\gamma))]}$$
	with
	$$w(x):=\begin{cases}
	1 &, \textnormal{if the elements of $\hat{\pi}^{-1}(x)$ are cycles,}\\
	\pi_{\mathrm{init}(x)}(q) &, \textnormal{otherwise}.
	\end{cases}$$
	Lem. \ref{lem:basis} leads to
	\begin{equation}\label{eq:equalmeasurehat}
		\hat{\nu}_L(\mathcal{N}_L(P^0_t))=\hat{\nu}_L(\mathcal{N}_L(P^0_{t'})),
	\end{equation}
	for any $t,t'\in\mathbb{Z}^B$ such that $\mathcal{N}_L(P^0_t), \mathcal{N}_L(P^0_{t'})\subset\R_+^{\edgeset(\gamma)}$.
	
	\paragraph{\textbf{Step 2:} }
	We are going to define a collection of disjoint partially-open polytopes $\hat{\mathfrak{K}}_L(\hat{\M}_\gamma)$ in $\varphi(q)+\mathcal{N}_L(\hat{\M}_{\gamma,0})$ containing all points of $\hat{\pi}(\edgeset_{1:L}(\gamma))$ that are not in asymptotically vanishing fringe regions of $\mathcal{N}_L(\hat{\M}_\gamma)$. We would then like to map this collection via $\psi_L$ to a collection $\mathfrak{K}_L(\M_\gamma)$ of disjoint polytopes in $T_q\M_\gamma$ having the properties listed in the outline. First of all, to make sure that the elements of $\mathfrak{K}_L(\M_\gamma)$ will still be disjoint, we need to take care that each fibre of the map $\psi_L$ intersects at most one element of the initial collection $\hat{\mathfrak{K}}_L(\hat{\M}_\gamma)$. In fact the fibre $\psi^{-1}(\psi(x))$ is the positive cone spanned by the linearly independent vectors
	$$n_j(x):=\sum_{\a \in\alphset(\gamma,j)} x^{j,\a} \ket{j,\a} ,\ j\in\stateset.$$
	We set
	$$\mathcal{F}_L(x):=\psi^{-1}(\psi(x))\cap \mathcal{N}_L(q+\hat{\M}_{\gamma,0}).$$
	We now denote $L_1:=\lfloor\frac12 L^{\frac14}\rfloor$
	and define the collection $\mathfrak{P}_L$ of half-open parallelotopes of the form
	\begin{equation}\label{eq:parallelotope}
		\varphi(q)+\frac{2 L_1}L \sum_{b\in B} t_b b +\frac1L \sum_{b\in B} \big[-L_1, L_1 \big) \cdot b\ ,\quad t\in\mathbb{Z}^{B}.
	\end{equation}
	These parallelotopes form a decomposition of the affine subspace $\varphi(q)+\hat{\M}_{\gamma,0}$ and each of them is composed of $(2 L_1)^{\dim(\M_\gamma)}$ smaller parallelotopes of the form $P^0_{t'}$.
	For any $\hat{S}\subset\hat{\M}_\gamma$, we define the collections of polytopes
	$$\hat{\mathfrak{K}}_L(\hat{S}):=\set{\mathcal{F}_L(P)\subset \R^{\edgeset(\gamma)}}{P\in \mathfrak{P}_L,\ P\subset \hat{S}}$$
	and the pushed-forward collection
	$$\mathfrak{K}_L(S):=\set{\psi_{L}(\hat{K})\subset T_q\M_\gamma}{\hat{K}\in \hat{\mathfrak{K}}_L(\varphi_L(S))},$$
	for any $S\subset \mathrm{u}_{q,L}(\M_\gamma)\subset T_q\M_\gamma$. By construction, we have $\cup\mathfrak{K}_L(S)\subset S$ and the elements of $\mathfrak{K}_L(S)$ are disjoint.
	\begin{lem}\label{lem:uniformboundary}
		Let $G\subset\M_\gamma$ be a compact subset containing $q$. There is a constant $C > 0$ and $L_0\in\N$ such that for any $K,K'\in \mathfrak{K}_L(\mathrm{u}_{q,L}(G))$ and $L\geq L_0$:
		\begin{itemize}
			\item[(i)] $\mathrm{diam}(K)\leq C L^{-\frac14}$ and in particular we have, for any set $S\subset T_q\M_\gamma$ with $\mathrm{u}_{q,L}(G)\supset S$:
			$$\cup \mathfrak{K}_L(S) \supset  \mathcal{B}^-_{CL^{-1/4}}(S):=\set{v\in S}{d_2(v,T_q\M_\gamma\setminus S)> C L^{-1/4}},$$
			where $d_2$ is the Euclidean metric induced from the ambient $\R^{\edgeset(\gamma)}$.
			\item[(ii)] $\left|\frac{\nu_L(K)}{\nu_L(K')}-1\right|\leq C L^{-\frac14}$.
		\end{itemize}
	\end{lem}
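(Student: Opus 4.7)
I would deduce (i) by direct ambient-diameter bookkeeping and (ii) by reducing to Step~1's equal-measure identity (\ref{eq:equalmeasurehat}), with deviations from exact equality absorbed into an asymptotically vanishing boundary layer.

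For (i), each parallelotope $P\in\mathfrak{P}_L$ has edge vectors of length $2L_1\|b\|/L = O(L^{-3/4})$ and hence ambient diameter $O(L^{-3/4})$. Passing to $\mathcal{F}_L(P)$ only enlarges $P$ by the tube width $O(C_0/L)$ in the directions $n_j(\varphi(q))$, so $\mathrm{diam}_{\R^{\edgeset(\gamma)}}(\mathcal{F}_L(P)) = O(L^{-3/4})$. On the compact set $\varphi(G)$ the smooth map $\psi$ has a uniform Lipschitz constant $C_\psi$, and composing with $\mathrm{u}_{q,L}$ (an $L^{1/2}$-rescaling along $T_q\M_\gamma$) gives $\mathrm{diam}(K)\leq CL^{-1/4}$ with $C$ depending only on $G$. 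The containment claim is then immediate: given $v\in\mathcal{B}^-_{CL^{-1/4}}(S)$, the unique $K\in\mathfrak{K}_L(\mathrm{u}_{q,L}(G))$ containing $v$ has diameter $\leq CL^{-1/4}$, so $K\subset S$ and $K\in\mathfrak{K}_L(S)$.

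For (ii), I first note that $\nu_L(K)=\hat{\nu}_L(\mathcal{F}_L(P))$ whenever $K=\psi_L(\mathcal{F}_L(P))$, since $\psi_L^{-1}(K)=\psi^{-1}(\psi(P))$ and (\ref{eq:tubularnbhd}) confines $\supp(\hat{\nu}_L)$ to $\mathcal{N}_L(\varphi(q)+\hat{\M}_{\gamma,0})$. I would then decompose $P$ into its $(2L_1)^{\dim \M_\gamma}$ small cells $P^0_{t'}$ and isolate those $t'$ at combinatorial distance $\geq 2$ from $\partial T_P$; for such interior $t'$ the tube $\mathcal{N}_L(P^0_{t'})$ sits entirely inside $\mathcal{F}_L(P)$ and contributes, by (\ref{eq:equalmeasurehat}), the common mass $m_L:=\hat{\nu}_L(\mathcal{N}_L(P^0_0))$. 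The remaining single-cell-thick boundary layer contains $O(L_1^{\dim\M_\gamma-1})$ cells, each of mass at most $m_L$, yielding
\begin{equation*}
\hat{\nu}_L(\mathcal{F}_L(P)) = (2L_1)^{\dim\M_\gamma}\, m_L\, \bigl(1+O(L_1^{-1})\bigr)
\end{equation*}
uniformly in $P\subset\varphi(G)$. Taking the ratio for two such $K,K'$ and using $L_1\asymp L^{1/4}$ gives $|\nu_L(K)/\nu_L(K')-1|\leq C L^{-1/4}$.

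The main obstacle is the uniform-in-$P$ handling of the boundary layer: $\mathcal{F}_L(P)$ uses the true $\psi$-fibres (whose directions $n_j(\cdot)$ tilt across $\varphi(G)$) while $\mathcal{N}_L$ is built from the fixed $n_j(\varphi(q))$, so there is an additional tilt discrepancy between $\mathcal{F}_L(P)$ and $\bigsqcup_{t'}\mathcal{N}_L(P^0_{t'})$. A short calculation using $|n_j(p)-n_j(\varphi(q))|=O(L^{-3/4})$ for $p\in P$, combined with the $O(1/L)$ tube width, shows this discrepancy is confined to an $O(L^{-7/4})$-neighbourhood of $\partial P$, which is even finer than the single-cell boundary layer already absorbed. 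Hence the tilt contributes no extra correction, and the $O(L^{-1/4})$ relative error is governed purely by the boundary-cell count.
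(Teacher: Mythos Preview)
Your approach to both parts is essentially the paper's: for (i) the Lipschitz bound on $\psi$ over a compact set combined with the $L^{1/2}$-rescaling, and for (ii) sandwiching $\hat\nu_L(\mathcal{F}_L(P))$ between counts of equal-mass cells $\mathcal{N}_L(P^0_{t'})$ via (\ref{eq:equalmeasurehat}). Part (i) is fine.

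In (ii) your tilt estimate is wrong, and this is precisely the point that determines the boundary-layer thickness. You claim $|n_j(p)-n_j(\varphi(q))|=O(L^{-3/4})$ for $p\in P$, but the lemma is stated for $K\in\mathfrak{K}_L(\mathrm{u}_{q,L}(G))$, so $P$ ranges over all of the \emph{fixed} compact set $\varphi(G)$; hence $|p-\varphi(q)|=O(1)$ and therefore $|n_j(p)-n_j(\varphi(q))|=O(1)$, not $O(L^{-3/4})$. (You seem to be bounding the variation of $n_j$ \emph{within} a single $P$, which is irrelevant since $\mathcal{N}_L$ is built from the fixed directions $n_j(\varphi(q))$.) The correct bookkeeping, as in the paper, is: for $x\in P$ and $x'\in\mathcal{F}_L(x)$ write $x'=x+\sum_j r_j n_j(x)$ with $|r_j|=O(1/L)$, whence $|b^*\cdot(x'-x)|\le m_0/L$ with $m_0:=\lceil C_0'\sum_j\max_{x_0\in\varphi(G)}|b^*\cdot n_j(x_0)|\rceil$. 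So the discrepancy between $\mathcal{F}_L(P)$ and $\mathcal{N}_L(P)$ is confined to an $m_0$-cell layer---a fixed constant depending on $G$, not ``even finer than single-cell''. Replacing your ``combinatorial distance $\ge 2$'' by ``$\ge m_0$'' (equivalently, using the inner/outer parallelotopes of side $2(L_1\mp m_0)/L$) gives
\[
(2(L_1-m_0))^{\dim\M_\gamma}m_L\le \nu_L(K)\le (2(L_1+m_0))^{\dim\M_\gamma}m_L,
\]
from which the $O(L_1^{-1})=O(L^{-1/4})$ ratio bound follows exactly as you wrote.
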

	\begin{proof}
		(i): Let $\hat{K}\in\hat{\mathfrak{K}}_L(\hat{\M}_\gamma)$ such that $K=\psi_{L}(\hat{K})$ and observe that the set $\mathcal{F}_L(\varphi(G))\subset \R^{\edgeset(\gamma)}_+$ is compact and contains $\hat{K}$. We therefore have
		$$\mathrm{diam}(K)\leq \left(\sup_{x\in\mathcal{F}_L(\varphi(G))} \|T_{x} \psi_{L} \|_2\right) \mathrm{diam}(\hat{K}) = L^\frac12 \left(\sup_{x\in\mathcal{F}_L(\varphi(G))} \| T_{x} \psi \|_2\right) \mathrm{diam}(\hat{K})$$
		and the supremum is finite because $\psi$ is continuously differentiable on $\mathcal{F}_L(\varphi(G))$. Moreover $\mathrm{diam}(\hat{K})$ is bounded by $const.\cdot 2L_1/L$ uniformly on $\mathcal{F}_L(\varphi(G))$ and hence there exists $C_1$ (depending only on $G$) such that $\mathrm{diam}(K) \leq C_1 L^{-\frac14}$ showing assertion (i).
		
		For (ii), the idea is to find a lower bound for $\hat{\nu}_L(\hat{K})$ by selecting a subset of small base parallelepipeds $P^0_t$ of $\hat{K}$ such that the union of the corresponding sets $\mathcal{N}_L(P^0_t)$ is contained in $\hat{K}$. An upper bound is derived in a similar way. Denote by $\{b^*\}_{b\in B}\cup \{n_j(\varphi(q))^*\}_{j\in\stateset}$ the dual basis to $B\cup \{n_j(\varphi(q))\}_{j\in\stateset}$. Let $x\in\varphi_L(K)$ and $x'\in\mathcal{F}_L(x)$. It is easy to see that since $\mathcal{F}_L(x)$ is in the compact set $\mathcal{F}_L(\varphi(G))$, there exists a constant $C_0'$ such that $x'=x+\sum_j r_j n_j(x)$ with $|r_j|<C_0'/L$. It then follows that
		$$|b^*\cdot x'|\leq \frac{C_0'}{L}\sum_{j\in\stateset}\max_{x_0\in \varphi(G)}\left|b^*\cdot n_j(x_0)\right|.$$
		and we set
		$$m_0:=\left\lceil C_0'\sum_{j\in\stateset}\max_{\stack{b\in B}{x_0\in \varphi(G)}}\left|b^*\cdot n_j(x_0)\right| \right\rceil.$$
		Assuming that $\hat{K}=\mathcal{F}_L(P)$, for $P\in\mathfrak{P}_L$, we conclude that, for any $L>(2m_0)^4$, i.e. $L_1>m_0$, the sets
		$$\mathcal{N}_L\left( \varphi(q)+\frac{2 L_1}L \sum_{b\in B} t_b b +\frac1L \sum_{b\in B} \big[-L_1\pm m_0, L_1\mp m_0 \big) \cdot b \right)$$
		are a subset resp. superset of $\hat{K}$. Moreover, by choosing $L_0>(2m_0)^4$ big enough, we can assume that these sets are contained in $\R_+^{\edgeset(\gamma)}$ for $L\geq L_0$. This makes (\ref{eq:equalmeasurehat}) applicable to their constituting parallelotopes $\mathcal{N}_L(P^0_t)$ yielding the bounds
		$$(2(L_1- m_0))^{\dim(\M_\gamma)} \hat{\nu}_L(\mathcal{N}(P^0_0)) \leq \nu_L(K)\leq (2(L_1+ m_0))^{\dim(\M_\gamma)} \hat{\nu}_L(\mathcal{N}(P^0_0)).$$
		We can apply the same reasoning to $K'$ and conclude
		\begin{align*}
			|\frac{\nu_L(K)}{\nu_L(K')}-1| &\leq \left|\frac{\nu_L(K)-\nu_L(K')}{\nu_L(K')}\right|\leq \frac{(L_1+m_0)^{\dim(\M_\gamma)}-(L_1-m_0)^{\dim(\M_\gamma)}}{(L_1-m_0)^{\dim(\M_\gamma)}}\\
			&\leq const. \frac{1}{L_1}\leq C_2 L^{-\frac14}
		\end{align*}
		for some constant $C_2>0$ (only depending on $G$). We set $C:=\max\{C_1,C_2\}$.
	\end{proof}
	
	\paragraph{\textbf{Step 3:} }
	
	We are now going to compare $\nu_L$ on the elements of $\mathfrak{K}_L(U)$ with yet another measure $\lambda_L$ defined as follows: We consider the canonical Riemannian volume form $\omega_\mathrm{g}$ on $\M_\gamma$ wrt. some chosen orientation. We can push it forward along $\varphi$ to a volume form on $\hat{\M}_\gamma$. Its value at $\varphi(q)$ is a top-dimensional form $\varphi_*(\omega_\mathrm{g})(\varphi(q))\in\bigwedge^{\dim(\M_\gamma)} T^*_{\varphi(q)}\hat{\M}_\gamma$. This form has the property that, for any positively-oriented $\varphi_*(\mathrm{g}(q))$-orthonormal basis $(\hat{v}_m)_{1\leq m\leq \dim\M_\gamma}$ of $T_{\varphi(q)}\hat{\M}_\gamma$, it satisfies
	$$\varphi_*(\omega_\mathrm{g})(\varphi(q))\cdot (\hat{v}_1\otimes\cdots\otimes \hat{v}_{\dim(\M_\gamma)})=1.$$
	It can be extended to a constant volume form $\hat{\omega}_\mathrm{g}$ (wrt. translation) on the affine manifold $q+\hat{\M}_{\gamma,0}$, i.e. we have
	$$\hat{\omega}_\mathrm{g}(\varphi(q))= \varphi_*(\omega_\mathrm{g})(\varphi(q)).$$
	We now define the measures
	$\hat{\lambda}_0[S]:=\int_S \hat{\omega}_\mathrm{g},$
	for any Borel-set $S\subset q+\hat{\M}_{\gamma,0}$, and $\hat{\lambda}:=\frac{1}{\hat{\lambda}_0[\hat{\M}_\gamma]}\ \hat{\lambda}_0|_{\hat{\M}_\gamma}$
	Note that these measures are translation-invariant by construction. We pull $\hat{\lambda}$ back along $\varphi_L$ to a measure $\lambda_L$ on $\mathrm{u}_{q,L}(\M_\gamma)\subset T_q\M_\gamma$ and extend it by zero to all of $T_q\M_\gamma$.
	\begin{lem}
		For every bounded Borel-set $S\subset T_q\M_\gamma$ we have
		$$\lim_{L\to\infty} L^{\frac12 \dim(\M_\gamma)} \lambda_L[S] = \frac{1}{\hat{\lambda}_0[\hat{\M}_\gamma]}\int_{S} \omega_{\mathrm{g},q}$$
		with the constant form $\omega_{\mathrm{g},q}(v)=\omega_\mathrm{g}(q)\circ(T_v\mathrm{u}_{q,1})^{\otimes \dim(\M_\gamma)}$.
	\end{lem}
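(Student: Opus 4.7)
The plan is a direct change-of-variables computation followed by a routine dominated-convergence argument; no deep step is involved. Since $\M_\gamma$ is open around $q$ in the affine subspace it spans and $S$ is bounded (so $\bar S$ is compact), the rescaled preimage $\mathrm{u}_{q,L}^{-1}(\bar S)=q+L^{-1/2}\bar S$ lies inside $\M_\gamma$ for all sufficiently large $L$, and $\varphi_L=\varphi\circ\mathrm{u}_{q,L}^{-1}$ is a smooth embedding on $\bar S$. By the very definition of $\lambda_L$ as the pullback of $\hat{\lambda}$ along $\varphi_L$,
\begin{equation*}
\lambda_L[S]=\frac{1}{\hat{\lambda}_0[\hat{\M}_\gamma]}\int_{S}\varphi_L^{\,*}\hat{\omega}_\mathrm{g}.
\end{equation*}

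The key calculation is to expand this pullback. Because $\M_\gamma\subset\R^{\edgeset(\gamma)}$ is cut out by linear equations, all tangent spaces $T_{q'}\M_\gamma$ identify canonically with $T_q\M_\gamma$; under this identification $T_y\mathrm{u}_{q,L}^{-1}$ is simply scalar multiplication by $L^{-1/2}$, so $T_y\varphi_L=L^{-1/2}\,T_{q+L^{-1/2}y}\varphi$ and
\begin{equation*}
L^{\frac12\dim(\M_\gamma)}\,\varphi_L^{\,*}\hat{\omega}_\mathrm{g}(y)=\hat{\omega}_\mathrm{g}\bigl(\varphi_L(y)\bigr)\circ\bigl(T_{q+L^{-1/2}y}\varphi\bigr)^{\otimes\dim(\M_\gamma)}.
\end{equation*}
Now $\hat{\omega}_\mathrm{g}$ is translation-constant on $\varphi(q)+\hat{\M}_{\gamma,0}$ with value $\varphi_{*}\omega_\mathrm{g}(\varphi(q))$, and by the defining property of the pushforward $\varphi_{*}\omega_\mathrm{g}(\varphi(q))\circ(T_q\varphi)^{\otimes\dim(\M_\gamma)}=\omega_\mathrm{g}(q)$. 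Letting $L\to\infty$ (so $q+L^{-1/2}y\to q$) the right-hand side therefore converges pointwise in $y$ to $\omega_\mathrm{g}(q)$, which, since $\omega_{\mathrm{g},q}$ is constant on the affine manifold $T_q\M_\gamma$, equals $\omega_{\mathrm{g},q}(y)$.

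To conclude, I would invoke continuity of $T\varphi$ in a neighbourhood of $q$ to upgrade the above to uniform convergence on the compact set $\bar S$; in particular the integrand is uniformly bounded there. Dominated convergence (or just uniform convergence of continuous integrands on a bounded Borel set) then yields
\begin{equation*}
\lim_{L\to\infty}L^{\frac12\dim(\M_\gamma)}\lambda_L[S]=\frac{1}{\hat{\lambda}_0[\hat{\M}_\gamma]}\int_{S}\omega_{\mathrm{g},q}.
\end{equation*}
The only conceptual point worth highlighting is that the linear ambient structure of $\R^{\edgeset(\gamma)}$ makes all tangent-space identifications canonical and reduces $T\mathrm{u}_{q,L}^{-1}$ to a scalar; everything else is the standard observation that an affine rescaling by $L^{1/2}$ produces the Jacobian factor $L^{-\dim(\M_\gamma)/2}$ when applied to a smoothly varying top-form whose value at $q$ is frozen into $\omega_{\mathrm{g},q}$.
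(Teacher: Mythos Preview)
Your proof is correct and follows essentially the same route as the paper's own argument: write $\lambda_L[S]$ as the integral of $\varphi_L^{\,*}\hat{\omega}_\mathrm{g}$, factor out the $L^{-\dim(\M_\gamma)/2}$ coming from $T\mathrm{u}_{q,L}^{-1}$, and then pass the limit through the integral using smoothness on the compact set $\bar S$. The only cosmetic difference is that you invoke the translation-constancy of $\hat{\omega}_\mathrm{g}$ explicitly, whereas the paper carries $\varphi^{*}\hat{\omega}_\mathrm{g}(\mathrm{u}_{q,L}^{-1}(v))$ through and lets it converge to $\varphi^{*}\hat{\omega}_\mathrm{g}(q)=\omega_\mathrm{g}(q)$.
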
\label{lem:volumeformlimit}
	\begin{proof}
		Observe that, for $L$ large enough, $S\subset \mathrm{u}_{q,L}(\M_\gamma)$ and
		$\lambda_L[S]=\frac{1}{\hat{\lambda}_0[\hat{\M}_\gamma]}\int_{S} \varphi_L^*(\hat{\omega}_\mathrm{g}).$
		We compute
		\begin{align*}
			\lim_{L\to\infty} L^{\frac12 \dim(\M_\gamma)} \lambda_L[S] &= 
			\lim_{L\to\infty} \frac{L^{\frac12 \dim(\M_\gamma)}}{\hat{\lambda}_0[\hat{\M}_\gamma]} \int_{v\in S} \varphi^*(\hat{\omega}_\mathrm{g})(\mathrm{u}_{q,L}^{-1}(v))\circ(T_v\mathrm{u}_{q,L})^{\otimes \dim(\M_\gamma)}\\
			&= 
			\lim_{L\to\infty} \frac{1}{\hat{\lambda}_0[\hat{\M}_\gamma]} \int_{v\in S} \varphi^*(\hat{\omega}_\mathrm{g})(\mathrm{u}_{q,L}^{-1}(v))\circ(T_v\mathrm{u}_{q,1})^{\otimes \dim(\M_\gamma)}.
		\end{align*}
		The form in the integrand is smooth on the compact $\bar{S}$ and hence we can exchange the order of limit and integration, yielding:
		\begin{align*}
		\lim_{L\to\infty} L^{\frac12 \dim(\M_\gamma)} \lambda_L[S] &= 
		\frac{1}{\hat{\lambda}_0[\hat{\M}_\gamma]} \int_{v\in S} \varphi^*(\hat{\omega}_\mathrm{g})(q)\circ(T_v\mathrm{u}_{q,1})^{\otimes \dim(\M_\gamma)}\\
		&= 
		\frac{1}{\hat{\lambda}_0[\hat{\M}_\gamma]} \int_{v\in S} \omega_\mathrm{g}(q)\circ(T_v\mathrm{u}_{q,1})^{\otimes \dim(\M_\gamma)}
		\end{align*}
	\end{proof}
	
	The measures $\lambda_L$ and $\mu_L$ can be compared on the elements of $\mathfrak{K}_L(\M_\gamma)$ according to the following
	\begin{lem}\label{lem:measurecomparison}
		Let $G\subset T_q\M_\gamma$ be a compact subset containing $q$. For every $\varepsilon>0$ there exists $L_0\in\N$ such that for every $L\geq L_0$ and $K\in\mathfrak{K}_L(\mathrm{u}_{q,L}(G))$
		$$\left|\frac{\nu_L(K)}{\lambda_L(K)}-1 \right|<\varepsilon.$$
	\end{lem}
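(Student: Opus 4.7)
The plan is to compare $\nu_L(K)$ and $\lambda_L(K)$ by decomposing each $K$ into a large number of identical base cells, and exploiting that both measures are, up to uniform $o(1)$ corrections, translation-invariant with respect to the lattice $\tfrac{1}{L}\mathbb{Z}\cdot B \subset \hat{\M}_{\gamma,0}$. Once this is established the ratio becomes essentially independent of $K$, and matching the total masses (both are probability measures) then forces it to tend to $1$.

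\textbf{The two measures on a cell.} Fix $K \in \mathfrak{K}_L(\mathrm{u}_{q,L}(G))$ and write $K = \psi_L(\mathcal{F}_L(P))$ for the unique $P \in \mathfrak{P}_L$, decomposed as $P = \bigsqcup_{P^0_t \subset P} P^0_t$ into $(2L_1)^{\dim(\M_\gamma)}$ base parallelepipeds. Since $\varphi_L$ is a right inverse of $\psi_L$ on $\varphi(q)+\hat{\M}_{\gamma,0}$, we have $\varphi_L(K) = P$, and translation-invariance of the constant form $\hat{\omega}_\mathrm{g}$ gives exactly
$$\lambda_L(K) = \hat{\lambda}(P) = (2L_1)^{\dim(\M_\gamma)}\,\hat{\lambda}(P^0_0).$$
For $\nu_L(K)$, the support of $\hat{\nu}_L$ lies in the disjoint union $\bigsqcup_t \mathcal{N}_L(P^0_t)$ by (\ref{eq:tubularnbhd}), and for $L$ large enough (using compactness of $G$ and the fact that $q$ lies in the open face $\M_\gamma$) all slabs indexed by $P^0_t \subset P$ are contained in $\R^{\edgeset(\gamma)}_+$. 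Then (\ref{eq:equalmeasurehat}) makes $\hat{\nu}_L(\mathcal{N}_L(P^0_t))$ constant in $t$, and an alignment argument (see obstacle below) yields
$$\nu_L(K) = (2L_1)^{\dim(\M_\gamma)}\,\hat{\nu}_L(\mathcal{N}_L(P^0_0))\,(1+o(1))$$
uniformly in $K$.

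\textbf{Matching normalizations.} Combining,
$$\frac{\nu_L(K)}{\lambda_L(K)} = \frac{\hat{\nu}_L(\mathcal{N}_L(P^0_0))}{\hat{\lambda}(P^0_0)}\,(1+o(1)),$$
an expression independent of $K$. Both $\hat{\nu}_L$ and $\hat{\lambda}$ are probability measures, so by (\ref{eq:equalmeasurehat}) and translation-invariance of $\hat{\omega}_\mathrm{g}$ respectively, $\hat{\nu}_L(\mathcal{N}_L(P^0_0)) = 1/N^\nu_L$ and $\hat{\lambda}(P^0_0) = 1/N^\lambda_L$, where $N^\nu_L$ counts the admissible indices $t$ (i.e.\ those with $\mathcal{N}_L(P^0_t) \subset \R^{\edgeset(\gamma)}_+$) and $N^\lambda_L = \hat{\lambda}_0(\hat{\M}_\gamma)/\hat{\lambda}_0(P^0_0)$. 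Both quantities approximate the ratio of the $\hat{\omega}_\mathrm{g}$-volume of $\hat{\M}_\gamma$ against the $\hat{\omega}_\mathrm{g}$-volume of $P^0_0$, with a boundary-layer discrepancy of order $L^{\dim(\M_\gamma)-1}$ against a bulk of order $L^{\dim(\M_\gamma)}$; hence $N^\nu_L/N^\lambda_L = 1 + O(1/L)$ and the claim follows.

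\textbf{Main obstacle.} The delicate step is the alignment argument needed to pass from the sum $\sum_{P^0_t \subset P}\hat{\nu}_L(\mathcal{N}_L(P^0_t))$ to $\nu_L(K) = \hat{\nu}_L(\mathcal{F}_L(P))$, because the ``fiber slab'' $\mathcal{F}_L(P)$ is curved in the $n_j(x)$-directions, whereas the disjoint normal-slab union $\bigsqcup_{P^0_t \subset P}\mathcal{N}_L(P^0_t)$ uses the fixed directions $n_j(\varphi(q))$. Using $\mathrm{diam}(\hat{K}) = O(L^{-1/4})$ inherited from Lem.\ \ref{lem:uniformboundary} together with $C^1$-smoothness of $x \mapsto n_j(x)$ on the compact set $\mathcal{F}_L(\varphi(G))$, one estimates $\|n_j(x) - n_j(\varphi(q))\| = O(L^{-1/4})$ on $\mathcal{F}_L(P)$. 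Set against the lattice spacing $1/L$ in the $n_j$-direction, this places each support point of $\hat{\nu}_L$ uniquely inside the normal slab of its own $P^0_t$ except for a boundary sub-lattice of relative size $O(L^{-1/4})$, producing the uniform $o(1)$-factor claimed above.
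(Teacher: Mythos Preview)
Your overall strategy coincides with the paper's: both arguments reduce the comparison to the ratio of single base-cell masses and then pin this ratio to $1$ by a global mass-matching, using that $\hat{\nu}_L$ and $\hat{\lambda}$ are probability measures together with a boundary-layer count of order $L^{\dim\M_\gamma-1}$ against a bulk of order $L^{\dim\M_\gamma}$. The paper packages this slightly differently—it first proves the ``fringe'' result $\nu_L[\cup\mathfrak{K}_L(\mathrm{u}_{q,L}(\M_\gamma))]\to 1$ and $\lambda_L[\cup\mathfrak{K}_L(\mathrm{u}_{q,L}(\M_\gamma))]\to 1$, then combines these with the cell-ratio bounds from Lem.~\ref{lem:uniformboundary}(ii) via the algebraic identity
\[
\frac{\nu_L[K]}{\lambda_L[K]}=\frac{\sum_{K'}\lambda_L[K']/\lambda_L[K]}{\sum_{K'}\nu_L[K']/\nu_L[K]}\cdot\frac{\nu_L[\cup\mathfrak{K}_L]}{\lambda_L[\cup\mathfrak{K}_L]},
\]
whereas you spell out the cell count $\nu_L(K)\approx(2L_1)^{\dim\M_\gamma}\hat{\nu}_L(\mathcal{N}_L(P^0_0))$ and $\lambda_L(K)=(2L_1)^{\dim\M_\gamma}\hat{\lambda}(P^0_0)$ directly. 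These are the same idea.

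There is, however, a concrete slip in your alignment argument. The estimate $\|n_j(x)-n_j(\varphi(q))\|=O(L^{-1/4})$ on $\mathcal{F}_L(P)$ is false: for $P\subset\varphi(G)$ with $G$ a fixed compact set in $\M_\gamma$, points $x\in\mathcal{F}_L(P)$ satisfy $|x-\varphi(q)|=O(1)$, so $\|n_j(x)-n_j(\varphi(q))\|=O(1)$ as well. (Also, $\mathrm{diam}(\hat{K})$ in the ambient space is $O(L^{-3/4})$, not $O(L^{-1/4})$; the $L^{-1/4}$ in Lem.~\ref{lem:uniformboundary}(i) refers to $K\subset T_q\M_\gamma$ after the rescaling $\mathrm{u}_{q,L}$.) The correct mechanism—exactly the one in the proof of Lem.~\ref{lem:uniformboundary}(ii)—is that the fibre slab has thickness $O(1/L)$, so the $B$-projection of any fibre point differs from its base by at most a \emph{fixed constant} $m_0$ of base cells $P^0_t$; since each $P$ has $2L_1\sim L^{1/4}$ base cells per side, the fraction of mismatched cells is $O(m_0/L_1)=O(L^{-1/4})$. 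Your final $(1+o(1))$ factor is right, but the route you sketch to it is not; you can simply cite the bounds $(2(L_1\pm m_0))^{\dim\M_\gamma}\hat{\nu}_L(\mathcal{N}_L(P^0_0))$ already established in the proof of Lem.~\ref{lem:uniformboundary}(ii).
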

	\begin{proof}
		We begin by showing
		\begin{equation}\label{eq:fringemeasure}
			\lim_{L\to\infty} \nu_L[\cup\mathfrak{K}_L(\mathrm{u}_{q,L}(\M_\gamma))]=1=\lim_{L\to\infty} \lambda_L[\cup\mathfrak{K}_L(\mathrm{u}_{q,L}(\M_\gamma))].
		\end{equation}
		For this we observe that
		$$\cup\hat{\mathfrak{K}}_L(\hat{\M}_\gamma)= \mathcal{F}_L(\bar{\hat{\M}}_\gamma) \setminus \bigcup_{\stack{t\in\mathbb{Z}^B:}{P^0_t\cap\partial \hat{\M}_\gamma\neq \varnothing}} \mathcal{F}_L(P^0_t\cap\bar{\hat{\M}}_\gamma),$$
		where $\partial \hat{\M}_\gamma:=\bar{\hat{\M}}_\gamma\setminus \hat{\M}_\gamma$. As in the proof of Lem. \ref*{lem:uniformboundary}, it can be shown that there is a constant $C'>0$ such that for the above boundary parallelotopes $P^0_t$: $$\hat{\nu}_L[\mathcal{F}_L(P^0_t\cap\bar{\hat{\M}}_\gamma)]\leq (1+C')\hat{\nu}_L[\hat{K}_0],\ \text{for any } L\in\N,$$
		where $\hat{K}_0=\mathcal{F}_L(\varphi(q)+\frac1L \sum_{b\in B} [-L_1, L_1 ))$. On the other hand we have according to Lem. \ref{lem:uniformboundary}.(ii):
		$$1\geq \hat{\nu}_L[\cup\hat{K}_L(\varphi(G))]\geq (1-CL^{-1/4}) \hat{\nu}_L[\hat{K}_0] \mathrm{card}\set{t\in\mathbb{Z}^B}{P^0_t\subset\varphi(G)}.$$
		In total we get
		$$\hat{\nu}_L[\cup\hat{\mathfrak{K}}_L(\hat{\M}_\gamma)]\geq 1-  \frac{1+C'}{1-CL^{-1/4}}\frac{\mathrm{card}\set{t\in\mathbb{Z}^B}{P^0_t \cap\partial\hat{\M}_\gamma\neq\varnothing}}{\mathrm{card}\set{t\in\mathbb{Z}^B}{P^0_t\subset\varphi(G)}}.$$
		The second fraction in this expression converges to $0$ due to the well-known fact from geometric measure theory that the numerator grows like $\mathcal{O}(L^{\dim(\M_\gamma)-1})$ (because $\partial\hat{\M}_\gamma$ has box-counting-dimension $\dim(\M_\gamma)-1$, being contained in a union of smooth codimension 1 submanifolds) and the denominator grows like $\mathcal{O}(L^{\dim(\M_\gamma)})$. This proves the left-hand limit of (\ref*{eq:fringemeasure}). The right-hand limit follows in an analogous fashion exploiting the translation invariance of $\hat{\lambda}$ and deducing
		$$\hat{\lambda}[\cup\hat{\mathfrak{K}}_L(\hat{\M}_\gamma)]\geq 1- \frac{\mathrm{card}\set{t\in\mathbb{Z}^B}{P^0_t \cap\partial\hat{\M}_\gamma\neq\varnothing}}{\mathrm{card}\set{t\in\mathbb{Z}^B}{P^0_t\subset\hat{\M}_\gamma}}.$$
		We can now finish the proof of the lemma by
		\begin{align*}
			\frac{\nu_L[K]}{\lambda_L[K]} &=\frac{\frac{1}{\lambda_L[K]} \lambda_L[\cup\mathfrak{K}_L(\mathrm{u}_L(\M_\gamma))]}{\frac{1}{\nu_L[K]} \nu_L[\cup\mathfrak{K}_L(\mathrm{u}_L(\M_\gamma))]} \cdot\frac{ \nu_L[\cup\mathfrak{K}_L(\mathrm{u}_L(\M_\gamma))]}{ \lambda_L[\cup\mathfrak{K}_L(\mathrm{u}_L(\M_\gamma))]}\\
			&=\frac{\sum_{K'\in \mathfrak{K}_L(\mathrm{u}_L(\M_\gamma))}\frac{\lambda_L[K']}{\lambda_L[K]}}{\sum_{K'\in \mathfrak{K}_L(\mathrm{u}_L(\M_\gamma))}\frac{\nu_L[K']}{\nu_L[K]}} \cdot\frac{ \nu_L[\cup\mathfrak{K}_L(\mathrm{u}_L(\M_\gamma))]}{ \lambda_L[\cup\mathfrak{K}_L(\mathrm{u}_L(\M_\gamma))]}
		\end{align*}
		which implies
		$$\frac{1}{1+CL^{-\frac14}} \cdot\frac{ \nu_L[\cup\mathfrak{K}_L(\mathrm{u}_L(\M_\gamma))]}{ \lambda_L[\cup\mathfrak{K}_L(\mathrm{u}_L(\M_\gamma))]}\leq\frac{\nu_L[K]}{\lambda_L[K]}\leq \frac{1}{1-CL^{-\frac14}} \cdot\frac{ \nu_L[\cup\mathfrak{K}_L(\mathrm{u}_L(\M_\gamma))]}{ \lambda_L[\cup\mathfrak{K}_L(\mathrm{u}_L(\M_\gamma))]}.$$
		Both bounds converge to $1$ independently of $K$ according to (\ref*{eq:fringemeasure}).
	\end{proof}
	
	\paragraph{\textbf{Step 4:} }
	
	It is now time to relate the measures $\mu$ and $\mu_L$ using the auxiliary measures $\lambda_L$ and $\nu_L$. Let $G\subset\M_\gamma$ be a compact neighbourhood of $q$. Then
	\begin{equation}\label{eq:nbhdtoq}
		\lim_{L\to\infty}\sup_{q'\in \mathrm{u}_{q,L}^{-1}(U)} d_2(q',q)=0
	\end{equation}
	implies that $\mathrm{u}_{q,L}^{-1}(U)\subset G$, for $L$ large enough. Furthermore this implies,
	due to Lem. \ref{lem:preciselargedeviation}, that we can write
	$$\mu_L[U] = \frac{\hat{\nu}_{L,0}(\R^{\edgeset(\gamma)})}{L} \int_{\mathcal{F}_L(\varphi_L(U))} \hat{\kappa}_0(x)\hat{\kappa}_{1,L}(x)\e^{\sum_{j\in\stateset}r_{L}^j(x)\sum_{\a \in\alphset(\gamma,j)} \psi(x)^{j,\a}\log\frac{\psi(x)^{j,\a}}{q^{j,\a}}} \e^{-L\mathrm{h}_\gamma(\psi(x)\parallel q)}\hat{\nu}_L(dx).$$
	Note that some of the functions in the integrand are only defined on $\hat{\pi}(\edgeset_{1:L}(\gamma))$ but the integral is still well-defined since $\hat{\nu}_L$ vanishes outside of this discrete set.
	Lem. \ref{lem:preciselargedeviation} moreover says that the sequence of functions $r_L^j(x)=L(x^j-\pi_j(\psi(x)))$ is uniformly bounded on the domain of integration. Together with (\ref*{eq:nbhdtoq}) we may conclude that the first exponential also converges uniformly to $1$.  Furthermore $\hat{\kappa}_0$ is continuous and hence, again by (\ref*{eq:nbhdtoq}) and the fact that $|x-\varphi(\psi(x))|\leq const.\cdot 1/L$, we have the uniform convergence  $$\lim_{L\to\infty}\sup_{x\in\mathcal{F}_L(\varphi_L(U_L))\cap\hat{\pi}(\edgeset_{1:L}(\gamma))}|\hat{\kappa}_0(x)-\hat{\kappa}_0(\varphi(q))|=0.$$
	Moreover, $\hat{\kappa}_{1,L}(x)$ converges uniformly to $1$. Together, these uniform convergence results imply that we have: For every $\varepsilon_1>0$ there exists $L_1\in\N$ such that for $L\geq L_1$:
	\begin{align*}
		\mu_L[U]  &\geq (1- \varepsilon_1)\cdot \frac{\hat{\nu}_{L,0}(\R^{\edgeset(\gamma)})\kappa_0(\varphi(q))}{L} \int_{U} \e^{-L\mathrm{h}_\gamma(\mathrm{u}_{q,L}^{-1}(v) \parallel q)} \nu_L(d v)\\
		&\geq (1- \varepsilon_1)\cdot \frac{\hat{\nu}_{L,0}(\R^{\edgeset(\gamma)})\kappa_0(\varphi(q))}{L}\sum_{K\in \mathfrak{K}_L(U)} \int_{K} \e^{-L\mathrm{h}_\gamma(\mathrm{u}_{q,L}^{-1}(v) \parallel q)} \nu_L(d v)\\
		&\geq (1- \varepsilon_1)\cdot \frac{\hat{\nu}_{L,0}(\R^{\edgeset(\gamma)})\kappa_0(\varphi(q))}{L}\sum_{K\in \mathfrak{K}_L(U)} \inf_{v\in K}\e^{-L\mathrm{h}_\gamma(\mathrm{u}_{q,L}^{-1}(v) \parallel q)} \nu_L(K).
	\end{align*}
	 Application of Lem. \ref{lem:measurecomparison} yields that for every $\varepsilon_2>0$ there exists $L_2\in\N$ such that for $L\geq L_2$:
	$$\mu_L[U] \geq (1- \varepsilon_2)\cdot \frac{\hat{\nu}_{L,0}(\R^{\edgeset(\gamma)})\kappa_0(\varphi(q))}{L}\sum_{K\in \mathfrak{K}_L(U)} \inf_{v\in K}\e^{-L\mathrm{h}_\gamma(\mathrm{u}_{q,L}^{-1}(v) \parallel q)} \lambda_L(K).$$
	The exponential in this expression is continuously differentiable on the compact set $\bar{U}$. Together with Lem. \ref{lem:uniformboundary}.(i), this turns the above sum into a Riemann-sum and hence, for every $\varepsilon_3>0$, there exists $L_3\in\N$ such that for $L\geq L_3$:
	$$\mu_L[U] \geq (1- \varepsilon_3)\cdot \frac{\hat{\nu}_{L,0}(\R^{\edgeset(\gamma)})\kappa_0(\varphi(q))}{L}\int_{\mathfrak{K}_L(U)} \e^{-L\mathrm{h}_\gamma(\mathrm{u}_{q,L}^{-1}(v) \parallel q)} \lambda_L(dv).$$
	On the compact set $\bar{U}$, the exponential converges uniformly to $\e^{-\frac12\mathrm{g}\cdot v^{\otimes 2}}$, enabling us to lower-bound the above integral. Additionally we can apply Lem. \ref{lem:uniformboundary}.(i) again to restrict the domain of integration. Together we obtain that, for every $\varepsilon_4>0$, there exists $L_4\in\N$ such that for $L\geq L_4$:
	\begin{align*}
		\mu_L[U] &\geq (1- \varepsilon_4)\cdot \frac{\hat{\nu}_{L,0}(\R^{\edgeset(\gamma)})\kappa_0(\varphi(q))}{L}\int_{\mathcal{B}^-_{CL^{-1/4}}(U)} \e^{-\frac12\mathrm{g}\cdot v^{\otimes 2}} \lambda_L(dv)\\
		&=(1- \varepsilon_4)\cdot \frac{\hat{\nu}_{L,0}(\R^{\edgeset(\gamma)})\kappa_0(\varphi(q))}{\hat{\lambda}_0(\hat{\M}_\gamma) L^{1+\frac{1}{2}\dim(\M_\gamma)}}\int_{U}\chi_{\mathcal{B}^-_{CL^{-1/4}}(U)} \e^{-\frac12\mathrm{g}\cdot v^{\otimes 2}}\  L^{\frac{1}{2}\dim(\M_\gamma)}\lambda_L(dv).
	\end{align*}
	Due to Lem. \ref{lem:volumeformlimit} and dominated convergence we can deduce that for every $\varepsilon_5>0$ there exists $L_5\in\N$ such that for $L\geq L_5$:
	$$\mu_L[U]\geq (1- \varepsilon_5)\cdot \underbrace{\frac{\hat{\nu}_{L,0}(\R^{\edgeset(\gamma)})\kappa_0(\varphi(q))}{\hat{\lambda}_0(\hat{\M}_\gamma) L^{1+\frac{1}{2}\dim(\M_\gamma)}}}_{=:C_L} \underbrace{\int_{v\in U} \e^{-\frac12\mathrm{g}\cdot v^{\otimes 2}} \omega_{\mathrm{g},q}(v)}_{=\mu[U]}.$$
	We can conclude that
	$$\liminf_{L\to\infty} \frac{1}{C_L}\mu_L[U]\geq \mu[U]$$
	and by Lem. \ref{lem:wlog} this means that the sequence of measures $\frac{1}{C_L}\mu_L$ converges weakly to $\mu$. In particular, since $T_q\M_\gamma$ is a continuity set of $\mu$, we have
	$$1=\mu[T_q\M_\gamma]=\lim_{L\to\infty} \frac{1}{C_L}\mu_L[T_q\M_\gamma]=\lim_{L\to\infty}\frac{1}{C_L}.$$
	This finally implies
	$$\liminf_{L\to\infty} \mu_L[U]= \liminf_{L\to\infty} \frac{1}{C_L}\mu_L[U]\geq \mu[U]$$
	and, again by by Lem. \ref{lem:wlog}, $\mu_L$ converges weakly to $\mu$.
	\qed

	\section{Proof of Thm. \ref*{thm:graddescent}}\label{subsec:proofgraddescent}
	
	Let $q\in\mathcal{C}$. We begin by proving a slightly generalized dominated convergence lemma:
	\begin{lem}\label{lem:domconv}
		Let $(f_L)$ be a sequence of functions on $T_q\M_\gamma$ having a pointwise limit. Moreover assume that there exists a non-negative function $F$ with $|f_L|\leq F$ for any $L\in\N$ and that $\Lambda[F]$ as well as the $\Lambda[f_L]$ are finite. Then we have
		$$\lim_{L\to\infty} \left\langle f_L \right\rangle_{(\mathrm{u}_{q,L})_*\Pr_{\mathrm{Q}^{L}}(q)}= \Lambda[\lim_{L\to\infty} f_L].$$
	\end{lem}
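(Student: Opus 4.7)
\medskip

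The plan is to combine the weak convergence $\mu_L := (\mathrm{u}_{q,L})_*\Pr_{\mathrm{Q}^{L}}(q) \to \mu$ established in Thm \ref{thm:centrallimitthm} with uniform tail bounds on $\{\mu_L\}_L$ extracted from Thm \ref{thm:largeconsistentdev}, organised in a three-step scheme that plays standard dominated-convergence moves against the varying measures $\mu_L$.

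First I would establish uniform sub-Gaussian tail control for $\mu_L$. The large deviation formula $\Pr_{\mathrm{Q}^L}(q)[q_L] = \kappa_L(q_L)\,\e^{-L\mathrm{h}_\gamma(q_L\parallel q)}$ with $L^{-1}\log\kappa_L \to 0$, combined with the Hessian identity $\mathrm{h}_\gamma(\cdot\parallel q)(q+\varepsilon w) = \tfrac12\varepsilon^2\mathrm{g}(q)\cdot w^{\otimes 2} + O(\varepsilon^3)$ from Prop \ref{prop:hessianmetricjet}, produces (in rescaled coordinates $v = L^{1/2}(q'-q)$) constants $R_0,c,C>0$ with
\[
  \mu_L\bigl[\{v\in T_q\M_\gamma : \|v\|_\mathrm{g}\ge R\}\bigr] \le C\e^{-cR^2}
\]
for all $L$ large enough and all $R\ge R_0$; far-field contributions ($\|v\|_\mathrm{g}\gtrsim L^{1/2}$) are absorbed into an $\e^{-cL}$ estimate using the coarser lower bound on $\mathrm{h}_\gamma$ away from $q$. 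Together with $\Lambda[F]<\infty$, this yields uniform integrability of $F$ against the family, i.e., $\lim_{M\to\infty}\sup_L\int_{\{F>M\}}F\,\d\mu_L = 0$.

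Next I would truncate. Setting $f_L^{(M)} := (-M)\vee f_L\wedge M$ and $f^{(M)} := (-M)\vee (\lim_L f_L)\wedge M$, one has $|f_L-f_L^{(M)}|\le F\mathbf{1}_{\{F>M\}}$, so by Step 1 and dominated convergence for $\mu$,
\[
  \Bigl|\int f_L\,\d\mu_L - \int f_L^{(M)}\,\d\mu_L\Bigr| + \Bigl|\Lambda[\lim_L f_L] - \Lambda[f^{(M)}]\Bigr| \;\xrightarrow{M\to\infty}\; 0
\]
uniformly in $L$. It therefore suffices to prove $\int f_L^{(M)}\,\d\mu_L \to \int f^{(M)}\,\d\mu$ for each fixed $M$. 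For this I would fix $\eta>0$, use the tail bound from Step 1 together with weak convergence to pick a compact $\mu$-continuity set $K'$ (whose boundary carries no $\mu$-mass, possible because $\mu$ is absolutely continuous with respect to Lebesgue measure) with $\sup_L \mu_L(K'^c)\le\eta$ and, via Egorov's theorem applied to the pointwise convergence $f_L^{(M)}\to f^{(M)}$, arrange $f_L^{(M)}\to f^{(M)}$ uniformly on $K'$; the contribution on $T_q\M_\gamma\setminus K'$ is bounded by $2M\eta$ uniformly in $L$. On $K'$, uniform convergence plus $\mu_L(K')\to\mu(K')$ (Portmanteau) reduces matters to $\int_{K'} f^{(M)}\,\d\mu_L \to \int_{K'} f^{(M)}\,\d\mu$.

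The main obstacle is this last convergence, since $f^{(M)}$ need not be $\mu$-a.e. continuous; the pointwise limit of measurable functions can be quite irregular. The remedy is to exploit the finer structure produced by the proof of Thm \ref{thm:centrallimitthm}: the measures $\mu_L$ on $K'$ are compared to $\lambda_L$ (discretised Riemannian volume) via the uniform bounds of Lem \ref{lem:measurecomparison}, which show that for bounded Borel $g$ on a compact continuity set $K'$ one has $\int_{K'} g\,\d\mu_L = (1+o(1))\int_{K'} g\,\rho_L\,\d\mu$ for densities $\rho_L$ tending uniformly to $1$ on $K'$. Applied to $g = f^{(M)}$ this yields $\int_{K'} f^{(M)}\,\d\mu_L \to \int_{K'} f^{(M)}\,\d\mu$ without requiring continuity of $f^{(M)}$, and combining all estimates and letting first $L\to\infty$, then $\eta\to 0$, then $M\to\infty$ produces the claim.
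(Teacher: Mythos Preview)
Your route differs substantially from the paper's and has two genuine gaps.

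First, the uniform-integrability claim in Step~1 does not follow from your ingredients. A uniform sub-Gaussian tail bound $\mu_L[\{\|v\|_{\mathrm g}\ge R\}]\le C\e^{-cR^2}$ together with $\Lambda[F]<\infty$ does \emph{not} give $\sup_L\int_{\{F>M\}}F\,\d\mu_L\to 0$: the lemma places no growth hypothesis on $F$, so nothing forbids $F\sim\e^{c'\|v\|_{\mathrm g}^2}$ with $c'$ just below the Gaussian exponent of $\mu$ but above the uniform constant $c$ you extract for the family $(\mu_L)$, in which case $\int F\,\d\mu_L$ need not even be finite. The tail bound controls $\mu_L$-mass of shells, not $\mu_L$-integrals of $F$ over $\{F>M\}$. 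Second, the density $\rho_L$ in your final step does not exist. Each $\mu_L$ is supported on the finite set $\mathrm{u}_{q,L}(\bar{\M}_\gamma^L)$ and is therefore singular with respect to the absolutely continuous Gaussian $\mu$. Lemma~\ref{lem:measurecomparison} only compares $\nu_L(K)$ with $\lambda_L(K)$ on the parallelotope cells $K\in\mathfrak{K}_L$, and the conversion of such cell-wise bounds into integral bounds in the proof of Thm~\ref{thm:centrallimitthm} goes through a Riemann-sum argument that explicitly uses continuity of the integrand. For an arbitrary bounded Borel $g=f^{(M)}$ this route is closed---which is precisely the obstacle you flagged and then did not actually overcome.

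For contrast, the paper bypasses all of this with a short sandwich: apply ordinary dominated convergence for the fixed Gaussian functional $\Lambda$ to the envelopes $g_{L_0}^-:=\inf_{L'\ge L_0}f_{L'}$ and $g_{L_0}^+:=\sup_{L'\ge L_0}f_{L'}$ (both dominated by $F$), so that $\Lambda[g_{L_0}^\pm]\to\Lambda[\lim_L f_L]$ as $L_0\to\infty$; then invoke Thm~\ref{thm:centrallimitthm} on the \emph{fixed} functions $g_{L_0}^\pm$ to obtain $\Lambda[g_{L_0}^-]\le\liminf_L\langle f_L\rangle_{\mu_L}\le\limsup_L\langle f_L\rangle_{\mu_L}\le\Lambda[g_{L_0}^+]$, and let $L_0\to\infty$. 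No tail estimates, truncation, or Egorov are used.
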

	\begin{proof}
		By dominated convergence, $\Lambda[\lim_{L\to\infty} f_L]$ is finite and also
		$$\lim_{L_0\to\infty}\Lambda[\inf_{L'\geq L_0}f_{L'}]=\Lambda[\lim_{L\to\infty} f_L]=\lim_{L_0\to\infty}\Lambda[\sup_{L'\geq L_0}f_{L'}].$$
		On the other hand, the functions $\inf_{L'\geq L_0}f_{L'}$ and $\sup_{L'\geq L_0}f_{L'}$ are $(\mathrm{u}_{q,L})_*\Pr_{\mathrm{Q}^L}(q)$-integrable for any $L\in\N$ and Thm. \ref{thm:centrallimitthm} implies
		\begin{align*}
		\Lambda[\inf_{L'\geq L_0}f_{L'}]&=\liminf_{L\to\infty} \langle  \inf_{L'\geq L_0}f_{L'} \rangle_{(\mathrm{u}_{q,L})_*\Pr_{\mathrm{Q}^L}(q)}\leq\liminf_{L\to\infty} \langle f_L \rangle_{(\mathrm{u}_{q,L})_*\Pr_{\mathrm{Q}^L}(q)}, \\
		\Lambda[\sup_{L'\geq L_0}f_{L'}]&=\limsup_{L\to\infty} \langle \sup_{L'\geq L_0}f_{L'} \rangle_{(\mathrm{u}_{q,L})_*\Pr_{\mathrm{Q}^L}(q)}\geq \limsup_{L\to\infty} \langle f_L \rangle_{(\mathrm{u}_{q,L})_*\Pr_{\mathrm{Q}^L}(q)}.
		\end{align*}
		Letting $L_0\to\infty$ we can conclude
		$$\limsup_{L\to\infty} \langle f_L \rangle_{(\mathrm{u}_{q,L})_*\Pr_{\mathrm{Q}^L}(q)}\leq \Lambda[\lim_{L\to\infty} f_L]\leq \liminf_{L\to\infty} \langle f_L \rangle_{(\mathrm{u}_{q,L})_*\Pr_{\mathrm{Q}^L}(q)}.$$
	\end{proof}
	We set $U_{v,L}:=\set{v'\in \mathrm{u}_{q,L}(\bar{\M}_\gamma\cap\M_\stateset) }{\Phi\circ\Pr_{\rvalphra}(q+L^{-\frac12}v')<\Phi\circ\Pr_{\rvalphra}(q+L^{-\frac12}v)}$
	and note that, as a consequence of the implicit function theorem, we have pointwise convergence $\chi_{U_{v,L}}\to \chi_{U_v}$ with $U_v:=\set{v'\in T_q\M_\gamma}{\d (\Phi\circ\Pr_{\rvalphra})(q)\cdot v'< \d (\Phi\circ\Pr_{\rvalphra})(q) \cdot v}.$
	We can write, for any $v\in\mathrm{u}_{q,L}(\bar{\M}^{(L)}_\gamma)$:
	$$R^{L}_{N,\Phi}[q+L^{-\frac12} v |q] = N\ (\mathrm{u}_{q,L})_*\Pr_{\mathrm{Q}^{L}}(q)[v]\ \Big( \left( (\mathrm{u}_{q,L})_*\Pr_{\mathrm{Q}^{L}}(q)[U_{v,L}]\right)^{N-1} + \varepsilon_L(v)\Big),$$
	where the term $0\leq \varepsilon_L(v)\leq const.\cdot (\mathrm{u}_{q,L})_*\Pr_{\mathrm{Q}^{L}}(q)[\partial U_{v,L}]$ accounts for the possibility that two offspring maximize $\Phi\circ\Pr_{\rvalphra}$ simultaneously. Denoting by $\bar{U}_{v,L}^0$ the enclosed volume between $\partial U_{v,L}$ and $\partial U_v$, we have $\chi_{\bar{U}_{v,L}^0}\to\chi_{\partial U_v}$ pointwise and a first application of Lem. \ref{lem:domconv} yields
	$$0\leq \lim_{L\to\infty} \varepsilon_L(v)\leq const.\cdot \lim_{L\to\infty}\langle \chi_{\bar{U}_{v,L}^0}\rangle_{(\mathrm{u}_{q,L})_*\Pr_{\mathrm{Q}^{L}}(q)}=\Lambda[\chi_{\partial U_v}]=0$$
	We now chose a $\mathrm{g}$-orthonormal basis $(v_m)_{1\leq m\leq \dim(\M_\gamma)}$ of $T_q\M_\gamma$ with $v_1=\frac{\nabla_\mathrm{g}\Phi(q)}{\| \nabla_\mathrm{g}\Phi(q) \|_\mathrm{g}}$ and expand
	\begin{align*}
		&\left\langle q' \right\rangle_{R^{L}_{N,\Phi}[q' |q]} = q + L^{-\frac12} \sum_{m=1}^{\dim(\M_\gamma)} v_m \sum_{v\in\mathrm{u}_{q,L}(\bar{\M}^{L}_\gamma)} (v_m^\flat\cdot v)\ R^{L}_{N,\Phi}[q+L^{-\frac12} v |q]\\
		&= q + L^{-\frac12} \sum_{m=1}^{\dim(\M_\gamma)} v_m \left\langle N (v_m^\flat\cdot v) \left(
		\left\langle \chi_{U_{v,L}} \right\rangle_{(\mathrm{u}_{q,L})_*\Pr_{\mathrm{Q}^L}(q)} \right)^{N-1} + \varepsilon_L(v) \right\rangle_{(\mathrm{u}_{q,L})_*\Pr_{\mathrm{Q}^L}(q)[v]}.
	\end{align*}
	Due to Lem. \ref{lem:domconv}, the term in the outer averaging brackets converges to $N (v_m^\flat\cdot v) 
	\Lambda[\chi_{U_v}]^{N-1}$. Another application of Lem. \ref{lem:domconv} readily yields 
	$$\left\langle q' \right\rangle_{R^{L}_{N,\Phi}[q' |q]} =q + L^{-\frac12} \sum_{m=1}^{\dim(\M_\gamma)} v_m \Lambda\left[v\mapsto N (v_m^\flat\cdot v) 
	\Lambda[\chi_{U_v}]^{N-1}  \right]$$
	It is straightforward to compute
	$$\Lambda[\chi_{U_v}]=\frac{1}{\sqrt{2\pi}}\int_{-\infty}^{v_1^\flat\cdot v} \e^{-\frac{1}{2}x^2} \d x = \frac12 (1+\mathrm{erf}(\frac{1}{\sqrt{2}} v_1^\flat\cdot v))$$
	and hence
	$$\Lambda\left[v\mapsto N (v_m^\flat\cdot v) 
	\Lambda[\chi_{U_v}]^{N-1}  \right] =
	\begin{cases}
	0 & \textnormal{if } m>1,\\
	\alpha(N) & \textnormal{if } m=1.
	\end{cases}$$
	The recursion (\ref{eq:expectationiteration}) now becomes:
	\begin{equation}\label{eq:iteration}
	q^L(t_{n+1})=q^L(t_n)+L^{-\frac12} X(q^L(t_n)) + o(L^{-\frac12}),
	\end{equation}
	with $X(q)=\alpha(N)\frac{\nabla_{\mathrm{g}}(\Phi\circ\Pr_{\rvalphra})(q)}{\|\nabla_{\mathrm{g}}(\Phi\circ\Pr_{\rvalphra})(q)\|_\mathrm{g}}$. It remains to be verified that the hereby defined sequence of paths $(q^L)$ indeed converges uniformly to an integral curve of $X$. This is fairly standard: First one derives from (\ref{eq:iteration}) that there exists $C_1>0$ such that for any $t\in (t_n,t_{n+1})$: $\|\dot{q}^L(t)\|_2\leq C_1$ and therefore $\|q^L(t)-q^L(t_n)\|_2\leq C_1 \tau_L$. This together with the Lipschitz-continuity of $X$ (which holds since $\Phi\circ\Pr_{\rvalphra}$ is continuously differentiable) leads to
	$$\|\dot{q}^L(t)-X(q^L(t))\|_2=\|\frac{L^{-\frac12}}{\tau_L}X(q^L(t_n))+o(1)-X(q^L(t))\|_2\leq const.\cdot \tau_L + o(1)=o(1).$$
	Thus there exists a sequence $(r_L)$ of non-negative real numbers converging to $0$ such that for any $T>0$ and $t\in[0,T]$ we have
	\begin{equation}\label{eq:uniquelimit}
	\|q^L(t)-q^L(0)-\int_{s=0}^{t} X(q^L(s)) \d s \|_2 \leq r_L T.
	\end{equation}
	Now let $(L_b)_{b\in\N}$ be any strictly increasing sequence of positive integers and set
	$$b(m):=\min\set{k\in\N}{\forall L\geq L_k:\ r_{L}\leq \frac{1}{2^m}}.$$
	We claim that the subsequence $(q^{L_{b(m)}})_{m\in\N}$ converges uniformly on $[0,T]$ to an integral curve of the vector field $X$. To show this, we use (\ref*{eq:uniquelimit}) and the triangle-inequality yielding
	$$\|q^{L_{b(m+1)}}(t)-q^{L_{b(m)}}(t)\|_2 \leq \frac{3T}{2}\frac{1}{2^m}+ C_2 \int_{s=0}^{t} \|q^{L_{b(m+1)}}(s)-q^{L_{b(m)}}(s)\| \d s,$$
	where $C_2$ is a Lipschitz-constant for $X$.
	Gronwall's lemma now gives us
	$$\sup_{t\in[0,T]} \|q^{L_{b(m+1)}}(t)-q^{L_{b(m)}}(t)\|_2 \leq \frac{3T}{2}\frac{1}{2^m} \e^{C_{2} T}.$$
	Hence, the sequence $(q^{L_{b(m)}})_{m\in\N}$ of continuous functions  converges uniformly on $[0,T]$ to the continuous limit
	$$q(t):=\lim_{m\to\infty}q^{L_{b(m)}}(t)=q^{L_{b(1)}}+\sum_{l=1}^{\infty} \left(q^{L_{b(l+1)}}(t)-q^{L_{b(l)}}(t)\right)$$
	because the sum therein converges normally. We consider (\ref*{eq:uniquelimit}) with $L=L_{b(m)}$ and, since $X$ is continuous, we can pass to the limit $m\to\infty$ yielding
	$$q(t)-q(0)-\int_{s=0}^t X(q(s))\d s = 0,$$
	which proves the claim by verifying that indeed $q(\cdot)$ is an integral curve of $X$. In fact we showed the following: For every subsequence of $(q^L)$ there exists a subsubsequence which converges uniformly on $[0,T]$ to the unique integral curve of $X$ with initial point $q(0)$. This implies that also the sequence $(q^L)$ itself must converge uniformly to the same limit.
	\qed
	
	\addtocontents{toc}{\protect\setcounter{tocdepth}{1}}

	\bibliography{mybib}{}
	\bibliographystyle{plain}

\end{document}